\def\loc{\operatorname{loc}}
\definecolor{citation}{rgb}{0.11,0.67,0.84}
\definecolor{formula}{rgb}{0.1,0.2,0.6}
\definecolor{url}{rgb}{0.11,0.67,0.84}
\newcommand{\reqnomode}{\tagsleft@false}
\def\dx{\,{\rm d}x}
\def\dist{\,{\rm dist}}
\def\diam{\,{\rm diam}}
\DeclareRobustCommand*{\bfseries}{%
  \not@math@alphabet\bfseries\mathbf
  \fontseries\bfdefault\selectfont
  \boldmath
}
\DeclareMathOperator*{\osc}{osc}
\newlength{\defbaselineskip}
\newcommand{\setlinespacing}[1]
           {\setlength{\baselineskip}{#1 \defbaselineskip}}
\newtheorem{theorem}{Theorem}
\newtheorem{corollary}{Corollary}[section]
\newtheorem{definition}{Definition}
\newtheorem{remark}{Remark}[section]
\newtheorem{lemma}{Lemma}[section]
\newtheorem{proposition}{Proposition}[section]
\numberwithin{equation}{section}
\newcommand{\kk}{\kappa}
\newcommand{\ti}[1]{\tilde{#1}}
\newcommand{\mf}[1]{\mathfrak{#1}}
\newcommand\eps\varepsilon
\def\eqn#1$$#2$${\begin{equation}\label#1#2\end{equation}}
\newcommand{\be}{\begin{equation}}
\newcommand{\ee}{\end{equation}}
\newcommand{\rr}{\varrho}
\newcommand{\snr}[1]{\lvert #1\rvert}
\newcommand{\nr}[1]{\lVert #1 \rVert}
\newcommand{\uu}{\mathfrak{u}}
\newcommand{\N}{\mathbb{N}}
\def\name[#1, #2]{#1 #2}
\title[Free transmission problems]{Fully nonlinear free transmission problems with nonhomogeneous degeneracies}
\author[De Filippis]{Cristiana De Filippis}  \address{Cristiana De Filippis\\Dipartimento di Matematica "Giuseppe Peano", Universit\`a di Torino\\ Via Carlo Alberto 10, 10123 Torino, Italy} \email{\url{cristiana.defilippis@unito.it}}
\begin{document}

\subjclass[2020]{35A01, 35B65, 35J60, 35J70, 35R35 \vspace{1mm}} 

\keywords{Fully nonlinear degenerate equations, Double Phase problems, Free transmission problems\vspace{1mm}}

\thanks{{\it Acknowledgements.}\ This work is supported by the University of Turin via the project "Regolarit\'a e propriet\'a qualitative delle soluzioni di equazioni alle derivate parziali".
\vspace{1mm}}

\maketitle

\begin{abstract}
We prove existence and regularity results for free transmission problems governed by fully nonlinear elliptic equations with nonhomogeneous degeneracies.
 \end{abstract}
\vspace{3mm}
{\small \tableofcontents}

\setlinespacing{1.08}
\section{Introduction}\label{si}
In this paper we provide existence and regularity results for the free transmission problem
\begin{flalign}\label{eq}
\left[\snr{Du}^{p^{+}\mathds{1}_{\{u>0\}}+p_{-}\mathds{1}_{\{u<0\}}}+a(x)\mathds{1}_{\{u>0\}}\snr{Du}^{q}+b(x)\mathds{1}_{\{u<0\}}\snr{Du}^{s}\right]F(D^{2}u)=f(x)\qquad \mbox{in} \ \ \Omega,
\end{flalign}
that models anisotropic diffusion processes characterized by multiple degeneracy phenomena. In fact, the degeneracy law displayed in \eqref{eq} develops discontinuities along $\partial\{x\in \Omega\colon u(x)>0\}$ and $\partial\{x\in \Omega\colon u(x)<0\}$, and it is also influenced by the possible vanishing of the coefficients $a(\cdot)$, $b(\cdot)$.  The various regions where each degeneracy regime is in force are in part unknown a priori as they vary according to the sign of solutions and the transmission interface can be interpreted as a free boundary, but there is also a nonhomogeneous degeneracy variation corresponding to the zero sets of the modulating coefficients $\{x\in \Omega\colon a(x)=0\}$ and $\{x\in \Omega\colon b(x)=0\}$. Transmission problems are essentially related to the analysis of models involving different constitutive laws holding in separate subregions of the domain. The systematic study of such problems started with \cite{pi} and since then it has undergone an intensive development, see \cite{amte,css,hprs,ilsi,ol,ps,sc} and references therein. We mention in particular \cite{hprs}, where the authors consider the degenerate free transmission problem
\begin{eqnarray}\label{90}
\snr{Du}^{\theta_{1}\mathds{1}_{\{u>0\}}+\theta_{2}\mathds{1}_{\{u<0\}}}F(D^{2}u)=f(x)\qquad \mbox{in} \ \ \Omega,
\end{eqnarray}
where $\theta_{1},\theta_{2}$ are nonnegative constants and prove existence and optimal H\"older continuity for the gradient of solutions to the associated Dirichlet problem. Notice that the degeneracy law appearing in \eqref{90} is close to be homogeneous, in the sense that for any fixed point it behaves as a power. Another way of interpreting equation \eqref{90} is as an instance of fully nonlinear elliptic equation with discontinuous variable exponent 
\begin{eqnarray}\label{91}
\snr{Du}^{p(x)}F(D^{2}u)=f(x)\qquad \mbox{in} \ \ \Omega,
\end{eqnarray}
whose regularity theory is treated for the case of continuous exponents in \cite{bprt}. In sharp contrast with the models described so far, equation \eqref{eq} features a strongly anisotropic structure in which several nonhomogeneous phases coexist and switch according to the sign of solutions. Precisely, in correspondence of positive values of $u$ (resp. negative value of $u$) we see the nonhomogeneous degeneracy $[\snr{Du}^{p^{+}}+a(x)\snr{Du}^{q}]$ (resp. $[\snr{Du}^{p_{-}}+b(x)\snr{Du}^{s}]$). Degeneracies of Double Phase type have been introduced in \cite{de1}, where it is investigated the H\"older continuity of the gradient of solutions to fully nonlinear elliptic equations as
\begin{eqnarray}\label{92}
[\snr{Du}^{p}+a(x)\snr{Du}^{q}]F(D^{2}u)=f(x)\qquad \mbox{in} \ \ \Omega,
\end{eqnarray}
where $0\le p\le q$, $0\le a(\cdot)\in C(\Omega)$ and $f\in C(\Omega)\cap L^{\infty}(\Omega)$. This new model received lots of attention recently in the setting of free boundary problems, nonhomogeneous $\infty$-laplacian equations or obstacle problems, cf. \cite{bz1,bz2,bz}; while in \cite{frz} the authors carefully combine the approaches of \cite{bprt,de1} to derive local $C^{1,\alpha_{0}}$-regularity for viscosity solution of the fully nonlinear equation with variable exponents and nonhomogeneous degeneracy
\begin{eqnarray}\label{93}
[\snr{Du}^{p(x)}+a(x)\snr{Du}^{q(x)}]F(D^{2}u)=f(x)\qquad \mbox{in} \ \ \Omega.
\end{eqnarray}
These results also cover Multi-Phase equations with variable exponents, that are a generalization of \eqref{91}, \eqref{92} and \eqref{93}:
\begin{eqnarray}\label{94}
\left[\snr{Du}^{p(x)}+\sum_{\iota=1}^{\kk}a_{\iota}(x)\snr{Du}^{q_{\iota}(x)}\right]F(D^{2}u)=f(x)\qquad \mbox{in} \ \ \Omega.
\end{eqnarray}
In this framework, we introduce a new model for anisotropic free transmission problems which is essentially based on the alternance (according to the positivity of solutions) of degeneracies of type \eqref{92}, consider a Dirichlet problem governed by \eqref{eq} and prove that at least a solution exists. This is the content of
\begin{theorem}\label{t1}
Let $\Omega\subset \mathbb{R}^{n}$ be an open, bounded domain satisfying the uniform exterior sphere condition, assume \texttt{set} and let $g\in C(\partial \Omega)$. Then there exists a viscosity solution $u\in C(\bar{\Omega})$ to Dirichlet problem
\begin{flalign}\label{pd}
\begin{cases}
\ \left[\snr{Du}^{p_{u}(x)}+a(x)\mathds{1}_{\{u>0\}}\snr{Du}^{q}+b(x)\mathds{1}_{\{u<0\}}\snr{Du}^{s}\right]F(D^{2}u)=f(x)\quad &\mbox{in} \ \ \Omega\\
\ u=g\quad &\mbox{on} \ \ \partial \Omega,
\end{cases}
\end{flalign}
where it is $p_{u}(x):=p^{+}\mathds{1}_{\{u>0\}}(x)+p_{-}\mathds{1}_{\{u<0\}}(x).$
\end{theorem}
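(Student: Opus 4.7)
The plan is to follow a Perron-type construction, adapted to the discontinuous structure of the nonlinearity in \eqref{pd}, along the lines of the argument used in \cite{hprs} for the simpler model \eqref{90}. Since the prefactor in \eqref{pd} depends on the sign of $u$ via $\mathds{1}_{\{u>0\}}$ and $\mathds{1}_{\{u<0\}}$, the operator is discontinuous along the a priori unknown interface $\{u=0\}$, and viscosity sub- and supersolutions must therefore be defined through the upper/lower semicontinuous envelopes of the operator in the $u$-variable. Concretely, if a smooth $\varphi$ touches $u$ from above at $x_0$ with $\varphi(x_0)>0$ (resp.\ $<0$), the test inequality is the one attached to the $\{u>0\}$ (resp.\ $\{u<0\}$) regime; when $\varphi(x_0)=0$ the subsolution test uses the larger of the two admissible nonlinearities and the supersolution test the smaller.

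The first step is to produce an ordered pair $\underline u\le\bar u$ of continuous global sub- and supersolutions that attain the datum $g$ on $\partial\Omega$. Using the uniform exterior sphere condition, at every $x_0\in\partial\Omega$ one anchors an exterior ball $B_{R}(y_0)$ and builds a radial barrier of the form $w(x)=C(R^{-\alpha}-\snr{x-y_{0}}^{-\alpha})$ with $\alpha,C$ large. On radial profiles both the degeneracy prefactor and $F(D^{2}w)$ can be evaluated explicitly, so the parameters may be tuned to beat $\nr{f}_{L^{\infty}}$ and make $w$ a classical strict super/subsolution in a neighborhood of $x_0$ independently of which sign regime is active. Shifting by $g(x_0)$ and gluing the local barriers via suitable envelopes delivers $\underline u,\bar u\in C(\bar\Omega)$ with $\underline u=\bar u=g$ on $\partial\Omega$.

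With this pair at hand, define the Perron candidate
\[
u(x):=\sup\{\,v(x)\,:\,v\ \text{is a viscosity subsolution of \eqref{pd} with}\ \underline u\le v\le\bar u\,\}.
\]
Standard Ishii–Perron lemmas, adapted to the envelope viscosity framework, show that the upper semicontinuous envelope $u^{*}$ is a subsolution of \eqref{pd}, while the usual bump-perturbation argument gives that the lower semicontinuous envelope $u_{*}$ is a supersolution. The barriers squeeze $u^{*}=u_{*}=g$ on $\partial\Omega$, so $u\in C(\bar\Omega)$ and $u$ is the sought viscosity solution.

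The crux is verifying Perron stability across the free interface $\{u=0\}$: the bump-perturbation has to absorb simultaneously the jump of the exponent $p_{u}(x)$ between $p^{+}$ and $p_{-}$ and the switch between the modulating terms $a(x)\snr{Du}^{q}$ and $b(x)\snr{Du}^{s}$. The exponent jump is treated as in \cite{hprs}; the additional $a(\cdot),b(\cdot)$-terms enter only as a continuous, nonnegative multiplicative factor in front of $F(D^{2}u)$, and so, once the envelopes of the nonlinearity are correctly computed at $t=0$, they preserve the monotonicity in $D^{2}u$ that makes the envelope viscosity notion compatible with the sup/inf operations of Perron. Once that stability check is carried out, the remainder is a routine adaptation of the classical argument.
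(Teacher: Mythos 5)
Your plan attempts a direct Perron construction on the original problem \eqref{pd}, but there are two structural obstructions that the paper's proof is specifically engineered to sidestep and that your argument does not address.

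First, \eqref{pd} is not a fixed PDE: the prefactor contains $\mathds{1}_{\{u>0\}}$ and $\mathds{1}_{\{u<0\}}$, so the operator each competitor $v$ is tested against depends on $v$'s own sign. This makes the Perron class $\{v : v \text{ subsolution of } \eqref{pd},\ \underline u\le v\le\bar u\}$ ill-posed in the usual sense: taking a pointwise supremum of subsolutions of \emph{different} equations (each one determined by the competitor's own positivity set) does not produce a subsolution of any single equation, and the bump-perturbation argument for the supersolution property of $u_*$ requires comparing $u_*$ and its perturbation against the \emph{same} nonlinearity near the contact point — which is precisely what fails along $\{u=0\}$. The paper's Definition \tref{def3} deliberately separates the roles: the switch is driven by a \emph{fixed} reference function $v\in C(\bar\Omega)$ (equation \eqref{d.1}/\eqref{pdv}), producing a family of genuine PDEs $u\mapsto u_v$, and Theorem \tref{t1} is proved by iterating this map $v\mapsto u_v$ and extracting a fixed point via the uniform H\"older compactness of Proposition \tref{prophol}. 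Your proposal has no analogue of this decoupling-and-iteration step, and I do not see how to make "Perron on the self-referential equation" rigorous without it.

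Second, even for the decoupled problem \eqref{pdv} with $v$ fixed, a comparison principle is not available as stated: the degeneracy prefactor vanishes when $Du=0$ (and when $p_v=0$ and $a\mathds{1}_{\{v>0\}}=b\mathds{1}_{\{v<0\}}=0$ it is identically $1$ but there is still no strict monotonicity in $u$), and the indicator functions are discontinuous. The paper only proves comparison (Lemma \tref{comp}) for the \emph{regularized} family \eqref{eqe}, where three modifications enter simultaneously: a proper term $\varepsilon u_\varepsilon$ forcing strict monotonicity in $u$; the replacement $|Du|\mapsto\ell_\varepsilon(Du)$ bounding the prefactor away from zero; and the mollification $\chi^\pm_{\varepsilon;v}$ of the indicators together with the lower bound $a_{\varepsilon;v},b_{\varepsilon;v}\ge\varepsilon$ restoring continuity. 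Perron is then applied only at this regularized level (Corollary \tref{c1}), after which one passes to the limit $\varepsilon\to 0$ using stability and the $\varepsilon$-uniform H\"older estimates — comparison is never invoked for the limiting discontinuous equation. Your barrier construction and your appeal to "standard Ishii–Perron lemmas adapted to the envelope framework" implicitly presuppose a comparison principle for \eqref{pd} itself, which is precisely the missing ingredient. The paragraph you flag as "the crux" — Perron stability across $\{u=0\}$ — is indeed where the argument breaks, and "a routine adaptation of the classical argument" does not cover it.

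To repair the proposal along the paper's lines you would need to: (a) decouple the switch from the unknown by fixing $v$; (b) regularize \eqref{pdv} into a proper, continuous equation for which a comparison principle can be proven; (c) run Perron there; (d) obtain $\varepsilon$-uniform H\"older estimates (this is exactly the role of Appendix \tref{vem}, and the uniformity in the moduli of continuity of the regularized coefficients is essential); (e) pass $\varepsilon\to 0$ to get $u_v$; and finally (f) iterate $v\mapsto u_v$ and use the same compactness to extract a solution of the free transmission problem.
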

To prove Theorem \ref{t1} we first approximate a regularized variant of equation \eqref{pd}$_{1}$, obtained by relating the switch of degeneracies to the positivity of an arbitrary, globally continuous function, with a family of fully nonlinear Multi-Phase equations with continuous variable exponents cf. \eqref{94}, and prove local H\"older continuity estimates that are uniform with respect to the parameter of approximation and to the moduli of continuity of the variable exponents and of the coefficients, see Appendix \ref{vem}. Then we establish a comparison principle for the approximating Dirichlet problems, construct continuous supersolutions/subsolutions and and design a recursive procedure that will ultimately produce a solution to problem \eqref{pd} via Perron theorem. Once the matter of existence of solutions to \eqref{pd} has been settled, we turn to regularity. In this perspective, we have
\begin{theorem}\label{t2}
Assume \texttt{set} and let $u\in C(\Omega)$ be a viscosity solution of equation \eqref{eq}. Then there exists $\alpha_{0}\equiv \alpha_{0}(n,\lambda,\Lambda,p^{+},p_{-})\in (0,1)$ so that $u\in C^{1,\alpha_{0}}_{\loc}(\Omega)$. In particular, whenever $\Omega'\Subset \Omega$ is an open set it holds that
\begin{eqnarray}\label{60}
[Du]_{0,\alpha_{0};\Omega'}\le c(\texttt{data},\nr{u}_{L^{\infty}(\Omega)},\nr{f}_{L^{\infty}(\Omega)},\dist(\Omega',\partial\Omega)).
\end{eqnarray}
\end{theorem}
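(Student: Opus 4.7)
The plan is to bring the free-transmission equation \eqref{eq} into the framework of the Multi-Phase equations \eqref{94} with variable exponents, and then to exploit the moduli-of-continuity-independent local $C^{1,\alpha_{0}}$ estimate proved in Appendix \ref{vem}. The key observation is that, $u$ being continuous, the sets $\{u>0\}$ and $\{u<0\}$ are open, so upon setting
\[
p_{u}(x):=p^{+}\mathds{1}_{\{u>0\}}(x)+p_{-}\mathds{1}_{\{u<0\}}(x),\quad \ti{a}(x):=a(x)\mathds{1}_{\{u>0\}}(x),\quad \ti{b}(x):=b(x)\mathds{1}_{\{u<0\}}(x),
\]
one may read \eqref{eq} as the Multi-Phase equation $[\snr{Du}^{p_{u}(x)}+\ti{a}(x)\snr{Du}^{q}+\ti{b}(x)\snr{Du}^{s}]F(D^{2}u)=f(x)$, for which the exponent $p_{u}$ and the coefficients $\ti a,\ti b$ are only bounded/measurable.

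Fix $x_{0}\in \Omega'\Subset \Omega$ and $r\in (0,\dist(\Omega',\del\Omega)/2)$. Approximate $\mathds{1}_{\{u>0\}}$ and $\mathds{1}_{\{u<0\}}$ by $\varepsilon$-regularizations $\theta_{\varepsilon}^{\pm}\in C(\bar{\Omega})$, e.g.\ $\theta_{\varepsilon}^{\pm}(x):=\min\{\varepsilon^{-1}(u(x))_{\pm},1\}$, so that $\theta_{\varepsilon}^{+}+\theta_{\varepsilon}^{-}\leq 1$ and $\theta_{\varepsilon}^{\pm}\uparrow \mathds{1}_{\{\pm u>0\}}$ as $\varepsilon\downarrow 0$. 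Set $p_{\varepsilon}:=p^{+}\theta_{\varepsilon}^{+}+p_{-}\theta_{\varepsilon}^{-}$, $a_{\varepsilon}:=a\theta_{\varepsilon}^{+}$, $b_{\varepsilon}:=b\theta_{\varepsilon}^{-}$, and, relying on the existence machinery underlying Theorem \ref{t1}, solve the Dirichlet problem
\[
\begin{cases}
[\snr{Du_{\varepsilon}}^{p_{\varepsilon}(x)}+a_{\varepsilon}(x)\snr{Du_{\varepsilon}}^{q}+b_{\varepsilon}(x)\snr{Du_{\varepsilon}}^{s}]F(D^{2}u_{\varepsilon})=f(x)&\mbox{in }B_{r}(x_{0}),\\
u_{\varepsilon}=u&\mbox{on }\del B_{r}(x_{0}).
\end{cases}
\]
Since the bound produced by Appendix \ref{vem} for solutions of equations of type \eqref{94} depends on the continuous data $p_{\varepsilon},a_{\varepsilon},b_{\varepsilon}$ only through their $L^{\infty}$ norms and not through their moduli of continuity, it delivers
\[
[Du_{\varepsilon}]_{0,\alpha_{0};B_{r/2}(x_{0})}\leq c(\data,\nr{u}_{L^{\infty}(\Omega)},\nr{f}_{L^{\infty}(\Omega)})
\]
with $c$ independent of $\varepsilon$.

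Arzelà-Ascoli then produces a subsequence with $u_{\varepsilon}\to \bar u$ in $C^{1}_{\loc}(B_{r}(x_{0}))$. Since the approximating data $\theta_{\varepsilon}^{\pm}$ are \emph{frozen} by $u$ and converge pointwise to the free-transmission indicators of $u$, the stability of viscosity solutions identifies $\bar u$ as a viscosity solution of \eqref{eq} in $B_{r}(x_{0})$ with boundary values $u$ on $\del B_{r}(x_{0})$. A comparison principle for \eqref{eq}, analogous to the one exploited in the Perron step of Theorem \ref{t1}, finally forces $\bar u\equiv u$; the $C^{1,\alpha_{0}}$-bound then transfers to $u$ and a standard covering over $\Omega'$ yields \eqref{60}. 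The main obstacle is precisely this last identification: across the free boundaries $\del\{u>0\}$ and $\del\{u<0\}$ the nonlinearity of \eqref{eq} is genuinely discontinuous, so the stability of viscosity solutions has to be paired with a sufficiently robust comparison principle (to be designed by exploiting the non-negativity of $a,b$, the strict monotonicity in $D^{2}u$ granted by the ellipticity of $F$, and the monotone convergence $\theta_{\varepsilon}^{\pm}\to \mathds{1}_{\{\pm u>0\}}$) in order to rule out any spurious transmission discrepancy in the limit.
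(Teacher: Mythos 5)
Your proposal has two genuine gaps, both serious.

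First, you attribute to Appendix \ref{vem} a moduli-of-continuity-independent local $C^{1,\alpha_{0}}$ estimate. But Proposition \ref{prophol} in the Appendix only establishes \emph{$C^{0,\beta_{0}}$} (H\"older) estimates for all $\beta_{0}\in(0,1)$, not estimates on $[Du]_{0,\alpha_{0}}$. There is no gradient H\"older estimate for Multi-Phase equations with variable exponents proved in this paper that is uniform in the moduli of continuity of $p(\cdot),a(\cdot),b(\cdot)$ — that is precisely the nontrivial content the paper builds from scratch in Section \ref{ghgh}. The estimate $[Du_{\varepsilon}]_{0,\alpha_{0};B_{r/2}(x_{0})}\le c$ in your display is exactly the conclusion you need, invoked without proof; citing the Appendix does not deliver it.

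Second, even granting the uniform $C^{1,\alpha_{0}}$ bound on $u_{\varepsilon}$, your identification step $\bar u\equiv u$ is unsupported. The only comparison principle the paper proves is Lemma \ref{comp}, and that is for the regularized equation \eqref{eqe}, which contains the proper term $\varepsilon u_{\varepsilon}$ (providing the strict monotonicity used in the contradiction $\ge\varepsilon\omega_{0}$) as well as smoothed, strictly positive coefficients and exponents bounded away from zero. No comparison principle is established (or claimed) for \eqref{eq} itself, nor for \eqref{pdv}; indeed, across $\partial\{u>0\}$ and $\partial\{u<0\}$ the nonlinearity is genuinely discontinuous, and the paper's Perron construction in Theorem \ref{t1} gives existence but not uniqueness. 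Without uniqueness, $u_{\varepsilon}\to\bar u$ only produces \emph{some} solution of the frozen problem on $B_{r}(x_{0})$ with the right boundary data, not $u$.

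The paper's actual route is different and avoids both issues. It exploits that a viscosity solution of \eqref{eq} is a subsolution of \eqref{dif1} and a supersolution of \eqref{dif2}; it uses the uniform $C^{0,\beta_{0}}$ compactness of Proposition \ref{phol} (not a $C^{1,\alpha_{0}}$ bound) to prove, via a contradiction/compactness argument, the harmonic approximation Lemma \ref{har}: after scaling to the smallness regime of Section \ref{sr}, normalized solutions are $L^{\infty}$-close to solutions of $\mathfrak{F}_{\infty}(D^{2}h)=0$, for which Krylov--Safonov theory applies. The $C^{1,\alpha_{0}}$ estimate is then obtained by iterating the approximation at discrete scales (Lemma \ref{disc}) and undoing the scaling. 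No comparison principle for \eqref{eq} is ever needed.
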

Theorem \ref{t2} is essentially based on the fact that viscosity solutions of \eqref{eq} turn out to be viscosity subsolution of 
\begin{flalign}\label{100}
\min\left\{F(D^{2}u),[\snr{Du}^{p^{+}}+a(x)\snr{Du}^{q}]F(D^{2}u),[\snr{Du}^{p_{-}}+b(x)\snr{Du}^{s}]F(D^{2}u)\right\}=\nr{f}_{L^{\infty}(\Omega)}
\end{flalign}
and viscosity supersolution to
\begin{flalign}\label{101}
\max\left\{F(D^{2}u),[\snr{Du}^{p^{+}}+a(x)\snr{Du}^{q}]F(D^{2}u),[\snr{Du}^{p_{-}}+b(x)\snr{Du}^{s}]F(D^{2}u)\right\}=-\nr{f}_{L^{\infty}(\Omega)}.
\end{flalign}
Combining this information with some delicate perturbation arguments, we are then able to transfer regularity from solutions of suitable homogeneous problems for which the Krylov-Safonov regularity theory is available, to solutions of \eqref{eq} and eventually conclude with \eqref{60}. We can safely conjecture that the strategies exposed here and in \cite{bprt,de1,hprs} provide a solid blueprint for studying also in the setting of free transmission problems models that are more anisotropic than \eqref{91}-\eqref{94} such as
\begin{eqnarray}\label{95}
\left[\snr{Du}\log(1+\snr{Du})+a(x)\snr{Du}^{q}\right]F(D^{2}u)=f(x)\qquad \mbox{in} \ \ \Omega
\end{eqnarray}
or, whenever $\varphi(\cdot)$ and $\psi(\cdot)$ are Orlicz functions:
\begin{eqnarray}\label{96}
\left[\varphi(\snr{Du})+a(x)\psi(\snr{Du})\right]F(D^{2}u)=f(x)\qquad \mbox{in} \ \ \Omega.
\end{eqnarray}
In \eqref{95}-\eqref{96}, $0\le a(\cdot)\in C(\Omega)$ is expected. Equations \eqref{91}-\eqref{94} and \eqref{95}-\eqref{96} are sophisticated examples of singular fully nonlinear elliptic equations, whose most celebrated prototype is
\begin{flalign}\label{classico}
\snr{Du}^{p}F(D^{2}u)=f\qquad \mbox{in} \ \ \Omega,
\end{flalign}
see e.g. \cite{art,bd4,is}. Several aspects of this class of partial differential equations are very well-known: comparison principle and Liouville type theorems \cite{bd1}, properties of eigenvalues and eigenfunctions \cite{bd5}, Alexandrov-Bakelman-Pucci estimates \cite{dafequ,imb}, Harnack inequalities \cite{dafequ1,imb} and regularity \cite{bd4,bd2,bd3,dkm,is}.
\subsection{Nonhomogeneous structures in the variational setting}
As one could expect, equations \eqref{91}-\eqref{94} and \eqref{95}-\eqref{96} have a variational counterpart. Although the study of nonhomogeneous structures in the fully nonlinear framework started very recently with \cite{bprt} for variable exponents and \cite{de1} in the Double Phase case, in the variational setting this field is object of intense investigation and the first results date back to the pioneering papers \cite{ma1,ma2}, where the author introduced the so-called functionals with $(p,q)$-growth, aimed at treating in a unified fashion some regularity aspects of several anisotropic functionals or equations with unbalanced polynomial growth. Later on, lots of efforts have been devoted to the analysis of specific nonautonomous models such as the $p(x)$-laplacian \cite{acmi, tac}:
\begin{eqnarray}\label{pxpx}
W^{1,1}_{\loc}(\Omega)\ni w\mapsto \int_{\Omega}\snr{Dw}^{p(x)} \ \dx,\qquad 1<\inf_{x\in \Omega}p(x)\le p(\cdot)\in C^{0,\alpha}(\Omega)
\end{eqnarray}
or the Double Phase energy \cite{bacomi,comi,demima}:
\begin{flalign}\label{dpdp}
&W^{1,1}_{\loc}(\Omega)\ni w\mapsto \int_{\Omega}\left[\snr{Dw}^{p}+a(x)\snr{Du}^{q}\right] \ \dx \\
&1<p\le q,\qquad \frac{q}{p}\le 1+\frac{\alpha}{n} \qquad 0\le a(\cdot)\in C^{0,\alpha}(\Omega),\nonumber
\end{flalign}
see also \cite{ciccio} for the analysis of nonhomogeneous problems including \eqref{pxpx}-\eqref{dpdp} and obstacle problems. A nontrivial extension of \eqref{dpdp} is the Multi-Phase energy \cite{bbo,deoh}:
\begin{flalign}\label{mpmp}
&W^{1,1}_{\loc}(\Omega)\ni w \mapsto \int_{\Omega} \left[|Dw|^p + \sum_{\iota=1}^{\kk} a_{\iota}(x)|Du|^{q_{\iota}}\right] \, \dx \\
&0\le a_{\iota}(\cdot)\in C^{0,\alpha_{\iota}}(\Omega),\qquad 1\le\frac{q_{\iota}}{p}\le 1+\frac{\alpha_{\iota}}{n},\qquad 1<p\le p_{1}\le \cdots\le p_{\kk},\nonumber 
\end{flalign}
which features several phase transitions in which the functional changes its ellipticity. This seems to be the right choice for modelling anisotropic free transmission problems and in fact its fully nonlinear version \eqref{94} is fundamental for the formulation of \eqref{eq}. A borderline version of \eqref{dpdp} is the following \cite{demi1}:
\begin{flalign*}
&W^{1,1}_{\loc}(\Omega)\ni w\mapsto \int_{\Omega}\left[\snr{Dw}\log(1+\snr{Dw})+a(x)\snr{Dw}^{q}\right] \ \dx\nonumber \\
&0\le a(\cdot)\in W^{1,d}(\Omega) \ \mbox{with} \ d>n,\qquad q<1+\frac{1}{n}-\frac{1}{d},
\end{flalign*}
which in nondivergence form becomes \eqref{95}. Other models inspired by \eqref{pxpx}-\eqref{dpdp} are the Double Phase energy with variable exponents \cite{rata,tac}, see \eqref{93}-\eqref{94} and the generalized Double Phase integral \cite{boh}, cf. \eqref{96}; we further refer to \cite{mira} for an account of the state of the art on this matter. The peculiarity of these functionals is that in the variational setting there is a strict interplay between the regularity of the $x$-depending coefficients and the regulairty of minimizers, therefore each of them has to be treated in a very specific way that takes into account the structure of the operator involved. Only recently a unified approach has been proposed in \cite{haok} in the framework of Musielak-Orlicz spaces. As already observed in \cite[Section 1]{de1}, there is a huge difference in the behavior of the nonhomogeneous structures listed above between the variational and the non-variational setting and this phenomenon is confirmed by Theorem \ref{t2} for anisotropic free transmission problems. In sharp contrast to what happens for instance with \eqref{dpdp}-\eqref{mpmp}, where a quantitative H\"older continuity (depending on the growth exponents) of the modulating coefficient $a(\cdot)$ is needed to get regular minima \cite{eslemi,fomami}, here the plain continuity of $a(\cdot)$ and $b(\cdot)$ suffices, cf. \eqref{ab}. In fact, to prove our regularity results, we just ask that the coefficients $a(\cdot)$, $b(\cdot)$ are continuous and no restriction on the size of the differences $0\le q-p^{+}$, $0\le s-p_{-}$ is imposed, see \eqref{pqs}. This makes Theorem \ref{t2} sharp from the viscosity theory viewpoint. 
\subsubsection*{Organization of the paper}
This paper is organized as follows. In Section \ref{pre} we display our notation, describe the main assumptions considered by Theorems \ref{t1}-\ref{t2} and recall some well-known results that will be needed in later on. In Section \ref{ex} we prove Theorem \ref{t1}, i.e. that there exist at least one solution to Dirichlet problem \eqref{pd}. In Section \ref{hr} we establish a uniform H\"older continuity result for solutions of suitable switched equations related to \eqref{eq}. Finally, Section \ref{ghgh} contains a description of the scaling properties of the viscosity differential inequalities \eqref{100}-\eqref{101}, a "harmonic" approximation lemma and the proof of Theorem \ref{t2}.
\section{Preliminaries}\label{pre}
We shall split this section in three parts: first, we display our notation, then we collect the main assumptions governing problem \eqref{eq}, and finally we report some well-known results on the theory of viscosity solutions to uniformly elliptic operators. 
\subsection{Notation}
In this paper, $\Omega\subset \mathbb{R}^{n}$, $n\ge 2$ is an open and bounded domain, the open ball of $\mathbb{R}^{n}$ centered at $x_{0}$ with positive radius $\rr$ is denoted by $B_{\rr}(x_{0}):=\left\{x\in \mathbb{R}^{n}\colon \snr{x-x_{0}}<\rr\right\}$. When not relevant, or clear from the context, we will omit indicating the center, $B_{\rr}\equiv B_{\rr}(x_{0})$. In particular, for $\rr=1$ and $x_{0}= 0$, we shall simply denote $B_{1}\equiv B_{1}(0)$. With $\mathcal{S}(n)$ we mean the space of $n\times n$ symmetric matrices. As usual, we denote by $c$ a general constant larger than one. Different occurrences from line to line will be still indicated by $c$ and relevant dependencies from certain parameters will be emphasized using brackets, i.e.: $c(n,p)$ means that $c$ depends on $n$ and $p$. With $z,\xi\in \mathbb{R}^{n}$, $\mu\in [0,1]$, $p,q,s\in [0,\infty)$ and $a(\cdot)$, $b(\cdot)$ being nonnegative functions, we define
\begin{eqnarray*}
\ell_{\mu}(z):=\sqrt{\mu^{2}+\snr{z}^{2}},\qquad p_{v}(x):=p^{+}\mathds{1}_{\{v>0\}}+p_{-}\mathds{1}_{\{v<0\}}
\end{eqnarray*}
and
\begin{flalign*}
&H_{q}(x,z;\xi):=\left[\snr{\xi+z}^{p^{+}}+a(x)\snr{\xi+z}^{q}\right],\qquad H_{s}(x,z;\xi):=\left[\snr{\xi+z}^{p^{-}}+b(x)\snr{\xi+z}^{s}\right]\nonumber \\
&\qquad \quad  H(x,v,z;\xi):=\left[\snr{\xi+z}^{p_{v}(x)}+a(x)\mathds{1}_{\{v>0\}}\snr{\xi+z}^{q}+b(x)\mathds{1}_{\{v<0\}}\snr{\xi+z}^{s}\right].
\end{flalign*}
When $\xi\equiv 0$, we shall simply write $H_{q}(x,z;0)\equiv H_{q}(x,z)$, $H_{s}(x,z;0)\equiv H_{s}(x,z)$ and $H(x,z;0)\equiv H(x,z)$. If $g\colon \Omega\to \mathbb{R}^{k}$ is any map, $U\subset \Omega$ is an open set and $\beta \in (0,1]$ is a given number we shall denote
\begin{flalign*}
[g]_{0,\beta;U}:=\sup_{x,y \in U; x\not=y}\frac{\snr{g(x)-g(y)}}{\snr{x-y}^{\beta}}, \qquad [g]_{0,\beta}:=[g]_{0,\beta;\Omega}.
\end{flalign*}
It is well known that the quantity defined above is a seminorm and when $[g]_{0,\beta;U}<\infty$, we will say that $g$ belongs to the H\"older space $C^{0,\beta}(U,\mathbb{R}^{k})$. We stress also that $g\in C^{1,\beta}(U,\mathbb{R}^{k})$ provided that
\begin{flalign*}
[g]_{1+\beta;U}:=\sup_{\rr>0,x\in U}\inf_{\xi\in \mathbb{R}^{n},\kappa\in \mathbb{R}}\sup_{y\in B_{\rr}(x)\cap U}\rr^{-(1+\beta)}\snr{g(y)-\xi\cdot y-\kappa}<\infty.
\end{flalign*}
Finally, $\mathbf{I}$ denotes the identity of $\mathbb{R}^{n\times n}$ and given any $n\times n$ matrix $A$, by $\texttt{tr}(A)$ we mean the trace of $A$, i.e., the sum of all its eigenvalues, by $\texttt{tr}(A^{+})$ the sum of all positive eigenvalues of $A$ and by $\texttt{tr}(A^{-})$ the sum of all negative eigenvalues of $A$. 
\subsection{On uniformly elliptic operators}\label{uniop} A map $G\in C( \Omega\times \mathbb{R}^{n}\times \mathcal{S}(n), \mathbb{R})$ is monotone if
\begin{flalign}\label{mon}
G(x,z,M)\le G(x,z,N)\quad \mbox{for all} \ \ M,N\in \mathcal{S}(n) \ \ \mbox{satisfy} \ \ M\ge N.
\end{flalign}
The $(\lambda,\Lambda)$-ellipticity condition for an operator $F\colon \mathcal{S}(n)\to \mathbb{R}$ prescribes that, whenever $A,B\in \mathbb{S}(n)$ are symmetric matrices with $B\ge 0$,
\begin{flalign}\label{ell}
\lambda \texttt{tr}(B)\le F(A)-F(A+B)\le \Lambda \texttt{tr}(B)
\end{flalign}
for and some fixed constants $0<\lambda\le \Lambda$. With this definition, $F(A):=-\texttt{tr}(A)$ is uniformly elliptic with $\lambda=\Lambda=1$ \cite{is}, so the usual Laplace operator is uniformly elliptic. Moreover, it is easy to see that, if $L$ is any fixed, positive constant, then the operator $F_{L}(M):=LF\left(\frac{1}{L}M\right)$ satisfies \eqref{ell} with the same constants $0<\lambda\le\Lambda$. Moreover, \eqref{ell} is also verified by the operator $\tilde{F}(M):=-F(-M)$, cf. \cite[Section 2.2]{de1}. In this framework, it is important to introduce the Pucci extremal operators $\mathcal{M}^{\pm}_{\lambda,\Lambda}(\cdot)$, which are, respectively, the maximum and the minimum of all the uniformly elliptic functions $F(\cdot)$ with $F(0)=0$. In particular they admit the compact form
\begin{flalign}\label{m-}
\mathcal{M}^{+}_{\lambda,\Lambda}(A)= -\Lambda \texttt{tr}(A^{-})-\lambda \texttt{tr}(A^{+})\quad \mbox{and}\quad \mathcal{M}^{-}_{\lambda,\Lambda}(A)=-\Lambda \texttt{tr}(A^{+})-\lambda \texttt{tr}(A^{-}).
\end{flalign}
We can give an alternative formulation of \eqref{ell} involving the Pucci extremal operators:
\begin{flalign}\label{elll}
\mathcal{M}^{-}_{\lambda,\Lambda}(B)\le F(A+B)-F(A)\le \mathcal{M}^{+}_{\lambda,\Lambda}(B),
\end{flalign}
that holds for all $A,B\in \mathcal{S}(n)$. Next, we turn our attention to equation
\begin{flalign}\label{211r}
G_{\xi}(x,Du,D^{2}u):=G(x,\xi+Du,D^{2}u)=0\quad \mbox{in} \ \ \Omega,
\end{flalign}
with $G(\cdot)$ continuous and satisfying \eqref{mon} and $\xi \in \mathbb{R}^{n}$ arbitrary vector. The concept of viscosity solution to \eqref{211r} can be explained as follows, cf. \cite{baim}:
\begin{definition}\label{def}
A lower semicontinuous function $v$ is a viscosity supersolution of \eqref{211r} if whenever $\varphi\in C^{2}(\Omega)$ and $x_{0}\in \Omega$ is a local minimum point of $v-\varphi$, then 
\begin{flalign*}
G_{\xi}(x_{0},D\varphi(x_{0}),D^{2}\varphi(x_{0}))\ge 0,
\end{flalign*}
while an upper semicontinuous function $w$ is a viscosity subsolution to \eqref{211r} provided that if $x_{0}$ is a local maximum point of $w-\varphi$, there holds
\begin{flalign*}
G_{\xi}(x_{0},D\varphi(x_{0}),D^{2}\varphi(x_{0}))\le 0.
\end{flalign*}
The map $u\in C(\Omega)$ is a viscosity solution of \eqref{211r} if it is a the same time a viscosity subsolution and a viscosity supersolution.
\end{definition}
In particular, a function $u\in W^{2,n}_{\loc}(\Omega)$ is a strong solution of \eqref{211r} if it verifies such identity almost everywhere in $\Omega$. Another important notion is the one of subjets and superjets \cite{baim}.
\begin{definition}\label{def2}
Let $v\colon \Omega\to \mathbb{R}$ be an upper semicontinuous function and $w\colon \Omega\to \mathbb{R}$ be a lower semicontinuous function.
\begin{itemize}
    \item A couple $(z,X)\in \mathbb{R}^{n}\times \mathcal{S}(n)$ is a superjet of $v$ at $x\in \Omega$ if
    \begin{flalign*}
    v(x+y)\le v(x)+z\cdot y+\frac{1}{2}Xy\cdot y+o(\snr{y}^{2}).
    \end{flalign*}
    \item A couple $(z,X)\in \mathbb{R}^{n}\times \mathcal{S}(n)$ is a subjet of $w$ at $x\in \Omega$ if
    \begin{flalign*}
    w(x+y)\ge w(x)+z\cdot y+\frac{1}{2}Xy\cdot y+o(\snr{y}^{2}).
    \end{flalign*}
    \item A couple $(z,X)\in \mathbb{R}^{n}\times \mathcal{S}(n)$ is a limiting superjet of $v$ ar $x\in \Omega$ if there exists a sequence $\{x_{j},z_{j},X_{j}\}\to_{j\to \infty}\{x,z,X\}$ such that $\{z_{j},X_{j}\}$ is a superjet of $v$ at $x_{j}$ and $v(x_{j})\to_{j\to \infty}v(x)$.
    \item A couple $(z,X)\in \mathbb{R}^{n}\times \mathcal{S}(n)$ is a limiting subjet of $w$ ar $x\in \Omega$ if there exists a sequence $\{x_{j},z_{j},X_{j}\}\to_{j\to \infty}\{x,z,X\}$ such that $\{z_{j},X_{j}\}$ is a subjet of $w$ at $x_{j}$ and $w(x_{j})\to_{j\to \infty}w(x)$.
\end{itemize}
\end{definition}
Now we are in position to present a variation on the celebrated Ishii-Lions lemma, \cite{baim,cil}.
\begin{proposition}\label{p1}
Let $v$ be an upper semicontinuous viscosity subsolution of \eqref{211r}, $w$ a lower semicontinuous viscosity supersolution of \eqref{211r}, $U\Subset \Omega$ an open set and $\psi \in C^{2}(U\times U)$. If $(\bar{x},\bar{y})\in U\times U$ is a local maximum point of $v(x)-w(y)-\psi(x,y)$, then, for any $\iota>0$ there exists a threshold $\hat{\delta}\equiv\hat{\delta}(\iota, \nr{D^{2}\psi})>0$ such that for all $\delta \in (0,\hat{\delta})$ we have matrices $X_{\delta},Y_{\delta}\in \mathcal{S}(n)$ such that
\begin{flalign*}
G_{\xi}(\bar{x},v(\bar{x}),\partial_{x}\psi(\bar{x},\bar{y}),X_{\delta})\le 0\le G_{\xi}(\bar{y},w(\bar{y}),-\partial_{y}\psi(\bar{x},\bar{y}),Y_{\delta}).
\end{flalign*}
and the inequality
\begin{flalign*}
-\frac{1}{\delta}\mathbf{I}\le\begin{bmatrix} X_{\delta} & 0 \\ 0 & -Y_{\delta}\end{bmatrix}\le D^{2}\psi(\bar{x},\bar{y})+\delta\mathbf{I}
\end{flalign*}
holds true.
\end{proposition}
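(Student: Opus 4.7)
The plan is to derive this variant as a direct consequence of the classical Crandall-Ishii Theorem of Sums (the maximum principle for semicontinuous functions) and then convert the resulting membership in the closure of the super/subjets of $v$ and $w$ into the pointwise viscosity inequalities for $G_{\xi}$.

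First, I would exploit the local maximum hypothesis on $\Phi(x,y):=v(x)-w(y)-\psi(x,y)$. Since $v$ is upper semicontinuous, $w$ is lower semicontinuous, and $\psi\in C^{2}(U\times U)$, the setup is exactly the one of the classical Theorem of Sums. Invoking it, for every $\iota>0$ and every $\delta\in(0,\hat{\delta})$ with $\hat{\delta}$ depending on $\iota$ and $\nr{D^{2}\psi}$, one obtains matrices $X_{\delta},Y_{\delta}\in\mathcal{S}(n)$ together with the bilateral matrix bound stated in the proposition and with
\[
(\partial_{x}\psi(\bar{x},\bar{y}),X_{\delta})\in\overline{J}^{2,+}v(\bar{x}),\qquad (-\partial_{y}\psi(\bar{x},\bar{y}),Y_{\delta})\in\overline{J}^{2,-}w(\bar{y}).
\]
The role of $\hat{\delta}$ is precisely to calibrate the quadratic perturbation that appears in the doubling-of-variables step: the upper bound $D^{2}\psi(\bar{x},\bar{y})+\delta\mathbf{I}$ absorbs this perturbation, while the lower bound is automatic once $\delta$ is sufficiently small.

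Next, I would translate the jet information into viscosity inequalities. By Definition \ref{def2}, the pair $(\partial_{x}\psi(\bar{x},\bar{y}),X_{\delta})$ arises as a limit of genuine superjets $(z_{j},X_{j})$ of $v$ at points $x_{j}\to\bar{x}$ with $v(x_{j})\to v(\bar{x})$. For each $j$, a standard quadratic test function tangent to $v$ from above at $x_{j}$ yields, via the subsolution property of $v$ and the equivalence between the jet formulation and the $C^{2}$-test-function formulation given in Definition \ref{def},
\[
G_{\xi}(x_{j},v(x_{j}),z_{j},X_{j})\le 0.
\]
Passing to the limit using the continuity of $G$ in all its arguments produces $G_{\xi}(\bar{x},v(\bar{x}),\partial_{x}\psi(\bar{x},\bar{y}),X_{\delta})\le 0$. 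A mirror-image argument at $\bar{y}$, based this time on the limiting subjet of $w$ and on the supersolution property of $w$, yields the reverse inequality.

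The genuine obstacle in this scheme is the first step, namely the Theorem of Sums itself in the form that produces the threshold $\hat{\delta}(\iota,\nr{D^{2}\psi})$ with the stated bilateral Hessian control. Its proof, which I would cite rather than redo, relies on sup/inf-convolution regularization of $v$ and $w$ and on the Alexandrov-Busemann-Feller a.e.\ second differentiability theorem to transfer pointwise information from the regularizations back to the semicontinuous originals. By contrast, once the matrices are in hand, the conversion from limiting jets into pointwise inequalities for $G_{\xi}$ is a routine continuity passage, entirely independent of the monotonicity condition \eqref{mon} which plays no role beyond ensuring that the notion of viscosity solution is well posed.
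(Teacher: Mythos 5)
Your proposal is correct, and it matches the route the paper implicitly takes: the paper does not prove Proposition \ref{p1} at all, but introduces it as ``a variation on the celebrated Ishii--Lions lemma'' and simply cites \cite{baim,cil}. Your two-step decomposition---invoke the Crandall--Ishii Theorem of Sums to produce $X_\delta$, $Y_\delta$ together with the matrix inequality and the membership $(\partial_x\psi(\bar x,\bar y),X_\delta)\in\overline J^{2,+}v(\bar x)$, $(-\partial_y\psi(\bar x,\bar y),Y_\delta)\in\overline J^{2,-}w(\bar y)$, and then pass from limiting jets to the pointwise inequalities for $G_\xi$ by continuity of $G$---is exactly the content of those references, so you are proving a statement the paper takes as known and doing so by the standard argument.
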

So far, we have described the main features of equations governed by a continuous map $G(\cdot)$, while in the forthcoming sections we shall deal with problems with discontinuous degeneracies of the type
\begin{flalign}\label{d.1}
\left[\snr{\xi+Du}^{p_{v}(x)}+a(x)\mathds{1}_{\{v>0\}}\snr{\xi+Du}^{q}+b(x)\mathds{1}_{\{v<0\}}\snr{\xi+Du}^{s}\right]F(D^{2}u)=f(x)\qquad \mbox{in}\ \ \Omega,
\end{flalign}
where assumptions \texttt{set} is in force, $\xi\in \mathbb{R}^{n}$ is any vector and $v\in C(\bar{\Omega})$. In the light of the discussion in \cite[Section 2.2]{hprs}, we define a viscosity solution to \eqref{d.1} as follows.
\begin{definition}\label{def3}
Let $v\in C(\Omega)$ be a function, $\xi\in \mathbb{R}^{n}$ be a vector and assumptions \texttt{set} be in force. The map $u\in C(\Omega)$ is a viscosity solution to \eqref{d.1} if
\begin{itemize}
    \item in the set $\{x\in \Omega\colon v(x)>0\}\cap \{x\in \Omega\colon v(x)<0\}$ $u$ is a viscosity solution of \eqref{d.1};
    \item $u$ is a viscosity subsolution of 
    \begin{eqnarray*}
    \min\left\{F(D^{2}u),H_{q}(x,Du;\xi)F(D^{2}u),H_{s}(x,Du;\xi)F(D^{2}u)\right\}=f(x)
    \end{eqnarray*}
    in $\left\{x\in \Omega\colon v(x)=0\right\}$;
    \item $u$ is a viscosity supersolution of 
    \begin{eqnarray*}
    \max\left\{F(D^{2}u),H_{q}(x,Du;\xi)F(D^{2}u),H_{s}(x,Du;\xi)F(D^{2}u)\right\}=f(x)
    \end{eqnarray*}
    in $\left\{x\in \Omega\colon v(x)=0\right\}$.
\end{itemize}
\end{definition}
From now on, whenever we refer to a continuous viscosity solution of equations \eqref{eq} or \eqref{d.1} or to Dirichlet problem \eqref{pd}, we shall mean it in the sense of Definition \ref{def3}. The previous position can be justified by noticing that if we set $$\mathfrak{H}_{\xi}(x,v,z,X):=\left[H(x,v,z;\xi)F(X)-f(x)\right]$$ and introduce the semicontinuous envelopes $\mathfrak{H}_{\xi*}(\cdot)$ and $\mathfrak{H}_{\xi}^{*}(\cdot)$, then $u\in C(\Omega)$ is a viscosity solution of \eqref{d.1} if and only if it is a viscosity subsolution of $\mathfrak{H}_{\xi*}(x,v,Du,D^{2}u)=0$ and a viscosity supersolution of $\mathfrak{H}_{\xi}^{*}(x,v,Du,D^{2}u)=0$, cf. \cite{ccks,hprs}. We conclude this section by noticing that if $u\in C(\Omega)$ is a viscosity solution of equation \eqref{d.1}, then it is a viscosity subsolution to
\begin{flalign}\label{dif1}
\min\left\{F(D^{2}u),H_{q}(x,Du;\xi)F(D^{2}u),H_{s}(x,Du;\xi)F(D^{2}u)\right\}=\nr{f}_{L^{\infty}(\Omega)}\quad \mbox{in} \ \ \Omega
\end{flalign}
and a viscosity supersolution of
\begin{flalign}\label{dif2}
\max\left\{F(D^{2}u),H_{q}(x,Du;\xi)F(D^{2}u),H_{s}(x,Du;\xi)F(D^{2}u)\right\}=-\nr{f}_{L^{\infty}(\Omega)}\quad \mbox{in} \ \ \Omega.
\end{flalign}
This obsevation will be useful when proving regularity, see Sections \ref{hr}-\ref{ghgh} below.
\begin{remark}
\emph{If $u\in C(\Omega)$ is a viscosity solution of \eqref{d.1} in the sense of Definition \ref{def3}, then it is a viscosity subsolution/supersolution of variants of \eqref{dif1}/\eqref{dif2} having as a right-hand side term any constant larger or equal to $\nr{f}_{L^{\infty}(\Omega)}$.}
\end{remark}
\subsection{The homogeneous problem}
Viscosity solutions of the homogeneous problem
\begin{flalign}\label{probhom}
F(D^{2}h)=0\qquad \mbox{in} \ \ B_{1}(0)
\end{flalign}
will play a crucial role in the proof of the main results of this paper.
In fact, viscosity solutions of problem \eqref{probhom} have good regularity properties, as the next proposition shows. For a proof, we refer to \cite[Corollary 5.7]{caca}.
\begin{theorem}\label{rhar}
Assume that $F(\cdot)$ verifies \eqref{ell}-\eqref{assf} and $h\in C(B_{1}(0))$ be a viscosity solution of \eqref{probhom}. Then, there exist $\alpha\equiv\alpha(n,\lambda,\Lambda)\in (0,1)$ and $c\equiv c(n,\lambda,\Lambda)>0$ such that
\begin{flalign}\label{031}
\nr{h}_{C^{1,\alpha}(\bar{B}_{1/2}(0))}\le c\nr{h}_{L^{\infty}(B_{1}(0))}.
\end{flalign}
\end{theorem}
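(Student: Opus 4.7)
The strategy is to follow the classical compactness-plus-iteration scheme of Caffarelli. Since the estimate \eqref{031} is scale-invariant under $h\mapsto h/\nr{h}_{L^{\infty}(B_1(0))}$ and interior regularity at a generic point of $B_{1/2}(0)$ follows by a translation and a covering argument, it suffices to prove a pointwise $C^{1,\alpha}$ bound at the origin for any viscosity solution $h$ of \eqref{probhom} with $|h|\le 1$.

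\textbf{Step 1: universal $C^{0,\alpha_{1}}$ bound.} Because $F(\cdot)$ is uniformly elliptic, the difference $h_{\tau}(x):=h(x+\tau)-h(x)$ of two translates of a solution satisfies the Pucci differential inequalities $\mathcal{M}^{-}_{\lambda,\Lambda}(D^{2}h_{\tau})\le 0\le \mathcal{M}^{+}_{\lambda,\Lambda}(D^{2}h_{\tau})$, which is a direct consequence of \eqref{elll}. Applying the interior Krylov–Safonov Hölder estimate to $h_{\tau}$ on $B_{3/4}(0)$ and letting $|\tau|\to 0$ yields a universal exponent $\alpha_{1}\in(0,1)$ and a universal bound $\nr{h}_{C^{0,\alpha_{1}}(\bar B_{3/4}(0))}\le c(n,\lambda,\Lambda)$.

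\textbf{Step 2: affine improvement lemma.} I would show that there exist $\rho\in(0,1/2)$, $\alpha\in(0,\alpha_{1}]$ and $C_{0}>0$, all depending only on $n,\lambda,\Lambda$, such that for every solution $h$ as above one can find an affine function $\ell(x)=a\cdot x+b$ with $|a|+|b|\le C_{0}$ and
\begin{flalign*}
\sup_{B_{\rho}(0)}|h-\ell|\le \rho^{1+\alpha}.
\end{flalign*}
The proof is by contradiction: if the conclusion failed, one would extract a sequence $(h_{k},F_{k})$ with uniform ellipticity constants $(\lambda,\Lambda)$ and the uniform modulus provided by Step 1. By Arzelà–Ascoli one passes to uniform limits $h_{k}\to h_{\infty}$, $F_{k}\to F_{\infty}$ on compact subsets; the stability of viscosity solutions under such limits gives $F_{\infty}(D^{2}h_{\infty})=0$ in $B_{1/2}(0)$. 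Because $F_{\infty}$ is a constant-coefficient $(\lambda,\Lambda)$-elliptic operator, $h_{\infty}$ admits a pointwise first-order Taylor expansion at $0$ with universally bounded linear part; taking $\ell$ to be this expansion and choosing $\rho$ small enough contradicts the failure of the lemma for large $k$.

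\textbf{Step 3: iteration.} Applying the lemma inductively to the rescalings $\tilde{h}(x):=\rho^{-(1+\alpha)}\bigl(h(\rho x)-\ell(\rho x)\bigr)$, each of which again solves a $(\lambda,\Lambda)$-elliptic equation with the same constants, produces a sequence of affine functions $\{\ell_{k}\}$ satisfying $\sup_{B_{\rho^{k}}(0)}|h-\ell_{k}|\le \rho^{k(1+\alpha)}$ and $|D\ell_{k+1}-D\ell_{k}|\le C\rho^{k\alpha}$. Summing the telescoping geometric series yields a limiting affine $\ell_{\infty}$ with $\sup_{B_{r}(0)}|h-\ell_{\infty}|\le C r^{1+\alpha}$ for every $r\in(0,1/2]$, which is precisely the pointwise $C^{1,\alpha}$ estimate at $0$. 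Repeating this argument at each base point in $B_{1/2}(0)$ and using Step 1 to bound $\nr{h}_{L^{\infty}}$ gives \eqref{031}.

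\textbf{Main obstacle.} The delicate point is Step 2: the compactness argument requires differentiability of the limit $h_{\infty}$ at a point, which is almost what one is trying to prove. The loop is closed by the fact that the limit equation is constant-coefficient, for which a pointwise $C^{1,\bar\alpha}$ bound can be obtained via a separate ABP-based / $W^{2,\varepsilon}$ argument that does not rely on the iteration to follow. This is exactly where the specific structure of uniformly elliptic fully nonlinear operators (as opposed to merely monotone operators) is used in an essential way.
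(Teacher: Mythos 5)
The paper itself does not prove this theorem: the statement is quoted directly from \cite[Corollary 5.7]{caca}, so there is no internal argument to compare against. On its own terms, your proposal is circular, and you effectively concede this in the ``Main obstacle'' paragraph. The compactness in Step~2 produces a limit $h_{\infty}$ solving $F_{\infty}(D^{2}h_{\infty})=0$ with $F_{\infty}$ again a constant--coefficient $(\lambda,\Lambda)$--elliptic operator --- an equation of \emph{exactly} the same class as \eqref{probhom}. To extract a first--order expansion of $h_{\infty}$ at $0$ you then appeal to a ``separate ABP--based / $W^{2,\varepsilon}$ argument''; but that argument, applied to the original $F$, already yields Theorem~\ref{rhar}, so Steps~2--3 do no work. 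Compactness--plus--iteration is a perturbation scheme: it buys something only when the limit operator is \emph{strictly simpler} than the one you started from (this is how the paper uses it in Lemma~\ref{har}, where the small nonhomogeneous right--hand side and the degeneracy are perturbed away to reach \eqref{probhom}, and Theorem~\ref{rhar} enters there precisely as an external input for the limit profile via \eqref{0331}). For \eqref{probhom} itself there is nothing to perturb away, and all the substance is silently deferred to the unproved ``separate argument.'' Step~1 also misfires as written: applying Krylov--Safonov to $h_{\tau}(x)=h(x+\tau)-h(x)$ and then letting $\tau\to0$ gives only the trivial bound $0\le 0$; the $C^{0,\alpha_{1}}$ bound for $h$ follows by applying Krylov--Safonov to $h$ itself, which satisfies $\mathcal{M}^{-}_{\lambda,\Lambda}(D^{2}h)\le 0\le\mathcal{M}^{+}_{\lambda,\Lambda}(D^{2}h)$ by \eqref{elll} together with $F(0)=0$.

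A noncircular route --- and, up to packaging, the one behind \cite[Corollary 5.7]{caca} --- does not use flatness improvement at all. One first proves an interior Lipschitz estimate for $h$, via the touching--by--paraboloids / ABP machinery underlying Caffarelli's $W^{2,\varepsilon}$ estimate (or, more directly, the Harnack inequality applied to suitable increments of $h$). Then, for a unit vector $e$ and $\tau>0$, the normalized increment $v_{\tau}:=\tau^{-1}\bigl(h(\cdot+\tau e)-h(\cdot)\bigr)$ satisfies $\mathcal{M}^{-}_{\lambda,\Lambda}(D^{2}v_{\tau})\le 0\le \mathcal{M}^{+}_{\lambda,\Lambda}(D^{2}v_{\tau})$ by \eqref{elll}, with $\|v_{\tau}\|_{L^{\infty}}$ bounded uniformly in $\tau$ by the Lipschitz constant; Krylov--Safonov then gives $\|v_{\tau}\|_{C^{0,\alpha}(\bar{B}_{1/2}(0))}\le c(n,\lambda,\Lambda)$ uniformly in $\tau$, and passing $\tau\to0$ shows $D_{e}h\in C^{0,\alpha}$, i.e.\ $h\in C^{1,\alpha}$ with the estimate \eqref{031}. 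If you want to keep a compactness--plus--iteration presentation, you must first establish the constant--coefficient $C^{1,\bar\alpha}$ theory by one of these direct methods and only then deploy the scheme for genuinely perturbed equations, which is precisely the architecture of Section~\ref{ghgh}.
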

Theorem \ref{rhar} yields in particular that if $h\in C(B_{1}(0))$ is a viscosity solution to \eqref{probhom}, then it is $C^{1,\alpha}$-regular around zero, which means that for all $\rr\in (0,1)$ there exists a $\xi_{\rr}\in \mathbb{R}^{n}$ such that
\begin{flalign}\label{030}
\osc_{B_{\rr}}(h-\xi_{\rr}\cdot x)\le c(n,\lambda,\Lambda)\rr^{1+\alpha}.
\end{flalign}
Now fix $\sigma\in (0,1)$ so small that
\begin{flalign}\label{delta}
c\sigma^{\alpha}<\frac{1}{4},
\end{flalign}
where $c=c(n,\lambda,\Lambda)$ is the constant appearing in \eqref{031} and let $\xi_{\sigma}\in \mathbb{R}^{n}$ be the corresponding vector in \eqref{030}. According to the choice made in \eqref{delta}, estimate \eqref{030} reads as
\begin{flalign}\label{0331}
\osc_{B_{\sigma}(0)}(h-\xi_{\sigma}\cdot x)\le \frac{1}{4}\sigma, \qquad \mbox{with}\ \  \sigma\equiv\sigma(n,\lambda,\Lambda).
\end{flalign}
This will be useful later on.

\subsection{Main assumptions}\label{mass}
When dealing with equation \eqref{eq} or with Dirichlet problem \eqref{pd}, the following assumptions will be in force. As mentioned before, the set $\Omega\subset \mathbb{R}^{n}$ is an open and bounded domain with smooth boundary. 
The nonlinear operator $F(\cdot)$ is continuous and $(\lambda,\Lambda)$-elliptic in the sense of \eqref{ell}. Moreover
\begin{flalign}\label{assf}
F\in C(\mathcal{S}(n),\mathbb{R}), \qquad F(0)=0.
\end{flalign}
Concerning the non-homogeneous degeneracy term appearing in \eqref{eq}, we shall ask that the exponents $p^{+},p_{-},q,s$ satisfy 
\begin{eqnarray}\label{pqs}
0\le p^{+}\le q\qquad \mbox{and}\qquad 0\le p_{-}\le s,
\end{eqnarray}
the modulating coefficients $a(\cdot)$, $b(\cdot)$ are so that
\begin{eqnarray}\label{ab}
0\le a(\cdot)\in C(\Omega)\qquad \mbox{and}\qquad 0\le b(\cdot)\in C(\Omega)
\end{eqnarray}
and the forcing term $f$ verifies
\begin{eqnarray}\label{f}
f\in C(\Omega)\cap L^{\infty}(\Omega).
\end{eqnarray}
To simplify the notation, we shall collect the main parameters related to the problems under investigation in the shorthand $\texttt{data}:=\left(n,\lambda,\Lambda,p^{+},p_{-},q,s\right)$ and abbreviate the assumptions considered as $\texttt{set}:=\left(\eqref{ell},\eqref{assf},\eqref{pqs},\eqref{ab},\eqref{f}\right).$

\section{Existence of solutions}\label{ex}
In this section we prove the existence of a continuous viscosity solution to Dirichlet problem \eqref{pd}. To do so, we need to introduce a family of approximating problems, prove a comparison principle and then conclude via Perron method. For $\varepsilon\in (0,1)$, let $\{\phi_{\varepsilon}\}\subset C^{\infty}(\mathbb{R}^{n})$ be a sequence of radially symmetric, nonnegative mollifiers of $\mathbb{R}^{n}$, $v\in C(\bar{\Omega})$ be a continuous function, $\{\chi_{\varepsilon;v}^{+}\}:=\{\phi_{\varepsilon}*\mathds{1}_{\{v>0\}}\}\subset C^{\infty}_{\loc}(\Omega)$, $\{\chi_{\varepsilon;v}^{-}\}:=\{\phi_{\varepsilon}*\mathds{1}_{\{v<0\}}\}\subset C^{\infty}_{\loc}(\Omega)$,
\begin{flalign}\label{abe}
\begin{cases}
\ p_{\varepsilon;v}(x):=\varepsilon+p^{+}\chi_{\varepsilon;v}^{+}(x)+p_{-}\chi_{\varepsilon;v}^{-}(x)\\
\ a_{\varepsilon;v}(x):=(\varepsilon+a(x)\chi_{\varepsilon}^{+}(x))\\
\ b_{\varepsilon;v}(x):=(\varepsilon+b(x)\chi_{\varepsilon}^{-}(x)),
\end{cases}
\end{flalign}
for $p^{+}$, $p_{-}$, $a(\cdot)$, $b(\cdot)$ as in \eqref{pqs}-\eqref{ab}. By very definition, both the coefficients defined in \eqref{abe} are nonnegative and continuous in $\Omega$. With these positions at hand, set
\begin{eqnarray*}
\Omega\times \mathbb{R}^{n}\ni (x,z)\mapsto G_{\varepsilon}(x,z):=\left[\ell_{\varepsilon}(z)^{p_{\varepsilon;v}(x)}+a_{\varepsilon;v}(x)\ell_{\varepsilon}(z)^{q}+b_{\varepsilon;v}(x)\ell_{\varepsilon}(z)^{s}\right]
\end{eqnarray*}
and consider the equation
\begin{flalign}\label{eqe}
G_{\varepsilon}(x,Du_{\varepsilon})\left(\varepsilon u_{\varepsilon}+F(D^{2}u_{\varepsilon})\right)=f(x)\qquad \mbox{in} \ \ \Omega,
\end{flalign}
with $F(\cdot)$ as in \eqref{assf} and $f(\cdot)$ described by \eqref{f}. Let us prove a comparison principle for subsolutions and supersolutions of \eqref{eqe}.
\begin{lemma}\label{comp}
Let $\Omega\subset \mathbb{R}^{n}$ be an open bounded domain, assumptions \texttt{set} be in force, $w_{1}\in USC(\bar{\Omega})$ be a subsolution of \eqref{eqe} and $w_{2}\in LSC(\bar{\Omega})$ a supersolution of \eqref{eqe}. Then
\begin{eqnarray*}
w_{1}\le w_{2} \ \ \mbox{on} \ \ \partial \Omega \ \Longrightarrow \ w_{1}\le w_{2} \ \ \mbox{in} \ \ \Omega.
\end{eqnarray*}
\end{lemma}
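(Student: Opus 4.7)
The plan is the standard doubling-of-variables comparison proof, driven by two structural features of the regularized operator in \eqref{eqe}. First, thanks to the $\varepsilon$-regularization in \eqref{abe}, the coefficient
$$G_{\varepsilon}(x,z)=\ell_{\varepsilon}(z)^{p_{\varepsilon;v}(x)}+a_{\varepsilon;v}(x)\ell_{\varepsilon}(z)^{q}+b_{\varepsilon;v}(x)\ell_{\varepsilon}(z)^{s}$$
is continuous on $\Omega\times\mathbb{R}^{n}$ and uniformly bounded below by some $c(\varepsilon)>0$ (since $p_{\varepsilon;v}(x)\ge\varepsilon$, $\ell_{\varepsilon}(z)\ge\varepsilon$, and $p_{\varepsilon;v},a_{\varepsilon;v},b_{\varepsilon;v}$ are smooth and nonnegative). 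Second, the $\varepsilon u_{\varepsilon}$ term renders the operator
$$E(x,r,z,X):=G_{\varepsilon}(x,z)(\varepsilon r+F(X))-f(x)$$
strictly monotone in $r$ with rate at least $c(\varepsilon)\varepsilon$. These are precisely the ingredients required to feed the Crandall-Ishii-Lions machinery.

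Arguing by contradiction, I assume $M:=\sup_{\bar{\Omega}}(w_{1}-w_{2})>0$; since $w_{1}\le w_{2}$ on $\partial\Omega$, the maximum is attained at an interior point. For $j\in\mathbb{N}$ large, I maximize $\Phi_{j}(x,y):=w_{1}(x)-w_{2}(y)-\tfrac{j}{2}\snr{x-y}^{2}$ over $\bar{\Omega}\times\bar{\Omega}$ at some $(x_{j},y_{j})$; the standard consequences are $j\snr{x_{j}-y_{j}}^{2}\to 0$, $(x_{j},y_{j})\to(\hat{x},\hat{x})$ with $\hat{x}\in\Omega$, and $w_{1}(x_{j})-w_{2}(y_{j})\to M$. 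Applying Proposition \ref{p1} with parameter $\delta>0$, I obtain $p_{j}=j(x_{j}-y_{j})$ and matrices $X_{j},Y_{j}\in\mathcal{S}(n)$ realizing a limiting superjet of $w_{1}$ at $x_{j}$ and a limiting subjet of $w_{2}$ at $y_{j}$, together with $X_{j}\le Y_{j}+O(\delta)$ and the viscosity inequalities
$$G_{\varepsilon}(x_{j},p_{j})(\varepsilon w_{1}(x_{j})+F(X_{j}))\le f(x_{j}),\qquad G_{\varepsilon}(y_{j},p_{j})(\varepsilon w_{2}(y_{j})+F(Y_{j}))\ge f(y_{j}).$$

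Subtracting the two inequalities and splitting
$$G_{\varepsilon}(x_{j},p_{j})F(X_{j})-G_{\varepsilon}(y_{j},p_{j})F(Y_{j})=G_{\varepsilon}(x_{j},p_{j})(F(X_{j})-F(Y_{j}))+(G_{\varepsilon}(x_{j},p_{j})-G_{\varepsilon}(y_{j},p_{j}))F(Y_{j}),$$
ellipticity \eqref{ell} combined with $X_{j}\le Y_{j}+O(\delta)$ yields $F(X_{j})-F(Y_{j})\ge -O(\delta)$, while continuity of $G_{\varepsilon}(\cdot,p)$ in $x$, boundedness of $w_{1},w_{2}$, and continuity of $f$ drive $(G_{\varepsilon}(x_{j},p_{j})-G_{\varepsilon}(y_{j},p_{j}))F(Y_{j})$, $\varepsilon w_{2}(y_{j})(G_{\varepsilon}(x_{j},p_{j})-G_{\varepsilon}(y_{j},p_{j}))$, and $f(x_{j})-f(y_{j})$ to zero as $\delta\to 0$ and $j\to\infty$. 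Strict properness then delivers $c(\varepsilon)\varepsilon M\le 0$, contradicting $M>0$.

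The main obstacle is the cross-term $(G_{\varepsilon}(x_{j},p_{j})-G_{\varepsilon}(y_{j},p_{j}))F(Y_{j})$: the coefficient modulus in $x$ is Lipschitz (from mollification) but grows polynomially in $\snr{p_{j}}$ through the three powers $\ell_{\varepsilon}(z)^{p_{\varepsilon;v}},\ell_{\varepsilon}(z)^{q},\ell_{\varepsilon}(z)^{s}$, whereas $\snr{F(Y_{j})}\le c\nr{Y_{j}}$ is controlled by the Ishii matrix bound, which worsens with $j$. The $\varepsilon$-regularization, together with the strict lower bound $c(\varepsilon)\varepsilon>0$ furnished by the $\varepsilon u_{\varepsilon}$ term, provides exactly the slack needed to absorb this residual error; if necessary, one can sharpen the penalty from $\tfrac{j}{2}\snr{x-y}^{2}$ to a higher-power kernel to tighten the matrix estimate.
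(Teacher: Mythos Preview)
Your overall architecture (doubling of variables, Ishii--Lions, exploiting the strict properness coming from the $\varepsilon u$ term) matches the paper's. The gap is exactly the cross-term you flag at the end. Your claim that $(G_{\varepsilon}(x_{j},p_{j})-G_{\varepsilon}(y_{j},p_{j}))F(Y_{j})\to 0$ is not justified and is in general false: from $j\snr{x_{j}-y_{j}}^{2}\to 0$ you get no bound on $\snr{p_{j}}=j\snr{x_{j}-y_{j}}$ (it can diverge like $j^{1/2}$), so each of the three pieces of $G_{\varepsilon}(x_{j},p_{j})-G_{\varepsilon}(y_{j},p_{j})$---most dangerously the variable-exponent piece $\ell_{\varepsilon}(p_{j})^{p_{\varepsilon;v}(x_{j})}-\ell_{\varepsilon}(p_{j})^{p_{\varepsilon;v}(y_{j})}$---carries an uncontrolled factor $\ell_{\varepsilon}(p_{j})^{\max(p^{+},p_{-},q,s)}$, and this is then multiplied by $\snr{F(Y_{j})}\lesssim \nr{Y_{j}}\sim j$. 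Lipschitz continuity of the mollified coefficients only buys a factor $\snr{x_{j}-y_{j}}$, which is not enough to kill a product growing polynomially in $j$. The suggestion to pass to a higher-power penalty does not resolve this balance either.

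The paper avoids the cross-term altogether by dividing each viscosity inequality by its own $G_{\varepsilon}$ \emph{before} subtracting. Since $G_{\varepsilon}(x,z)\ge \ell_{\varepsilon}(z)^{p_{\varepsilon;v}(x)}\ge \varepsilon^{\,p^{+}+p_{-}+1}>0$ (and similarly $G_{\varepsilon}\ge a_{\varepsilon;v}\ell_{\varepsilon}^{q}$, $G_{\varepsilon}\ge b_{\varepsilon;v}\ell_{\varepsilon}^{s}$), division is legitimate, and after subtraction the second-order terms combine to $F(X_{\delta})-F(Y_{\delta})\ge 0$ by $X_{\delta}\le Y_{\delta}$ and \eqref{ell}---no $F(Y_{\delta})$ survives. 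One is left with
\[
\varepsilon\omega_{0}\le \frac{f(x_{\sigma})}{G_{\varepsilon}(x_{\sigma},p_{\sigma})}-\frac{f(y_{\sigma})}{G_{\varepsilon}(y_{\sigma},p_{\sigma})},
\]
and now the large powers of $\ell_{\varepsilon}(p_{\sigma})$ sit in the \emph{denominator}: each summand of $G_{\varepsilon}$ dominates the matching piece of the numerator difference, producing bounds like $\snr{p_{\varepsilon;v}(x_{\sigma})-p_{\varepsilon;v}(y_{\sigma})}/\varepsilon^{c}$, $\snr{a_{\varepsilon;v}(x_{\sigma})-a_{\varepsilon;v}(y_{\sigma})}/\varepsilon^{c}$, $\snr{f(x_{\sigma})-f(y_{\sigma})}/\varepsilon^{c}$, all of which tend to zero by continuity as $\snr{x_{\sigma}-y_{\sigma}}\to 0$. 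That is the missing idea you need.
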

\begin{proof}
By contradiction, assume that 
\begin{eqnarray}\label{1}
\omega_{0}:=\max_{x\in \Omega}\left(w_{1}(x)-w_{2}(x)\right)>0.
\end{eqnarray}
For $\sigma>0$, set
\begin{eqnarray*}
\Phi_{\sigma}(x,y):=w_{1}(x)-w_{2}(y)-\frac{\snr{x-y}^{2}}{2\sigma}
\end{eqnarray*}
and notice that, if $(x_{\sigma},y_{\sigma})\in \bar{\Omega}\times \bar{\Omega}$ is a point of maximum, i.e.: 
\begin{eqnarray}\label{2}
\max_{(x,y)\in \bar{\Omega}\times \bar{\Omega}}\Phi_{\sigma}(x,y)=\Phi_{\sigma}(x_{\sigma},y_{\sigma})\ge \omega_{0},
\end{eqnarray}
by \cite[Lemma 3.1]{cil} it is
\begin{eqnarray}\label{0}
\lim_{\sigma\to 0}\frac{\snr{x_{\sigma}-y_{\sigma}}^{2}}{\sigma}=0 \ \Longrightarrow \ \lim_{\sigma\to 0}\snr{x_{\sigma}-y_{\sigma}}=0.
\end{eqnarray}
Notice that $x_{\sigma},y_{\sigma}$ cannot both belong to $\partial \Omega$, otherwise $\Phi_{\sigma}(x_{\sigma},y_{\sigma})<0$, in contradiction with \eqref{1}-\eqref{2}. Then, at least one of them, say $x_{\sigma}$ must be in the interior of $\Omega$ and \eqref{0} forces also $y_{\sigma}$ to stay inside $\Omega$. We can then apply \cite[Theorem 3.2]{cil} to obtain that for all $\delta>0$ we have two symmetric matrices $X_{\delta},Y_{\delta}\in \mathcal{S}(n)$ so that $\left(\frac{x_{\sigma}-y_{\sigma}}{\sigma},X_{\delta}\right)$ is a superjet of $w_{1}$ in $x_{\sigma}$, $\left(\frac{x_{\sigma}-y_{\sigma}}{\sigma},Y_{\delta}\right)$ is a subjet of $w_{2}$ in $y_{\sigma}$ and the matrix inequality
\begin{eqnarray*}
\left(-\frac{1}{\delta}+c(n,\sigma)\right)\begin{bmatrix}
\mathbf{I} & 0 \\ 0 & \mathbf{I}
\end{bmatrix}\le \begin{bmatrix}
X_{\delta} & 0 \\ 0 & Y_{\delta} 
\end{bmatrix}\le\frac{3(1+\delta)}{\sigma}\begin{bmatrix}
\mathbf{I} & -\mathbf{I} \\ -\mathbf{I} & \mathbf{I}
\end{bmatrix}
\end{eqnarray*}
holds, therefore, testing against the couple $(\xi,\xi)\in \mathbb{R}^{2n}$ we get
\begin{eqnarray}\label{3}
2\left(-\frac{1}{\delta}+c(n,\sigma)\right)\snr{\xi}^{2}\le \langle(X_{\delta}-Y_{\delta})\xi,\xi\rangle\le 0 \ \Longrightarrow \ Y_{\delta}\ge X_{\delta}.
\end{eqnarray}
We can then recover the viscosity inequalities
\begin{eqnarray*}
\left\{
\begin{array}{c}
\displaystyle 
\ G_{\varepsilon}\left(x_{\sigma},\frac{x_{\sigma}-y_{\sigma}}{\sigma}\right) \left(\varepsilon w_{1}(x_{\sigma})+F(X_{\delta})\right)\le f(x_{\sigma}) \\[17pt]\displaystyle
\ G_{\varepsilon}\left(y_{\sigma},\frac{x_{\sigma}-y_{\sigma}}{\sigma}\right)\left(\varepsilon w_{2}(y_{\sigma})+F(Y_{\delta})\right)\ge f(y_{\sigma}),
\end{array}
\right.
\end{eqnarray*}
and subtract the second from the first to get
\begin{eqnarray}\label{66}
\frac{f(x_{\sigma})}{G_{\varepsilon}\left(x_{\sigma},\frac{x_{\sigma}-y_{\sigma}}{\sigma}\right)}-\frac{f(x_{\sigma})}{G_{\varepsilon}\left(y_{\sigma},\frac{x_{\sigma}-y_{\sigma}}{\sigma}\right)}&\ge&\varepsilon(w_{1}(x_{\sigma})-w_{2}(y_{\sigma}))+F(X_{\delta})-F(Y_{\delta})\nonumber \\
&\stackrel{\eqref{ell}}{\ge} & \varepsilon(w_{1}(x_{\sigma})-w_{2}(y_{\sigma}))+\lambda\texttt{tr}(Y_{\delta}-X_{\delta})\nonumber \\
& \stackrel{\eqref{3}}{\ge} & \varepsilon(w_{1}(x_{\sigma})-w_{2}(y_{\sigma}))\stackrel{\eqref{2}}{\ge}\varepsilon\omega_{0}.
\end{eqnarray}
At this point, recall that for any $\iota_{0}>0$ there exists a constant $c\equiv c(\iota_{0})$ such that for all $t\ge 0$, $l, m\ge 0$ it holds that $\snr{t^{l}-t^{m}}\le c\snr{l-m}(1+t^{(1+\iota_{0})\max\{l,m\}})$, so choosing $\iota_{0}:=\frac{\varepsilon}{16(p^{+}+p_{-}+1)}$ we see that
\begin{eqnarray*}
\begin{cases}
\ \iota_{0}\max\{p_{\varepsilon;v}(x_{\sigma}),p_{\varepsilon;v}(y_{\sigma})\}-\min\{p_{\varepsilon;v}(x_{\sigma}),p_{\varepsilon;v}(y_{\sigma})\}<-\frac{15\varepsilon}{16(p^{+}+p_{-}+1)}\\
\ \snr{\iota_{0}\max\{p_{\varepsilon;v}(x_{\sigma}),p_{\varepsilon;v}(y_{\sigma})\}-\min\{p_{\varepsilon;v}(x_{\sigma}),p_{\varepsilon;v}(y_{\sigma})\}}\le 4(p^{+}+p_{-}+1)
\end{cases}
\end{eqnarray*}
and so
\begin{eqnarray}\label{80}
\mathcal{L}(\varepsilon,\sigma)&:=&\ell_{\varepsilon}\left(\frac{x_{\sigma}-y_{\sigma}}{\sigma}\right)^{-(p_{\varepsilon;v}(x_{\sigma})+p_{\varepsilon;v}(y_{\sigma}))}\left| \ \ell_{\varepsilon}\left(\frac{x_{\sigma}-y_{\sigma}}{\sigma}\right)^{p_{\varepsilon;v}(x_{\sigma})}- \ell_{\varepsilon}\left(\frac{x_{\sigma}-y_{\sigma}}{\sigma}\right)^{p_{\varepsilon;v}(y_{\sigma})}\ \right|\nonumber \\
&\le&\snr{p_{\varepsilon;v}(x_{\sigma})-p_{\varepsilon;v}(y_{\sigma})}\ell_{\varepsilon}\left(\frac{x_{\sigma}-y_{\sigma}}{\sigma}\right)^{-(p_{\varepsilon;v}(x_{\sigma})+p_{\varepsilon;v}(y_{\sigma}))}\left[1+\ell_{\varepsilon}\left(\frac{x_{\sigma}-y_{\sigma}}{\sigma}\right)^{(1+\iota_{0})\hat{p}_{\varepsilon;\sigma}}\right]\nonumber \\
& \le& \snr{p_{\varepsilon;v}(x_{\sigma})-p_{\varepsilon;v}(y_{\sigma})}\left[\varepsilon^{-6\max\{p^{+},p_{-},1\}}+\ell_{\varepsilon}\left(\frac{x_{\sigma}-y_{\sigma}}{\sigma}\right)^{\iota_{0}\hat{p}_{\varepsilon;\sigma}-\ti{p}_{\varepsilon;\sigma}}\right]\nonumber \\
&\le&\frac{2\snr{p_{\varepsilon;v}(x_{\sigma})-p_{\varepsilon;v}(y_{\sigma})}}{\varepsilon^{6(p^{+}+p_{-}+1)}},
\end{eqnarray}
where we set $\hat{p}_{\varepsilon;\sigma}:=\max\{p_{\varepsilon;v}(x_{\sigma}),p_{\varepsilon;v}(y_{\sigma})\}$ and $\ti{p}_{\varepsilon;\sigma}:=\min\{p_{\varepsilon;v}(x_{\sigma}),p_{\varepsilon;v}(y_{\sigma})\}$. Via \eqref{assf}, \eqref{f}, \eqref{80} and using that $a_{\varepsilon;v}(\cdot),b_{\varepsilon;v}(\cdot)\ge \varepsilon$ and $\ell_{\varepsilon}(\sigma^{-1}(x_{\sigma}-y_{\sigma}))\ge \varepsilon$, we manipulate \eqref{66} to obtain
\begin{eqnarray*}
\frac{2\nr{f}_{L^{\infty}(\Omega)}\snr{p_{\varepsilon;v}(x_{\sigma})-p_{\varepsilon;v}(y_{\sigma})}}{\varepsilon^{6(p^{+}+p_{-}+1)}}&+&\frac{\nr{f}_{L^{\infty}(\Omega)}\snr{a_{\varepsilon;v}(x_{\sigma})-a_{\varepsilon;v}(y_{\sigma})}}{\varepsilon^{2q}}\nonumber \\
&+&\frac{\nr{f}_{L^{\infty}(\Omega)}\snr{b_{\varepsilon;v}(x_{\sigma})-b_{\varepsilon;v}(y_{\sigma})}}{\varepsilon^{2s}}+\frac{\snr{f(x_{\sigma})-f(y_{\sigma})}}{\varepsilon^{2(p^{+}+p_{-}+1)}}\nonumber \\
&\ge&\nr{f}_{L^{\infty}(\Omega)}\mathcal{L}(\varepsilon,\sigma)+\frac{\nr{f}_{L^{\infty}(\Omega)}\snr{a_{\varepsilon;v}(x_{\sigma})-a_{\varepsilon;v}(y_{\sigma})}}{a_{\varepsilon;v}(x_{\sigma})a_{\varepsilon;v}(y_{\sigma})\ell_{\varepsilon}\left(\frac{x_{\sigma}-y_{\sigma}}{\sigma}\right)^{q}}\nonumber \\
&+&\frac{\nr{f}_{L^{\infty}(\Omega)}\snr{b_{\varepsilon;v}(x_{\sigma})-b_{\varepsilon;v}(y_{\sigma})}}{b_{\varepsilon;v}(x_{\sigma})b_{\varepsilon;v}(y_{\sigma})\ell_{\varepsilon}\left(\frac{x_{\sigma}-y_{\sigma}}{\sigma}\right)^{s}}+\frac{\snr{f(x_{\sigma})-f(y_{\sigma})}}{G_{\varepsilon}\left(y_{\sigma},\frac{x_{\sigma}-y_{\sigma}}{\sigma}\right)}\nonumber \\
&\ge&\frac{f(x_{\sigma})}{G_{\varepsilon}\left(x_{\sigma},\frac{x_{\sigma}-y_{\sigma}}{\sigma}\right)}-\frac{f(y_{\sigma})}{G_{\varepsilon}\left(y_{\sigma},\frac{x_{\sigma}-y_{\sigma}}{\sigma}\right)}\nonumber \\
&\ge&\varepsilon\omega_{0},
\end{eqnarray*}
therefore it is
\begin{flalign}\label{4}
\frac{2\nr{f}_{L^{\infty}(\Omega)}\snr{p_{\varepsilon;v}(x_{\sigma})-p_{\varepsilon;v}(y_{\sigma})}}{\varepsilon^{6(p^{+}+p_{-}+1)+1}}&+\frac{\snr{f(x_{\sigma})-f(y_{\sigma})}}{\varepsilon^{2(p^{+}+p_{-}+1)}}\nonumber \\
&+\nr{f}_{L^{\infty}(\Omega)}\left[\frac{\snr{a_{\varepsilon;v}(x_{\sigma})-a_{\varepsilon;v}(y_{\sigma})}}{\varepsilon^{2q+1}}+\frac{\snr{b_{\varepsilon;v}(x_{\sigma})-b_{\varepsilon;v}(y_{\sigma})}}{\varepsilon^{2s+1}}\right]\ge \omega_{0}.
\end{flalign}
Recalling the $f(\cdot)$, $p_{\varepsilon;v}(\cdot)$ $a_{\varepsilon;v}(\cdot)$ and $b_{\varepsilon;v}(\cdot)$ are continuous and that $\snr{x_{\sigma}-y_{\sigma}}\to 0$ by \eqref{0}, we can send $\sigma\to 0$ in \eqref{4} to reach a contradiction with \eqref{1}. The proof is complete.
\end{proof}
At this stage, we need to construct continuous viscosity subsolutions and supersolutions of \eqref{eqe} with a fixed boundary datum. 
\begin{lemma}\label{subsux}
Let $\Omega\subset \mathbb{R}^{n}$ be an open, bounded domain satisfying a uniform exterior sphere condition. Assume \texttt{set} and let $g\in C(\partial \Omega)$ be any function with modulus of continuity $\omega_{g}(\cdot)$. Then equation \eqref{eqe} admits a viscosity supersolution $\overline{w}\in C(\bar{\Omega})$ and a viscosity subsolution $\underline{w}\in C(\bar{\Omega})$ for all numbers $\varepsilon\in (0,1)$, maps $v\in C(\bar{\Omega})$ so that $\left.\underline{w}\right|_{\partial\Omega}=\left.\overline{w}\right|_{\partial \Omega}=g$.
\end{lemma}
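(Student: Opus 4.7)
The plan is to build $\overline{w}$ and $\underline{w}$ as envelopes of classical barriers tailored to each boundary point via the uniform exterior sphere condition. Observe first that the $\varepsilon$-regularization in \eqref{abe} forces $G_{\varepsilon}(x,z) \ge c_{\varepsilon} > 0$ on $\Omega\times\mathbb{R}^{n}$, so \eqref{eqe} is effectively a uniformly elliptic PDE $\varepsilon u + F(D^{2}u) = f(x)/G_{\varepsilon}(x,Du)$ with bounded right-hand side, the natural habitat for Perron-type barrier arguments.

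First I would set up the pointwise barrier. For each $y \in \partial\Omega$, let $B_{r_{0}}(z_{y})$ be the exterior ball from the uniform sphere condition (with $r_{0}$ independent of $y$) and define
\[
\psi_{y}(x) := r_{0}^{-\gamma} - |x-z_{y}|^{-\gamma}, \qquad \gamma > \gamma_{0}(n,\lambda,\Lambda).
\]
A direct computation of the eigenvalues of $D^{2}\psi_{y}$ (one radial equal to $-\gamma(\gamma+1)|x-z_{y}|^{-\gamma-2}$, and $n-1$ tangential equal to $\gamma|x-z_{y}|^{-\gamma-2}$), combined with \eqref{m-}, yields
\[
\mathcal{M}^{-}_{\lambda,\Lambda}(D^{2}\psi_{y})(x) = \gamma|x-z_{y}|^{-\gamma-2}\bigl[\lambda(\gamma+1)-\Lambda(n-1)\bigr] \ge c_{0} > 0
\]
uniformly on $\bar{\Omega}$ for $\gamma$ large enough, after which \eqref{elll} delivers $F(KD^{2}\psi_{y}) \ge Kc_{0}$ for every $K>0$. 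Clearly $\psi_{y} \ge 0$ on $\bar{\Omega}$ with $\psi_{y}(y)=0$.

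Next, for each $y\in\partial\Omega$ and $\eta\in(0,1)$, I would form
\[
\overline{w}_{y,\eta}(x) := g(y) + \eta + K\psi_{y}(x), \qquad \underline{w}_{y,\eta}(x) := g(y) - \eta - K\psi_{y}(x),
\]
with $K = K(\varepsilon,\eta,\|g\|_{\infty},\|f\|_{\infty},\omega_{g},c_{0},c_{\varepsilon})$ large enough so that: (a) $\overline{w}_{y,\eta} \ge g \ge \underline{w}_{y,\eta}$ on $\partial\Omega$, by exploiting the modulus $\omega_{g}$ in the range $|x-y|<\delta(\eta)$ and the positive lower bound on $\psi_{y}$ in the complement; (b) $\overline{w}_{y,\eta}$ and $\underline{w}_{y,\eta}$ are, respectively, classical supersolution and subsolution of \eqref{eqe}, since $F(\pm KD^{2}\psi_{y})$ dominates, through $Kc_{0}$, the $\varepsilon u$ contribution and the bounded quantity $f/G_{\varepsilon}$. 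Setting
\[
\overline{w}(x) := \inf_{y,\eta}\overline{w}_{y,\eta}(x), \qquad \underline{w}(x) := \sup_{y,\eta}\underline{w}_{y,\eta}(x),
\]
standard stability of viscosity inequalities under infima/suprema gives that $\overline{w}$ and $\underline{w}$ are viscosity super- and subsolutions of \eqref{eqe}, and direct evaluation at boundary points yields $\overline{w}|_{\partial\Omega} = \underline{w}|_{\partial\Omega} = g$.

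The hard part will be upgrading the a priori upper (resp.\ lower) semicontinuity of $\overline{w}$ (resp.\ $\underline{w}$) to full continuity on $\bar{\Omega}$. This is achieved by combining the comparison Lemma \ref{comp}, which yields $\underline{w} \le \overline{w}$ on $\bar{\Omega}$, with the boundary identity $\underline{w} = \overline{w} = g$ on $\partial\Omega$ to squeeze out continuity at boundary points; interior continuity is obtained by exploiting the joint equicontinuity of the family $\{\overline{w}_{y,\eta}\}$ in $y$ (the constant $K$ depending on $\eta$ and on global data, but not on $y$), so that $\overline{w}$ and $\underline{w}$ inherit a modulus of continuity from those of $g$ and $y\mapsto\psi_{y}$.
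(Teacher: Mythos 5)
Your construction is essentially the same as the paper's: for each boundary point $y$ you build a barrier $\psi_{y}(x)=r_{0}^{-\gamma}-|x-z_{y}|^{-\gamma}$ from the uniform exterior sphere condition, verify uniform superellipticity via \eqref{m-}/\eqref{elll}, lift it to $g(y)+\eta+K\psi_{y}$, and then take an infimum over $y$ and $\eta$. The paper does the same thing (with $\tau$ in place of $\eta$ and $\Gamma_{\tau}$ in place of $K$), so the core idea matches.

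Two differences deserve comment. First, the paper additionally introduces the global quadratic supersolution $\tilde{w}(x)=\Gamma_{2}-\Gamma_{1}|x-x_{0}|^{2}(2\lambda n)^{-1}$ and takes $\tilde{w}_{y;\tau}=\min\{\tilde{w},w_{y;\tau}\}$ before passing to the infimum. This truncation is not decorative: since $\Gamma_{\tau}\to\infty$ as $\tau\to 0$ (to absorb the modulus $\omega_{g}$), the pointwise barriers $w_{y;\tau}$ are not equicontinuous in $\tau$, and the $\min$ with the fixed continuous cap $\tilde{w}$ is the mechanism that keeps the infimum controlled uniformly in the interior. Your sketch omits this cap and then appeals to ``joint equicontinuity of the family $\{\overline{w}_{y,\eta}\}$ in $y$''; but equicontinuity in $y$ alone does not suffice, because $K\equiv K(\eta,\ldots)$ blows up as $\eta\to 0$, destroying equicontinuity across $\eta$. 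The correct interior continuity argument is rather that for $x$ at distance $d>0$ from $\partial\Omega$ one has $\psi_{y}(x)\ge c(d)>0$, so barriers with $\eta$ too small give a value at $x$ exceeding $\tilde w(x)$ and hence do not contribute to the infimum; this restricts the effective range of $\eta$ to a compact interval where the family is equicontinuous. That is exactly what the $\min$ with $\tilde w$ encodes.

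Second, invoking Lemma~\ref{comp} to ``squeeze out continuity'' has a semicontinuity mismatch: an infimum of continuous supersolutions is a priori \emph{upper} semicontinuous, and a supremum of continuous subsolutions is a priori \emph{lower} semicontinuous, whereas Lemma~\ref{comp} requires the subsolution to be in $USC(\bar\Omega)$ and the supersolution in $LSC(\bar\Omega)$ --- the opposite inclusions. You cannot apply the comparison lemma to $\underline w$ and $\overline w$ until you have already upgraded their semicontinuity, which is the very thing you are trying to prove; the argument as stated is circular. The continuity should instead be obtained directly from the barrier geometry as described above (the paper cites \cite{ckls,hprs} for this, and your own analysis at boundary points is on the right track once the ``effective range of $\eta$'' observation is in place). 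Apart from these two points the proposal is faithful to the paper's strategy.
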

\begin{proof}
The proof closely follows that of \cite[Lemma 2]{hprs}, see also \cite[Proposition 3.2]{ckls}. We construct a continuous viscosity supersolution $\overline{w}$ to \eqref{eqe} agreeing with $g$ on $\partial \Omega$ for any $\varepsilon\in (0,1)$ and all function $v\in C(\bar{\Omega})$. The construction of a subsolution $\underline{w}$ with analogous features can be obtained in a similar way. Let $x_{0}\in \mathbb{R}^{n}$ be any point with $\dist(x_{0},\partial\Omega)\ge 1$, set
$$
\Gamma_{1}:=\max\{\nr{f}_{L^{\infty}(\Omega)},\lambda n\},\qquad \Gamma_{2}:=\frac{16(1+\diam(\Omega))^{2}\Gamma_{1}}{\lambda n}+\nr{g}_{L^{\infty}(\partial \Omega)}
$$
and define the function $$\ti{w}(x):=\Gamma_{2}-\Gamma_{1}\snr{x-x_{0}}^{2}(2\lambda n)^{-1}.$$
The choice of $\Gamma_{1}$ and $\Gamma_{2}$ yields that $\snr{D\ti{w}}\ge 1$, $\ti{w}\ge 0$ in $\Omega$ and that $\left.\ti{w}\right|_{\partial \Omega}\ge \nr{g}_{L^{\infty}(\partial \Omega)}$. Now, notice that $F(D^{2}\ti{w})\ge 0$, in fact, being the identity positive definite it is
\begin{eqnarray*}
F(D^{2}\ti{w})\stackrel{\eqref{assf}_{2}}{=}F\left(-\frac{\Gamma_{1}\mathbf{I}}{\lambda n}\right)-F(0)\stackrel{\eqref{ell}}{\ge}\lambda \texttt{tr}\left(\frac{\Gamma_{1}\mathbf{I}}{\lambda n}\right)\ge \Gamma_{1},
\end{eqnarray*}
so for all $x\in \Omega$ it is
\begin{eqnarray*}
\left[\ell_{\varepsilon}(D\ti{w})^{p_{\varepsilon;v}(x)}+a_{\varepsilon;v}(x)\ell_{\varepsilon}(D\ti{w})^{q}+b_{\varepsilon;v}(x)\ell_{\varepsilon}(D\ti{w})^{s}\right]\left(\varepsilon \ti{w}+F(D^{2}\ti{w})\right)\ge F(D^{2}\ti{w})\ge f(x),
\end{eqnarray*}
because of the very definition of $\Gamma_{1}$. Let $r_{*}\equiv r_{*}(\partial \Omega)>0$ be the radius provided by the uniform exterior sphere condition, $y\in \partial \Omega$ be any point and $x_{y}\in \mathbb{R}^{n}$ be so that $\snr{y-x_{y}}=r_{*}$ and $\bar{B}_{r_{*}}(x_{y})\cap \bar{\Omega}=\{y\}$. Let $\ti{r}:=r_{*}+\diam(\Omega)$, $\gamma>\max\left\{2,\frac{1}{\lambda}+n\frac{\Lambda}{\lambda}\right\}$, $L>0$ and $$w_{y}(x):=L(r_{*}^{-\gamma}-\snr{x-x_{y}}^{-\gamma}).$$ By construction it is $w_{y}(y)=0$, $w_{y}(x)\ge 0$ for $x\in \Omega$ and
\begin{eqnarray*}
Dw_{y}(x):=L\gamma\frac{x-x_{y}}{\snr{x-x_{y}}^{\gamma+2}},\qquad D^{2}w_{y}=\frac{L\gamma}{\snr{x-x_{y}}^{\gamma+2}}\left[\mathbf{I}-(\gamma+2)\frac{(x-x_{y})\otimes (x-x_{y})}{\snr{x-x_{y}}^{2}}\right],
\end{eqnarray*}
so we can control from below
\begin{eqnarray}\label{7.1}
\snr{Dw_{y}}\ge L\gamma\ti{r}^{-(\gamma+1)}\quad \mbox{in} \ \ \Omega
\end{eqnarray}
and
\begin{eqnarray*}
F(D^{2}w_{y})&\stackrel{\eqref{assf}_{2}}{=}&\left(F(D^{2}w_{y})-F\left(\frac{L\gamma\mathbf{I}}{\snr{x-x_{y}}^{\gamma+2}}\right)\right)+\left(F\left(\frac{L\gamma\mathbf{I}}{\snr{x-x_{y}}^{\gamma+2}}\right)-F(0)\right)\nonumber \\
&\stackrel{\eqref{ell}}{\ge}&\frac{L\gamma\lambda(\gamma+2)}{\snr{x-x_{y}}^{\gamma+4}}\texttt{tr}\left((x-x_{y})\otimes (x-x_{y})\right)-\frac{L\Lambda\gamma}{\snr{x-x_{y}}^{\gamma+2}}\texttt{tr}(\mathbf{I})\nonumber \\
&=&\frac{L\gamma(\lambda(\gamma+2)-n\Lambda)}{\snr{x-x_{y}}^{\gamma+2}}\ge \frac{L\gamma}{\snr{x-x_{y}}^{\gamma+2}},
\end{eqnarray*}
where also used the lower bound imposed on $\gamma$. We stress that the restrictions imposed on the size of $\gamma$ yield that $\gamma\equiv \gamma(n,\lambda,\Lambda)$. At this stage, we select $L>0$ so that
\begin{eqnarray}\label{7}
\frac{L\gamma}{\ti{r}^{\gamma+1}}\ge 1\qquad \mbox{and}\qquad \frac{L\gamma}{\ti{r}^{2+\gamma}}\ge \nr{f}_{L^{\infty}(\Omega)}+\nr{g}_{L^{\infty}(\partial\Omega)},
\end{eqnarray}
thus fixing the dependency $L\equiv L(n,\lambda,\lambda,\nr{f}_{L^{\infty}(\Omega)},\nr{g}_{L^{\infty}(\partial \Omega)},\partial \Omega,\diam(\Omega))$. Now, let $\tau\in (0,1)$ be any number and define the function
\begin{eqnarray*}
w_{y;\tau}(x):=g(y)+\tau+\Gamma_{\tau}w_{y}(x),
\end{eqnarray*}
where $\Gamma_{\tau}\ge1$ is selected in such a way that $w_{y;\tau}(x)\ge g(x)$ for $x\in \partial \Omega$. This can be done by defining
\begin{eqnarray*}
\Gamma_{\tau}:=4\left(\sup_{x\in \partial \Omega, x\not =y}\frac{\left(\omega_{g}(\snr{x-y})-\tau\right)_{+}}{w_{y}(x)}\right)+1.
\end{eqnarray*}
The uniform sphere condition imposed on $\partial \Omega$ yields that $\Gamma_{\tau}$ does not depend on $y\in \partial \Omega$. We then estimate using the very definition of $w_{y;\tau}(\cdot)$, \eqref{7.1} and \eqref{7},
\begin{flalign*}
&\left[\ell_{\varepsilon}(Dw_{y;\tau})^{p_{\varepsilon;v}(x)}+a_{\varepsilon;v}(x)\ell_{\varepsilon}(Dw_{y;\tau})^{q}+b_{\varepsilon;v}(x)\ell_{\varepsilon}(Dw_{y;\tau})^{s}\right]\left(\varepsilon w_{y;\tau}+F(D^{2}w_{y;\tau})\right)\nonumber \\
&\qquad \qquad \qquad \ge -\nr{g}_{L^{\infty}(\partial \Omega)}+\Gamma_{\tau}\left(\nr{f}_{L^{\infty}(\Omega)}+\nr{g}_{L^{\infty}(\partial \Omega)}\right)\ge \nr{f}_{L^{\infty}(\Omega)}\ge f(x),
\end{flalign*}
which means that $w_{y;\tau}$ is a viscosity supersolution of equation \eqref{eqe} for all $y\in \partial \Omega$ and all $\tau\in (0,1)$, and, as a consequence, the map $\ti{w}_{y;\tau}:=\min\left\{\ti{w},w_{y;\tau}\right\}$ is a viscosity supersolution of \eqref{eqe}. Finally, setting
\begin{eqnarray*}
\overline{w}(x):=\inf\left\{\ti{w}_{y;\tau}(x)\colon y\in \partial \Omega, \ \tau\in (0,1)\right\}
\end{eqnarray*}
we obtain the required viscosity supersolution to \eqref{eqe} agreeing with $g(\cdot)$ on $\partial \Omega$.
\end{proof}
As a consequence of the two above lemmas, we obtain the existence of a continuous viscosity solution to equation \eqref{eqe}.
\begin{corollary}\label{c1}
Let $\Omega\subset \mathbb{R}^{n}$ be an open, bounded domain satisfying the uniform sphere condition and assume \texttt{set}. Then, for any $g\in C(\partial \Omega)$ and $v\in C(\bar{\Omega})$ there exists a viscosity solution $u_{\varepsilon}\in C(\bar{\Omega})$ to equation \eqref{eqe} so that $\underline{w}\le u_{\varepsilon}\le \overline{w}$, where $\underline{w}$, $\overline{w}$ are respectively the subsolution and the supersolution constructed in Lemma \ref{subsux}. In particular, whenever $\Omega'\Subset \Omega$ is an open set it holds that
\begin{flalign}\label{8}
\nr{u_{\varepsilon}}_{L^{\infty}(\Omega')}\le c(n,\lambda,\Lambda,\nr{f}_{L^{\infty}(\Omega)},\nr{g}_{L^{\infty}(\partial \Omega)},\partial \Omega,\diam(\Omega),\dist(\Omega',\partial \Omega)).
\end{flalign}
\end{corollary}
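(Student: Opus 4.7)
The plan is to apply Perron's method, using Lemma \ref{comp} as the uniqueness engine and Lemma \ref{subsux} as the source of barriers. Fix $v\in C(\bar\Omega)$, $g\in C(\partial\Omega)$ and $\varepsilon\in(0,1)$, and let $\underline{w},\overline{w}$ be the continuous viscosity sub/supersolutions of \eqref{eqe} produced by Lemma \ref{subsux}, both matching $g$ on $\partial\Omega$. Define the Perron class
\begin{flalign*}
\mathcal{S}_\varepsilon:=\bigl\{w\in USC(\bar\Omega)\colon w \text{ is a viscosity subsolution of }\eqref{eqe},\ \underline{w}\le w\le \overline{w}\text{ in }\bar\Omega\bigr\},
\end{flalign*}
which is nonempty since $\underline{w}\in\mathcal{S}_\varepsilon$. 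The comparison principle of Lemma \ref{comp} forces $w\le \overline{w}$ on $\bar\Omega$ for any $w\in\mathcal{S}_\varepsilon$ (the boundary values already agree via the sandwich), so the pointwise supremum
\begin{flalign*}
u_\varepsilon(x):=\sup\bigl\{w(x)\colon w\in \mathcal{S}_\varepsilon\bigr\}
\end{flalign*}
is well-defined and obeys $\underline{w}\le u_\varepsilon\le \overline{w}$.

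Next I would run the standard Perron argument (as in \cite{cil,baim}) to establish that the upper semicontinuous envelope $u_\varepsilon^\ast$ is a viscosity subsolution of \eqref{eqe}, whereas the lower semicontinuous envelope $(u_\varepsilon)_\ast$ is a viscosity supersolution. The subsolution part is automatic from the fact that suprema of subsolutions are subsolutions after envelope-taking, using the continuity of $(x,z,M)\mapsto G_\varepsilon(x,z)(\varepsilon r+F(M))-f(x)$ inherited from \texttt{set} and the smoothness of $p_{\varepsilon;v}$, $a_{\varepsilon;v}$, $b_{\varepsilon;v}$. The supersolution part is the usual bump lemma: if $(u_\varepsilon)_\ast$ failed to be a supersolution at an interior point $x_0$, one could construct a strict $C^2$ test function, perturb $u_\varepsilon$ upward inside a small ball by $\delta$, and splice to obtain a new member of $\mathcal{S}_\varepsilon$ strictly exceeding $u_\varepsilon$ near $x_0$ — contradicting the definition of the supremum. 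Since $\underline{w}\le u_\varepsilon\le \overline{w}$ with $\underline{w},\overline{w}\in C(\bar\Omega)$ both equal to $g$ on $\partial\Omega$, we have $u_\varepsilon^\ast=(u_\varepsilon)_\ast=g$ on $\partial\Omega$.

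Now invoke Lemma \ref{comp} to $u_\varepsilon^\ast$ (a USC subsolution) and $(u_\varepsilon)_\ast$ (a LSC supersolution), both with the same boundary datum $g$: it yields $u_\varepsilon^\ast\le (u_\varepsilon)_\ast$ on $\bar\Omega$. The reverse inequality holds by the very definition of upper/lower envelopes, hence
\begin{flalign*}
u_\varepsilon^\ast=(u_\varepsilon)_\ast=u_\varepsilon\in C(\bar\Omega)
\end{flalign*}
and $u_\varepsilon$ is a continuous viscosity solution of \eqref{eqe} that coincides with $g$ on $\partial\Omega$. The quantitative bound \eqref{8} is then a direct consequence of the sandwich $\underline{w}\le u_\varepsilon\le \overline{w}$, since Lemma \ref{subsux} exhibits $\underline{w},\overline{w}$ explicitly: $\overline{w}\le \tilde w$ is controlled by $\Gamma_2$, and $\underline{w}$ admits an analogous bound from below, both depending only on the parameters listed in \eqref{8}.

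The main obstacle is really concentrated in the supersolution step of Perron's argument: one must check that the local perturbation producing a competitor in $\mathcal{S}_\varepsilon$ still satisfies the subsolution inequality for the specific degenerate operator $G_\varepsilon(x,Du)(\varepsilon u+F(D^2u))$. This works because the coefficient $G_\varepsilon(x,Du)\ge \varepsilon^{p_{\varepsilon;v}(x)+1}\wedge \varepsilon^{q+1}\wedge \varepsilon^{s+1}>0$ is bounded away from zero (recall \eqref{abe} and the additive $\varepsilon$ in $\ell_\varepsilon$), so the equation is locally uniformly elliptic once $\varepsilon>0$ is fixed and the standard bump construction for \cite[Theorem 4.1]{cil} goes through verbatim. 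Everything else amounts to bookkeeping the already-available comparison and barriers.
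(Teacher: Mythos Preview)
Your approach is correct and is essentially the same as the paper's: the paper's one-line proof simply invokes \cite[Theorem~4.1]{cil} (Perron's method) together with Lemmas~\ref{comp} and~\ref{subsux}, which is precisely what you have unpacked in detail. Your remarks on the nondegeneracy of $G_\varepsilon$ are slightly more than what is needed (properness in the sense of \cite{cil}, guaranteed by the $\varepsilon u$ term and the ellipticity of $F$, already suffices for the Perron argument), but they do no harm.
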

\begin{proof}
The proof immediately follows by combining \cite[Theorem 4.1]{cil} with Lemmas \ref{comp}-\ref{subsux}. 
\end{proof}
Now we are ready to show the existence of a viscosity solution of Dirichlet problem
\begin{flalign}\label{pdv}
\begin{cases}
\ \left[\snr{Du_{v}}^{p_{v}(x)}+a(x)\mathds{1}_{\{v>0\}}\snr{Du_{v}}^{q}+b(x)\mathds{1}_{\{v<0\}}\snr{Du_{v}}^{s}\right]F(D^{2}u_{v})=f(x)\quad &\mbox{in} \ \ \Omega\\
\ u_{v}=g\quad &\mbox{on} \ \ \partial \Omega,
\end{cases}
\end{flalign}
where $v\in C(\bar{\Omega})$, $g\in C(\partial \Omega)$ and assumptions \texttt{set} are in force. 
\begin{corollary}\label{c2}
Let $\Omega\subset \mathbb{R}^{n}$ be an open, bounded domain satisfying the uniform sphere condition and assume \texttt{set}. Then, for any $g\in C(\partial \Omega)$ and $v\in C(\bar{\Omega})$, Dirichlet problem \eqref{pdv} admits a viscosity solution $u_{v}\in C(\bar{\Omega})$ so that $\left. u_{v}\right|_{\partial \Omega}=\left. g\right|_{\partial \Omega}$ and $\underline{w}\le u_{v}\le \overline{w}$, where $\underline{w}$. $\overline{w}$ are respectively the subsolution and the supersolution constructed in Lemma \ref{subsux}. In particular, whenever $\Omega'\Subset \Omega$ is an open set it holds that
\begin{flalign}\label{10}
\nr{u_{v}}_{L^{\infty}(\Omega')}\le c(n,\lambda,\Lambda,\nr{f}_{L^{\infty}(\Omega)},\nr{g}_{L^{\infty}(\partial \Omega)},\partial \Omega,\diam(\Omega),\dist(\Omega',\partial \Omega))
\end{flalign}
and, for all $\beta_{0}\in (0,1)$ it is
\begin{eqnarray}\label{12}
[u]_{0,\beta_{0};\Omega'}\le c(n,\lambda,\Lambda,\nr{f}_{L^{\infty}(\Omega)},\nr{g}_{L^{\infty}(\partial \Omega)},\partial \Omega,\diam(\Omega),\dist(\Omega',\partial \Omega),\beta_{0}).
\end{eqnarray}
\end{corollary}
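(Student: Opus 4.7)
The plan is to obtain $u_{v}$ as a locally uniform limit of the solutions $u_{\varepsilon}$ to the regularized equation \eqref{eqe} produced by Corollary \ref{c1}. By that corollary, for each $\varepsilon\in(0,1)$ there exists $u_{\varepsilon}\in C(\bar{\Omega})$ solving \eqref{eqe} and satisfying $\underline{w}\le u_{\varepsilon}\le \overline{w}$, where the barriers from Lemma \ref{subsux} are independent of $\varepsilon$ and match $g$ on $\partial\Omega$. This sandwiching immediately delivers a uniform $L^{\infty}$ bound on $u_{\varepsilon}$ depending only on the data in \eqref{10}, and it will also be the mechanism through which the boundary datum is inherited by the limit: for every $x_{0}\in\partial\Omega$ one has $\underline{w}(x_{0})=g(x_{0})=\overline{w}(x_{0})$ and both barriers are continuous, so the family $\{u_{\varepsilon}\}$ is equicontinuous at $\partial\Omega$.

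Second, I would invoke the uniform interior H\"older estimate announced in the introduction and established in Section \ref{hr} / Appendix \ref{vem}. Note that the regularized coefficients in \eqref{abe} are continuous on $\Omega$ with moduli of continuity that, although dependent on $\varepsilon$, do not enter the H\"older estimate quantitatively (this is exactly the \emph{uniformity with respect to the moduli of continuity of the exponents and coefficients} stressed there). Together with the uniform $L^{\infty}$ bound, this yields for every $\Omega'\Subset\Omega$ and every $\beta_{0}\in(0,1)$ a bound of the form \eqref{12} on $[u_{\varepsilon}]_{0,\beta_{0};\Omega'}$ independent of $\varepsilon$. By Arzel\`a--Ascoli and a diagonal extraction, a subsequence $u_{\varepsilon_{j}}\to u_{v}$ locally uniformly in $\Omega$ and continuously up to $\partial\Omega$, with $u_{v}=g$ on $\partial\Omega$ and satisfying the stated bounds \eqref{10}--\eqref{12}.

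Third, I would identify $u_{v}$ as a viscosity solution of \eqref{pdv} in the sense of Definition \ref{def3}. Using the semicontinuous envelope reformulation recalled right before the remark closing Section \ref{uniop}, it suffices to verify that $u_{v}$ is a viscosity subsolution of $\mathfrak{H}_{*}(x,v,Du,D^{2}u)=0$ and a viscosity supersolution of $\mathfrak{H}^{*}(x,v,Du,D^{2}u)=0$ associated to the operator in \eqref{pdv}. Away from $\partial\{v=0\}$ the mollified coefficients $p_{\varepsilon;v}$, $a_{\varepsilon;v}$, $b_{\varepsilon;v}$ converge locally uniformly to $p_{v}$, $a\mathds{1}_{\{v>0\}}$, $b\mathds{1}_{\{v<0\}}$ (since $\{v>0\}$ and $\{v<0\}$ are open and $v$ is continuous), so classical stability of viscosity sub/supersolutions under locally uniform convergence of both the coefficients and the solutions applies directly. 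On the interface $\{v=0\}$ the convergence of the coefficients is only one-sided, but it is exactly of the right sign to match the envelope formulation: $\liminf_{\varepsilon\to 0,y\to x} G_{\varepsilon}(y,\cdot)$ coincides with the lower envelope of the limiting degeneracy law and $\limsup$ with the upper envelope, so the standard half-relaxed limit procedure transfers the viscosity inequalities for $u_{\varepsilon}$ into the subsolution condition for $\mathfrak{H}_{*}$ and the supersolution condition for $\mathfrak{H}^{*}$.

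The main obstacle is precisely this last step, namely the passage to the limit on the interface $\{v=0\}$, where both the degeneracy exponent $p_{\varepsilon;v}$ and the weights $a_{\varepsilon;v},b_{\varepsilon;v}$ jump in the limit. Once the argument is set up through the envelope formulation of Definition \ref{def3}, the stability follows from the standard Crandall--Ishii machinery, but it is important that the regularization in \eqref{abe} has been chosen precisely so that the half-relaxed limits of $G_{\varepsilon}$ are pinched between the lower and upper envelopes of the limiting degeneracy law; this is what makes the envelope inequalities stable under $\varepsilon\to 0$ and concludes the proof.
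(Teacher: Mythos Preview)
Your proposal is correct and follows essentially the same route as the paper: take the $u_{\varepsilon}$ from Corollary \ref{c1}, use the $\varepsilon$-independent barriers $\underline{w},\overline{w}$ together with the uniform interior H\"older estimate of Appendix \ref{vem} (Proposition \ref{prophol}) to extract a locally uniform limit $u_{v}$, and then pass to the limit in the equation by stability. The paper dispatches the last step in one line via ``well-known stability properties of viscosity solutions,'' while you spell out the half-relaxed limit/envelope argument on $\{v=0\}$ in more detail; this extra care is consistent with, and a useful elaboration of, the paper's proof.
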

\begin{proof}
By Corollary \ref{c1}, there exists a viscosity solution $u_{\varepsilon}\in C(\bar{\Omega})$ to equation \eqref{eqe} so that
\begin{eqnarray}\label{9}
\left. u_{\varepsilon}\right|_{\partial \Omega}=\left. g\right|_{\partial \Omega}\qquad \mbox{and}\qquad \underline{w}\le u_{\varepsilon}\le \overline{w},
\end{eqnarray}
where $\underline{w}$, $\overline{w}$ are respectively the viscosity subsolution and the viscosity supersolution to \eqref{eqe} determined by Lemma \ref{subsux}. We stress that $\underline{w}$ and $\overline{w}$ do not depend from $\varepsilon$. Notice that the bound in \eqref{8} is uniform in $\varepsilon$ and that equation \eqref{eqe} falls in the class of those considered by Proposition \ref{prophol}, with $\mu=\varepsilon$, $p(\cdot)\equiv p_{\varepsilon;v}(\cdot)$, $q(\cdot)\equiv q$, $s(\cdot)\equiv s$, $a(\cdot)\equiv a_{\varepsilon;v}(\cdot)$ and $b(\cdot)\equiv b_{\varepsilon;v}(\cdot)$, therefore, keeping in mind Remark \ref{r1}, we see that $\{u_{\varepsilon}\}\subset C^{0,\beta_{0}}_{\loc}(\Omega)$ for all $\beta_{0}\in (0,1)$ with uniform estimates on the H\"older seminorm, cf. \eqref{a.es}. This, together with \eqref{9}, the compact embedding of the H\"older spaces $C^{0,\beta_{1}}(\Omega')\hookrightarrow C^{0,\beta_{2}}(\Omega')$ for $\beta_{2}<\beta_{1}$ and \eqref{8}, gives that $u_{\varepsilon}\to u_{v}$ uniformly on compact subsets of $\Omega$, so we have
\begin{flalign}\label{11}
u_{v}\in C(\bar{\Omega}),\qquad \left. u_{v}\right|_{\partial \Omega}=\left. g\right|_{\partial \Omega},\qquad \underline{w}\le u_{v}\le \overline{w},\qquad \nr{u_{v}}_{L^{\infty}(\Omega')}+[u_{v}]_{0,\beta_{0};\Omega'}\le c,
\end{flalign}
with $c\equiv c(n,\lambda,\Lambda,\nr{f}_{L^{\infty}(\Omega)},\nr{g}_{L^{\infty}(\partial \Omega)},\partial \Omega,\diam(\Omega),\dist(\Omega',\partial \Omega),\beta_{0})$ for all $\beta_{0}\in (0,1)$ (of course the dependency from $\beta_{0}$ occurs only when considering $[u_{v}]_{0,\beta_{0};\Omega'}$). Finally, by very definition, we have that $p_{\varepsilon;v}\to p_{v}$, $a_{\varepsilon;v}\to a\mathds{1}_{\{v>0\}}$ and $b_{\varepsilon;v}\to b\mathds{1}_{\{v<0\}}$ in $\Omega$, so by well-known stability properties of viscosity solutions, cf. \cite[Chapter 3]{ka} and $\eqref{11}_{2}$ we have that $u_{v}\in C(\bar{\Omega})$ is a viscosity solution of equation \eqref{pdv}. 
\end{proof}
\subsection{Proof of Theorem \ref{t1}}
Let $\ti{u}\in C(\bar{\Omega})$ be any function. We recursively define the sequence of functions $\{u_{\kk}\}_{\kk\in \N\cup\{0\}}$ so that $u_{0}=\ti{u}$ and for $\kk\ge 1$, $u_{\kk}$ is a solution of problem \eqref{pdv} with $v\equiv u_{\kk-1}$, whose existence is assured by Corollary \ref{c2}. As the bounds in \eqref{10}-\eqref{12} do not depend on $v$ and so in our case they are independent from $\kk$, we have that sequence $\{u_{\kk}\}$ is uniformly bounded with respect to the full $C^{0,\beta_{0}}$-norm for all $\beta_{0}\in (0,1)$, therefore $u_{\kk}\to u_{\infty}$ uniformly on compact subsets of $\Omega$, $u_{\infty}\in C(\bar{\Omega})$ and $\left.u_{\infty}\right|_{\partial \Omega}=\left.g\right|_{\partial \Omega}$. Standard stability results, see \cite[Chapter 3]{ka} eventually render that $u_{\infty}$ is a viscosity solution of problem \eqref{pd} and the proof is complete. 

\section{Compactness for switched differential inequalities}\label{hr}
The main result of this section is uniform H\"older continuity for viscosity solutions of the switched equation \eqref{d.1}. The uniformity is due to the fact that all the constants bounding the H\"older seminorm of solutions will not depend from $\xi$, or from the moduli of continuity of coefficients $a(\cdot)$, $b(\cdot)$ nor on their $L^{\infty}$-norm.

\begin{proposition}\label{phol}
Under assumptions \texttt{set}, let $u\in C(\Omega)$ be a viscosity subsolution of \eqref{dif1} and a viscosity supersolution to \eqref{dif2}. Then $u\in C^{0,\beta_{0}}_{\loc}(\Omega)$ for all $\beta_{0}\in (0,1)$. In particular, for any $\beta_{0}\in (0,1)$ there exists a threshold radius $r_{*}\equiv r_{*}(\beta_{0})\in (0,1/4)$ so that whenever $B_{\rr}(z_{0})\Subset \Omega$ is a ball with $\rr\in (0,r_{*}]$ it holds that
\begin{eqnarray*}
\snr{u(x)-u(y)}\le c\snr{x-y}^{\beta_{0}}\qquad \mbox{for all} \ \ x,y\in B_{\rr/2}(z_{0}),
\end{eqnarray*}
with $c\equiv c(n,\lambda,\Lambda,\nr{u}_{L^{\infty}(B_{\rr}(z_{0}))},\nr{f}_{L^{\infty}(\Omega)},\rr,\beta_{0})$.
\end{proposition}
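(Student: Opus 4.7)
The plan is to carry out an Ishii--Lions doubling-variables argument tailored to the switched min/max viscosity formulation \eqref{dif1}--\eqref{dif2}. Uniformity of the H\"older bound with respect to $\xi$, as well as to the moduli of continuity and the sup-norms of the coefficients $a(\cdot), b(\cdot)$, is obtained by a careful case analysis that exploits only those structural features of \eqref{dif1}--\eqref{dif2} which are insensitive to these data, most notably the presence of an $F$-only branch in both the min and the max formulations.

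Fix $\beta_0 \in (0,1)$ and $B_{\rr}(z_0) \Subset \Omega$ with $\rr \le r_*$ to be chosen small, and pick arbitrary $x_0, y_0 \in B_{\rr/2}(z_0)$. I would consider the penalized function
\[
\Phi(x,y) := u(x) - u(y) - L_1 |x-y|^{\beta_0} - \frac{L_2}{2}\bigl(|x-x_0|^2 + |y-y_0|^2\bigr)
\]
on $\bar B_\rr(z_0) \times \bar B_\rr(z_0)$, with positive constants $L_1, L_2 \gg 1$ to be determined. The sought estimate reduces to $\Phi(x_0, y_0) \le 0$; I argue by contradiction, assuming $\Phi$ achieves a positive maximum at some $(\bar x, \bar y)$. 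A standard localization fixes $L_2 \sim \nr{u}_{L^\infty(B_\rr)} \rr^{-2}$, confining $(\bar x, \bar y)$ strictly inside $B_\rr \times B_\rr$, while $L_1 |\bar x - \bar y|^{\beta_0} \le 2 \nr{u}_{L^\infty(B_\rr)}$ forces $\bar x \ne \bar y$ and $|\bar x - \bar y| \to 0$ as $L_1 \to \infty$, so the penalization is smooth at $(\bar x, \bar y)$.

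Invoking Proposition \ref{p1} with test function $\psi(x,y) := L_1 |x-y|^{\beta_0} + (L_2/2)(|x-x_0|^2 + |y-y_0|^2)$ produces matrices $X, Y \in \mathcal{S}(n)$ and slopes
\[
p_x := \beta_0 L_1 |\bar x - \bar y|^{\beta_0 - 2}(\bar x - \bar y) + L_2(\bar x - x_0), \quad p_y := \beta_0 L_1 |\bar x - \bar y|^{\beta_0 - 2}(\bar x - \bar y) - L_2(\bar y - y_0),
\]
together with the usual matrix inequality. A direct evaluation of $D^2\psi(\bar x, \bar y)$ on the unit direction $\hat n := (\bar x - \bar y)/|\bar x - \bar y|$ shows that $X - Y$ possesses an eigenvalue not exceeding $-c_n L_1 |\bar x - \bar y|^{\beta_0 - 2} + O(L_2)$, which is arbitrarily large in modulus upon choosing $L_1$ large or $r_*$ small. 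Via the Pucci form of uniform ellipticity \eqref{elll}, this propagates into a lower bound $F(Y) - F(X) \ge c\, L_1 |\bar x - \bar y|^{\beta_0 - 2}$ that diverges as $L_1 \to \infty$.

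The crux is now to contradict this via a uniform upper bound on $F(Y) - F(X)$. Here the crucial observation is that for $L_1 \gg L_2$ the slope $|p_x|$ is much larger than $L_2$, so the degeneracy coefficients $H_q(\bar x, p_x;\xi)$ and $H_s(\bar x, p_x;\xi)$ are generically comparable to $|\xi + p_x|^{p^+}, |\xi + p_x|^{p_-}$, and similarly at $\bar y$. From the subsolution property of \eqref{dif1} at $\bar x$, at least one of $F(X), H_q(\bar x, p_x;\xi)F(X), H_s(\bar x, p_x;\xi)F(X)$ is $\le \nr{f}_{L^\infty(\Omega)}$; symmetrically, from the supersolution property of \eqref{dif2} at $\bar y$, at least one of $F(Y), H_q(\bar y, p_y;\xi)F(Y), H_s(\bar y, p_y;\xi)F(Y)$ is $\ge -\nr{f}_{L^\infty(\Omega)}$. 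Running through the nine resulting branch combinations, in each nondegenerate subcase one isolates $F(X)$ and $F(Y)$ by dividing through the appropriate $H_\star$, obtaining a bound $|F(X)| + |F(Y)| \le C(\texttt{data}, \nr{f}_{L^\infty(\Omega)})$ uniformly in $\xi$ and in $a(\cdot), b(\cdot)$; the degenerate subcases, where $\xi + p_\cdot = 0$ and $H_\star$ vanishes so that the min/max inequality carries no information on $F$, are handled by falling back on the $F$-only branch, which is always one of the three options in both \eqref{dif1} and \eqref{dif2} and thereby supplies the missing uniformly elliptic piece of viscosity information. Combining this uniform bound with the diverging lower bound from the previous paragraph yields the desired contradiction for $L_1$ chosen large enough in terms of $\texttt{data}, \nr{u}_{L^\infty(B_\rr)}, \nr{f}_{L^\infty(\Omega)}, \rr, \beta_0$, concluding the proof. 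The principal technical obstacle is precisely this nine-case branch analysis, particularly the treatment of the degenerate subcases; the reason the Hölder constant turns out to be uniform in $\xi$ and in $a(\cdot), b(\cdot)$ is structural, namely that the $F$-alone branch is always present in both \eqref{dif1} and \eqref{dif2}, following the blueprint established in \cite{hprs} for the related transmission setting and in \cite{de1} for the Double Phase setting.
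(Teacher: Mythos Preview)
Your overall Ishii--Lions framework is right, and the localization and matrix--inequality steps are fine. The genuine gap is in your handling of the ``degenerate subcases'' where $\xi+p_{\bar x}$ (or $\xi+p_{\bar y}$) is small. You write that these are ``handled by falling back on the $F$-only branch, which is always one of the three options'', but this is not how the min inequality works: from
\[
\min\bigl\{F(X),\ H_q(\bar x,p_{\bar x};\xi)F(X),\ H_s(\bar x,p_{\bar x};\xi)F(X)\bigr\}\le \nr{f}_{L^\infty(\Omega)}
\]
you only learn that the \emph{smallest} of the three is controlled, not that $F(X)$ itself is. When $F(X)\ge 0$ this reads $\min\{1,H_q,H_s\}\,F(X)\le \nr{f}_{L^\infty}$, and if $\snr{\xi+p_{\bar x}}$ is tiny then $\min\{1,H_q,H_s\}\approx \snr{\xi+p_{\bar x}}^{\min\{p^+,p_-\}}$ is tiny, so the resulting bound on $F(X)$ blows up. Since $\xi\in\mathbb{R}^n$ is arbitrary and your single modulus $\omega(t)=t^{\beta_0}$ produces slopes $p_{\bar x}$ of a fixed (albeit large) size, nothing prevents $\xi$ from nearly cancelling $p_{\bar x}$. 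Your nine-case analysis therefore does not deliver a bound uniform in $\xi$.

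The paper closes exactly this gap by choosing the modulus $\omega$ \emph{after} looking at $\snr{\xi}$: for $\snr{\xi}$ below a threshold $\kk_0^{-1}$ (comparable to $A_1+2A_2$) it keeps $\omega(t)=t^{\beta_0}$, so that $\snr{p_{\bar x}}$ is very large and dominates $\snr{\xi}$; for $\snr{\xi}>\kk_0^{-1}$ it switches to the nearly-Lipschitz modulus $\omega(t)=t-\tfrac{1}{3}t^{3/2}$, so that $\snr{p_{\bar x}}\le A_1+2A_2$ and now $\snr{\xi}$ dominates. In both regimes one gets $\snr{\xi+p_{\bar x}},\snr{\xi+p_{\bar y}}\ge 1$, hence $\min\{1,H_q,H_s\}=1$, and the min/max inequalities collapse to $F(X)\le \nr{f}_{L^\infty}$ and $F(Y)\ge -\nr{f}_{L^\infty}$ directly---no branch analysis needed. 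This $\xi$-dependent choice of $\omega$ is the missing idea in your argument.
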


\begin{proof}
Let $u\in C(\Omega)$ be a viscosity subsolution to \eqref{dif1} and a viscosity supersolution of \eqref{dif2}, $\beta_{0}\in (0,1)$ be any number and $B_{\rr}(z_{0})\Subset \Omega$ be a ball with radius $\rr\in \left(0,r_{*}\right]$, where $r_{*}:=\left(\beta_{0}/10\right)^{\frac{1}{1-\beta_{0}}}$ is a threshold radius that will play an important role in a few lines. We aim to show that there are two positive constants, $A_{1}\equiv A_{1}(n,\lambda,\Lambda,\nr{u}_{L^{\infty}(\Omega)},\nr{f}_{L^{\infty}(\Omega)},\rr,\beta_{0})$, $A_{2}\equiv A_{2}(\nr{u}_{L^{\infty}(\Omega)},\rr)$ so that
\begin{flalign}\label{12.1}
\mathcal{M}(x_{0}):=\sup_{x,y\in B_{\rr}(z_{0})}\left(u(x)-u(y)-A_{1}\omega(\snr{x-y})-A_{2}\left(\snr{x-x_{0}}^{2}+\snr{y-x_{0}}^{2}\right)\right)\le 0
\end{flalign}
holds for all $x_{0}\in B_{\rr/2}(z_{0})$. In \eqref{12.1}, it is
\begin{flalign*}
\omega(t):=t^{\beta_{0}} \ \ \mbox{if} \ \ \snr{\xi}\le \kk_{0}^{-1},\qquad \omega(t):=\begin{cases}\ t-\omega_{0}t^{3/2}\ \ &\mbox{if} \ \ t\le t_{0}\\
\ \omega(t_{0}) \ \ &\mbox{if} \ \ t>t_{0} 
\end{cases}\ \ \mbox{if} \ \ \snr{\xi}>\kk_{0}^{-1},
\end{flalign*}
where $\kk_{0}:=(2(A_{1}+2A_{2}))^{-1}$ is a limiting number, $\omega_{0}=1/3$ and $t_{0}:=\left(2/(3\omega_{0})\right)^{2}\ge 1$. By contradiction, we assume that
\begin{eqnarray}\label{13}
\textnormal{there exists}\ x_{0}\in B_{\rr/2}(z_{0}) \ \textnormal{such that} \ \mathcal{M}(x_{0})> 0 \ \textnormal{for all positive} \ A_{1},A_{2},
\end{eqnarray}
define quantities
\begin{flalign}\label{14}
\left\{
\begin{array}{c}
\displaystyle 
\ A_{1}:=\frac{4}{\beta_{0}(1-\beta_{0})}\left[\frac{\nr{f}_{L^{\infty}(\Omega)}}{\lambda}+(2A_{2}+1)\left(\frac{\Lambda}{\lambda}(n-1)+1\right)\right]\\[17pt] \displaystyle
\ A_{2}:=64\rr^{-2}\nr{u}_{L^{\infty}(\Omega)}
\end{array}
\right.
\end{flalign}
and consider the auxiliary functions
\begin{eqnarray*}
\begin{cases}
\ \psi(x,y):=A_{1}\omega(\snr{x-y})+A_{2}\left(\snr{x-x_{0}}^{2}+\snr{y-x_{0}}^{2}\right)\\
\ \phi(x,y):=u(x)-u(y)-\psi(x,y).
\end{cases}
\end{eqnarray*}
If $(\bar{x},\bar{y})\in \bar{B}_{\rr}(z_{0})\times \bar{B}_{\rr}(z_{0})$ is a maximum point of $\phi(\cdot)$, via \eqref{13} we have $\phi(\bar{x},\bar{y})=\mathcal{M}(x_{0})>0$, so
\begin{eqnarray*}
A_{1}\omega(\snr{\bar{x}-\bar{y}})+A_{2}\left(\snr{\bar{x}-x_{0}}^{2}+\snr{\bar{y}-x_{0}}^{2}\right)\le u(\bar{x})-u(\bar{y})\le 2\nr{u}_{L^{\infty}(\Omega)}.
\end{eqnarray*}
Inserting $\eqref{14}_{2}$ in the above inequality yields that $\bar{x}$, $\bar{y}$ both belong to the interior of $B_{\rr}(z_{0})$:
\begin{eqnarray*}
\snr{\bar{x}-z_{0}}\le \snr{\bar{x}-x_{0}}+\snr{x_{0}-z_{0}}\le \frac{3\rr}{4}\qquad \mbox{and}\qquad \snr{\bar{y}-z_{0}}\le \snr{\bar{y}-x_{0}}+\snr{x_{0}-z_{0}}\le \frac{3\rr}{4}.
\end{eqnarray*}
Moreover, $\bar{x}\not =\bar{y}$, otherwise $\mathcal{M}(x_{0})=\phi(\bar{x},\bar{y})=0$ and \eqref{12.1} would be verified. This last remark implies that $\psi(\cdot)$ is smooth in a small neighborhood of $(\bar{x},\bar{y})$, therefore we can determine its gradients
\begin{flalign*}
&\xi_{\bar{x}}:=\partial_{x}\psi(\bar{x},\bar{y})=A_{1}\omega'(\snr{\bar{x}-\bar{y}})\frac{\bar{x}-\bar{y}}{\snr{\bar{x}-\bar{y}}}+2A_{2}(\bar{x}-x_{0}),\\
&\xi_{\bar{y}}:=-\partial_{y}\psi(\bar{x},\bar{y})=A_{1}\omega'(\snr{\bar{x}-\bar{y}})\frac{\bar{x}-\bar{y}}{\snr{\bar{x}-\bar{y}}}-2A_{2}(\bar{y}-x_{0}).
\end{flalign*}
To summarize, we have that $\phi(\cdot)$ attains its maximum in $(\bar{x},\bar{y})$ inside $B_{\rr}(z_{0})\times B_{\rr}(z_{0})$ and $\phi(\cdot)$ is smooth around $(\bar{x},\bar{y})$, thus Proposition \ref{p1} applies: for any $\iota>0$ we can find a threshold $\hat{\delta}=\hat{\delta}(\iota,\nr{D^{2}\psi})$ such that for all $\delta \in (0,\hat{\delta})$ the couple $(\xi_{\bar{x}},X_{\delta})$ is a limiting subjet of $u$ at $\bar{x}$ and the couple $(\xi_{\bar{y}},Y_{\delta})$ is a limiting superjet of $u$ at $\bar{y}$ and the matrix inequality
\begin{flalign}\label{15}
\begin{bmatrix}
X_{\delta} & 0 \\ 0 & -Y_{\delta} 
\end{bmatrix}\le \begin{bmatrix}
Z & -Z \\ -Z & Z 
\end{bmatrix}+(2A_{2}+\delta)\mathbf{I}
\end{flalign}
holds, where we set
\begin{flalign*}
Z:=&A_{1}(D^{2}\omega)(\snr{\bar{x}-\bar{y}})\nonumber \\
=&A_{1}\left[\frac{\omega'(\snr{\bar{x}-\bar{y}})}{\snr{\bar{x}-\bar{y}}}\mathbf{I}+\left(\omega''(\snr{\bar{x}-\bar{y}})-\frac{\omega'(\snr{\bar{x}-\bar{y}})}{\snr{\bar{x}-\bar{y}}}\right)\frac{(\bar{x}-\bar{y})\otimes (\bar{x}-\bar{y})}{\snr{\bar{x}-\bar{y}}^{2}}\right].
\end{flalign*}
We fix $\delta\equiv \min\left\{1,\frac{\hat{\delta}}{4}\right\}$ and apply \eqref{15} to vectors of the form $(z,z)\in \mathbb{R}^{2n}$, to obtain  
\begin{flalign*}
\langle(X_{\delta}-Y_{\delta})z,z \rangle\le (4A_{2}+2)\snr{z}^{2}.
\end{flalign*}
This means that 
\begin{flalign}\label{16}
\mbox{all the eigenvalues of} \ \ X_{\delta}-Y_{\delta} \ \ \mbox{are less than or equal to} \ \ 2(2A_{2}+1).
\end{flalign}
In particular, applying \eqref{15} to the vector $\bar{z}:=\left(\frac{\bar{x}-\bar{y}}{\snr{\bar{x}-\bar{y}}},\frac{\bar{y}-\bar{x}}{\snr{\bar{x}-\bar{y}}}\right)$, we get
\begin{flalign*}
\left \langle (X_{\delta}-Y_{\delta})\frac{\bar{x}-\bar{y}}{\snr{\bar{x}-\bar{y}}},\right.&\left.\frac{\bar{x}-\bar{y}}{\snr{\bar{x}-\bar{y}}} \right \rangle\le 2(2A_{2}+1)+4A_{1}\omega''(\snr{\bar{x}-\bar{y}}).
\end{flalign*}
This yields in particular that
\begin{flalign}\label{17}
\mbox{at least one eigenvalue of} \ \ X_{\delta}-Y_{\delta} \ \ \mbox{is less than} \ \ 2(2A_{2}+1)+4A_{1}\omega''(\snr{\bar{x}-\bar{y}}).
\end{flalign}
As by definition $\omega''(t)<0$, we can majorize the quantity appearing in \eqref{17} as 
\begin{eqnarray*}
2(2A_{2}+1)+4A_{1}\omega''(\snr{\bar{x}-\bar{y}})\le 2(2A_{2}+1)-4A_{1}\snr{\omega''(1)}\stackrel{\eqref{14}_{1}}{<}0,
\end{eqnarray*}
where we also used that $\snr{\bar{x}-\bar{y}}\le 1/2$. This means that at least one eigenvalue of $X_{\delta}-Y_{\delta}$ is negative, thus via \eqref{m-}$_{2}$, \eqref{16} and \eqref{17}, we obtain
\begin{eqnarray}\label{18}
\mathcal{M}_{\lambda,\Lambda}^{-}(X_{\delta}-Y_{\delta})\ge -2(2A_{2}+1)\left[\Lambda(n-1)+\lambda\right]+4\lambda A_{1}\snr{\omega''(1)},
\end{eqnarray}
therefore
\begin{flalign}
F(X_{\delta})-F(Y_{\delta})\stackrel{\eqref{elll}}{\ge}\mathcal{M}^{-}_{\lambda,\Lambda}(X_{\delta}-Y_{\delta})\stackrel{\eqref{18}}{\ge}-2(2A_{2}+1)\left[\Lambda(n-1)+\lambda\right]+4\lambda A_{1}\snr{\omega''(1)}.\label{20}
\end{flalign}
With $\xi_{\bar{x}},\xi_{\bar{y}}$ computed before, we write the viscosity inequalities deriving from \eqref{dif1}-\eqref{dif2}:
\begin{flalign}\label{19}
\begin{cases}
\ \min\left\{F(X_{\delta}),H_{q}(\bar{x},\xi_{\bar{x}};\xi)F(X_{\delta}),H_{s}(\bar{x},\xi_{\bar{x}};\xi)F(X_{\delta})\right\}\le \nr{f}_{L^{\infty}(\Omega)}\\
\ \max\left\{F(Y_{\delta}),H_{q}(\bar{y},\xi_{\bar{y}};\xi)F(Y_{\delta}),H_{s}(\bar{y},\xi_{\bar{y}};\xi)F(Y_{\delta})\right\}\ge- \nr{f}_{L^{\infty}(\Omega)}.
\end{cases}
\end{flalign}
For simplicity, define
\begin{flalign*}
&\mathfrak{H}^{-}(x):= \min\left\{F(X_{\delta}),H_{q}(x,\xi_{x};\xi)F(X_{\delta}),H_{s}(x,\xi_{x};\xi)F(X_{\delta})\right\}\\
&\mathfrak{H}^{+}(x):=\max\left\{F(Y_{\delta}),H_{q}(x,\xi_{x};\xi)F(Y_{\delta}),H_{s}(x,\xi_{x};\xi)F(Y_{\delta})\right\}
\end{flalign*}
and notice that it is
\begin{flalign}\label{24.1}
\begin{cases}
\ \mathfrak{H}^{-}(\bar{x})=\min\left\{1,H_{q}(\bar{x},\xi_{\bar{x}};\xi),H_{s}(\bar{x},\xi_{\bar{x}};\xi)\right\}F(X_{\delta})\quad &\mbox{if} \ \ F(X_{\delta})\ge 0\\
\ \mathfrak{H}^{-}(\bar{x})=\max\left\{1,H_{q}(\bar{x},\xi_{\bar{x}};\xi),H_{s}(\bar{x},\xi_{\bar{x}};\xi)\right\}F(X_{\delta})\quad &\mbox{if} \ \ F(X_{\delta})<0\\
\ \mathfrak{H}^{+}(\bar{y})=\max\left\{1,H_{q}(\bar{y},\xi_{\bar{y}};Y_{\delta}),H_{s}(\bar{y},\xi_{\bar{y}};\xi)\right\}F(Y_{\delta})\quad &\mbox{if} \ \ F(Y_{\delta})\ge 0\\
\ \mathfrak{H}^{+}(\bar{y})=\min\left\{1,H_{q}(\bar{y},\xi_{\bar{y}};\xi),H_{s}(\bar{y},\xi_{\bar{y}};\xi)\right\}F(Y_{\delta})\quad &\mbox{if} \ \ F(Y_{\delta})<0.
\end{cases}
\end{flalign}
At this stage, we treat separately two cases: $\snr{\xi}>\kk_{0}^{-1}$ and $\snr{\xi}\le \kk_{0}^{-1}$.
\subsubsection*{Case $\snr{\xi}>\kk_{0}^{-1}$} We expand the expression of $\omega(\cdot)$ in \eqref{20} to get
\begin{eqnarray}\label{21}
F(X_{\delta})-F(Y_{\delta})\ge -2(2A_{2}+1)[\Lambda(n-1)+\lambda]+\lambda A_{1}.
\end{eqnarray}
Moreover, our choice of $\kk_{0}$ assures that
\begin{eqnarray*}
\min\left\{\snr{\xi+\xi_{\bar{x}}},\snr{\xi+\xi_{\bar{y}}}\right\}\ge \kk_{0}^{-1}-\max\left\{\snr{\xi_{\bar{x}}},\snr{\xi_{\bar{y}}}\right\}\ge A_{1}+2A_{2}\ge 1,
\end{eqnarray*}
which implies
\begin{eqnarray}\label{23}
\mf{H}^{-}(\bar{x})\ge 1\qquad \mbox{and}\qquad \mf{H}^{+}(\bar{y})\ge 1.
\end{eqnarray}
Keeping in mind \eqref{24.1}, we can manipulate the variational inequalities \eqref{19} to get
\begin{eqnarray*}
4\nr{f}_{L^{\infty}(\Omega)}&\stackrel{\eqref{23},\eqref{24.1}}{\ge}&\frac{2\nr{f}_{L^{\infty}(\Omega)}}{\mf{H}^{-}(\bar{x})}+\frac{2\nr{f}_{L^{\infty}(\Omega)}}{\mf{H}^{+}(\bar{y})}\nonumber \\
&\stackrel{\eqref{19}}{\ge}&F(X_{\delta})-F(Y_{\delta})\nonumber \\
&\stackrel{\eqref{20}}{\ge}&-2(2A_{2}+1)\left[\Lambda(n-1)+\lambda\right]+\lambda A_{1},
\end{eqnarray*}
which renders that
\begin{eqnarray}\label{21.1}
4\nr{f}_{L^{\infty}(\Omega)}\ge -2(2A_{2}+1)\left[\Lambda(n-1)+\lambda\right]+\lambda A_{1}.
\end{eqnarray}
The content of \eqref{21.1} contradicts the choice made in \eqref{14}$_{1}$.
\subsubsection*{Case $\snr{\xi}\le \kk_{0}^{-1}$}
In this situation, \eqref{20} reads as
\begin{eqnarray}\label{25}
F(X_{\delta})-F(Y_{\delta})\ge -2(2A_{2}+1)[\Lambda(n-1)+\lambda]+4\beta_{0}(1-\beta_{0})\lambda A_{1},
\end{eqnarray}
Now set $\gamma_{*}:=\beta_{0}r_{*}^{\beta_{0}-1}$ and notice that our choice of $r_{*}$ yields that
$\sqrt{\gamma_{*}^{2}-4\gamma_{*}}-6>1$, so 
\begin{eqnarray*}
\min\left\{\snr{\xi_{\bar{x}}},\snr{\xi_{\bar{y}}}\right\}-\kk_{0}^{-1}&\ge&\sqrt{A_{1}\beta_{0}\snr{\bar{x}-\bar{y}}^{\beta_{0}-1}\left(A_{1}\beta_{0}\snr{\bar{x}-\bar{y}}^{\beta_{0}-1}-4A_{2}\right)}-\kk_{0}^{-1}\nonumber \\
&\ge&\sqrt{A_{1}\gamma_{*}\left(A_{1}\gamma_{*}-4A_{2}\right)}-2(A_{1}+2A_{2})\nonumber \\
&\stackrel{\eqref{14}}{\ge}&A_{1}\left(\sqrt{\gamma_{*}^{2}-4\gamma_{*}}-6\right)\ge A_{1}>1
\end{eqnarray*}
and \eqref{23} holds in this case as well. Therefore we can combine as before the variational inequalities \eqref{19} with \eqref{25} and \eqref{23} to deduce
\begin{eqnarray*}
4\nr{f}_{L^{\infty}(\Omega)}\ge -2(2A_{2}+1)[\Lambda(n-1)+\lambda]+4\beta_{0}(1-\beta_{0})\lambda A_{1},
\end{eqnarray*}
which is again a contradiction of $\eqref{14}_{1}$.\\\\
Merging the two previous cases we can conclude that if $u\in C(\Omega)$ is a viscosity solution to \eqref{d.1}, then $u$ is $\beta_{0}$-H\"older continuous on $B_{\rr/2}(z_{0})$ for all $\beta_{0}\in (0,1)$ and estimate
$$
[u]_{0,\beta_{0};B_{\rr/2}(z_{0})}\le c(n,\lambda,\Lambda,\nr{u}_{L^{\infty}(\Omega)},\nr{f}_{L^{\infty}(\Omega)},\rr,\beta_{0})
$$
holds true. The arbitrariety of $B_{\rr}(z_{0})\Subset \Omega$ allows using a standard covering argument to deduce that $u\in C^{0,\beta_{0}}_{\loc}(\Omega)$ for all $\beta_{0}\in (0,1)$ and the proof is complete.

\end{proof}

\section{Gradient H\"older continuity}\label{ghgh}
In this section we prove that viscosity solutions of equation \eqref{eq} are locally $C^{1,\alpha_{0}}$-regular for some $\alpha_{0}\equiv \alpha_{0}(n,\lambda,\Lambda,p^{+},p_{-})\in (0,1)$. 
To do so, we shall first prove that in a suitable smallness regime, a continuous viscosity solution  of the switched equation \eqref{d.1} is $L^{\infty}$-close to a solution of a homogeneous problem of type \eqref{probhom}. This closeness is assured by an "harmonic" approximation lemma, whose proof is based on \cite{de1,is} and that strongly relies on the smallness of certain quantities and on the compactness earned via Proposition \ref{phol}.
\subsection{Smallness regime}\label{sr}
We exploit the scaling properties of \eqref{dif1}-\eqref{dif2} for reducing the problem to a smallness regime. In other terms, if $\xi\in \mathbb{R}^{n}$ is an arbitrary vector and $u\in C(\Omega)$ is a viscosity subsolution/supersolution to \eqref{dif1}/\eqref{dif2}, we blow and scale $u$ in order to construct another map $\mathfrak{u}$, that is a viscosity subsolution of an equation having the same structure of \eqref{dif1}, a viscosity supersolution of an equation similar to \eqref{dif2} and such that, for a given $\varepsilon\in (0,1)$ it is $\osc_{B_{1}(0)}\uu\le 1$ and the right-hand side constant appearing in \eqref{dif1}-\eqref{dif2} can be controlled in modulus by $\varepsilon$. Under these conditions, $\uu$ is called "$\varepsilon$-normalized viscosity solution". Let us show this construction. Let $\varepsilon\in (0,1)$ be any number, $B_{\tau}(x_{0})\Subset \Omega$ be any ball with with $\tau\in \left(0,\frac{1}{16}\min\{\diam(\Omega),1\}\right)$ to be quantified later on and define $\mf{M}:=16\left(1+\nr{u}_{L^{\infty}(\Omega)}+\nr{f}_{L^{\infty}(\Omega)}+\nr{f}_{L^{\infty}(\Omega)}^{\frac{1}{p^{+}+1}}+\nr{f}_{L^{\infty}(\Omega)}^{\frac{1}{p_{-}+1}}\right)$. Now, if $u\in C(\Omega)$ is a viscosity solution to \eqref{d.1} on $B_{\tau}(x_{0})$, then a straightforward computation shows that the map $\uu(x):=u(x_{0}+\tau x)\mf{M}^{-1}$ is in particular a $\varepsilon$-normalized viscosity subsolution of
\begin{flalign}\label{d.1t}
\min\left\{\mf{F}(D^{2}\uu),\mf{H}_{q}(x,D\uu;\bar{\xi})\mf{F}(D^{2}\uu),\mf{H}_{s}(x,D\uu;\bar{\xi})\mf{F}(D^{2}\uu)\right\}=\mf{C}\quad \mbox{in} \ \ B_{1}(0)
\end{flalign}
and a $\varepsilon$-normalized viscosity supersolution to
\begin{flalign}\label{d.2t}
\max\left\{\mf{F}(D^{2}\uu),\mf{H}_{q}(x,D\uu;\bar{\xi})\mf{F}(D^{2}\uu),\mf{H}_{s}(x,D\uu;\bar{\xi})\mf{F}(D^{2}\uu)\right\}=-\mf{C}\quad \mbox{in} \ \ B_{1}(0),
\end{flalign}
where we set
\begin{flalign*}
\left\{
\begin{array}{c}
\displaystyle 
\bar{\xi}:=\frac{\tau}{\mf{M}}\xi,\qquad \mf{a}(x):=\left(\frac{\mf{M}}{\tau}\right)^{q-p^{+}}a(x_{0}+\tau x),\qquad \mf{b}(x):=\left(\frac{\mf{M}}{\tau}\right)^{s-p_{-}}b(x_{0}+\tau x)\\[17pt]\displaystyle
\mf{H}_{q}(x,z;\bar{\xi}):=\snr{\bar{\xi}+z}^{p^{+}}+\mf{a}(x)\snr{\bar{\xi}+z}^{q},\qquad \mf{H}_{s}(x,z;\bar{\xi}):=\snr{\bar{\xi}+z}^{p_{-}}+\mf{b}(x)\snr{\bar{\xi}+z}^{s}\\[17pt]\displaystyle
\mf{F}(M):=\frac{\tau^{2}}{\mf{M}}F\left(\frac{\mf{M}}{\tau^{2}}M\right),\qquad \mf{C}:=\max\left\{\frac{\tau^{p^{+}+2}}{\mf{M}^{p^{+}+1}},\frac{\tau^{p_{-}+2}}{\mf{M}^{p_{-}+1}},\frac{\tau^{2}}{\mf{M}}\right\}\nr{f}_{L^{\infty}(\Omega)},
\end{array}
\right.
\end{flalign*}
as by \eqref{pqs} and being $\mf{M}\ge 1$ and $\tau\le 1$ it is
\begin{eqnarray*}
\max\left\{\frac{\tau^{p^{+}+2}}{\mf{M}^{p^{+}+1}},\frac{\tau^{p_{-}+2}}{\mf{M}^{p_{-}+1}},\frac{\tau^{2}}{\mf{M}}\right\}=\frac{\tau^{2}}{\mf{M}}.
\end{eqnarray*}
A quick computation shows that if \eqref{assf} is in force, then $\mf{F}(\cdot)$ is $(\lambda,\Lambda)$-elliptic as well and, if $\varepsilon\in (0,1)$ is the number introduced above, we fix $\tau=\varepsilon^{\frac{1}{2}}$. Therefore, by construction it is \begin{flalign}\label{30}
\left\{
\begin{array}{c}
\displaystyle 
\nr{\mf{a}}_{L^{\infty}(B_{1}(0))}\le \left(\frac{\mf{M}}{\tau}\right)^{q-p^{+}}\nr{a}_{L^{\infty}(B_{\tau}(x_{0}))},\qquad \nr{\mf{b}}_{L^{\infty}(B_{1}(0))}\le \left(\frac{\mf{M}}{\tau}\right)^{s-p_{-}}\nr{b}_{L^{\infty}(B_{\tau}(x_{0}))}\\[17pt]\displaystyle
\nr{\uu}_{L^{\infty}(B_{1}(0))}\le 1,\qquad \osc_{B_{1}(0)}\uu\le 1,\qquad \mf{C}\le \varepsilon.
\end{array}
\right.
\end{flalign}
Finally, notice that there is no loss of generality in assuming that $\uu(0)=0$ since the function $(\uu-\uu(0))$ is still a $\varepsilon$-normalized viscosity subsolution/supersolution of \eqref{d.1t}/\eqref{d.2t} and verifies all the conditions listed above. This is the announced smallness regime. Clearly, for $\xi\equiv 0$ we find a $\varepsilon$-normalized viscosity solution of equation \eqref{eq}. We refer to \cite[Section 2.3]{hprs} for the case in which no coefficients appear.
\begin{remark}\label{rema}
\emph{Due to the strong nonhomogeneity of \eqref{d.1} and \eqref{dif1}-\eqref{dif2}, the scaling factor $\tau$ appears also in the definition of $\mf{a}(\cdot)$ and $\mf{b}(\cdot)$ and forces the (quite dangerous) bounds in $\eqref{30}_{1}$. Anyway, the $L^{\infty}$-norms of $\mf{a}(\cdot)$ and $\mf{b}(\cdot)$ will never influence the constants appearing in the forthcoming estimates and it will ultimately fixed as a function of $\texttt{data}$.}
\end{remark}
\subsection{Harmonic approximation}
In the next lemma we show that, in a suitable smallness regime, continuous viscosity solutions of \eqref{d.1} are close to solutions of the homogeneous problem \eqref{probhom}.
\begin{lemma}\label{har}
Assume \texttt{set} and let $\sigma\equiv \sigma(n,\lambda,\Lambda)\in (0,1)$ be as in \eqref{0331}. Then, there exists a positive $\varepsilon_{0}\equiv \varepsilon_{0}(\texttt{data})\in (0,1)$ such that if $\uu\in C(B_{1}(0))$ is a $\varepsilon_{0}$-normalized viscosity subsolution of \eqref{d.1t} and a $\varepsilon_{0}$-normalized viscosity supersolution to \eqref{d.2t}, it is possible to find $\xi_{\sigma}\in \mathbb{R}^{n}$ such that
\begin{eqnarray*}
\osc_{B_{\sigma}(0)}\left(\uu-\xi_{\sigma}\cdot x\right)< \frac{\sigma}{2}.
\end{eqnarray*}
\end{lemma}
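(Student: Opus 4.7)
The plan is to argue by contradiction and use a compactness-stability scheme. Suppose the conclusion fails: then for each $j\in\N$ there exist coefficients $\mathfrak{a}_j,\mathfrak{b}_j$, an operator $\mathfrak{F}_j$, a vector $\bar\xi_j\in\ern$ and a $(1/j)$-normalized viscosity subsolution of \eqref{d.1t} / supersolution of \eqref{d.2t}, $\uu_j\in C(B_1(0))$, with $\mathfrak{C}_j\le 1/j$, yet
$$
\osc_{B_\sigma(0)}\bigl(\uu_j-\xi\cdot x\bigr)\ge \frac{\sigma}{2}\qquad\mbox{for every } \xi\in\ern. \qquad (*)
$$
Since $\nr{\uu_j}_{L^\infty(B_1(0))}\le 1$ and $\mathfrak{C}_j\le 1$, Proposition \ref{phol} applied on overlapping balls of sufficiently small radius (together with a standard covering argument) guarantees that $\{\uu_j\}$ is equibounded in $C^{0,\beta_0}_{\loc}(B_1(0))$ for every $\beta_0\in(0,1)$, with constants independent of $j$, of $\bar\xi_j$, and of the $L^\infty$-norms of $\mathfrak{a}_j,\mathfrak{b}_j$. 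By Arzel\`a--Ascoli, up to a subsequence, $\uu_j\to h$ uniformly on compact subsets of $B_1(0)$, with $h\in C(B_1(0))$, $h(0)=0$ and $\osc_{B_1(0)} h\le 1$. Extracting a further subsequence, the equi-$(\lambda,\Lambda)$-elliptic family $\{\mathfrak{F}_j\}$ converges locally uniformly to some $F_\infty$ still satisfying \eqref{ell}--\eqref{assf}.

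The crucial and most delicate step is to show that the limit $h$ is a viscosity solution of $F_\infty(D^2h)=0$ in $B_1(0)$, overcoming the joint presence of the $\min/\max$ envelopes in \eqref{d.1t}--\eqref{d.2t} and the possible vanishing of the weights $\mathfrak{H}_q^j,\mathfrak{H}_s^j$ at points where $\bar\xi_j+D\varphi=0$; this is the main obstacle of the argument. I would split the stability analysis into two cases. If $\snr{\bar\xi_j}\to\infty$, then $\mathfrak{H}_q^j(x,D\varphi;\bar\xi_j)$ and $\mathfrak{H}_s^j(x,D\varphi;\bar\xi_j)$ diverge uniformly on compact sets for any fixed $C^2$ test function $\varphi$; exploiting the case analysis \eqref{24.1}, division of the viscosity inequalities by the dominating coefficient together with $\mathfrak{C}_j\to 0$ immediately gives $F_\infty(D^2\varphi)(x_0)\le 0$ (resp.\ $\ge 0$) at every max (resp.\ min) contact point. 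If instead $\bar\xi_j\to\bar\xi_\infty\in\ern$ along a subsequence, the vanishing locus of the weights in the limit is the hyperplane $\{\bar\xi_\infty+D\varphi=0\}$; following the perturbation trick of \cite{is} adapted in \cite{de1}, I would replace $\varphi$ by $\varphi+t\cdot x$ for small $t\in\ern$ chosen so that $\bar\xi_\infty+D\varphi(x_0)+t\ne 0$, which keeps $\mathfrak{H}_q^j,\mathfrak{H}_s^j$ uniformly bounded away from zero near the (slightly shifted) contact point, invoke classical stability of viscosity solutions for each fixed $t$, and finally send $t\to 0$.

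Once $F_\infty(D^2h)=0$ in $B_1(0)$, Theorem \ref{rhar} together with the choice of $\sigma\equiv\sigma(n,\lambda,\Lambda)$ made in \eqref{delta} yields, via \eqref{0331}, a vector $\xi_\sigma\in\ern$ such that $\osc_{B_\sigma(0)}(h-\xi_\sigma\cdot x)\le \sigma/4$. Since $\uu_j\to h$ uniformly on $\bar B_\sigma(0)$, for all $j$ sufficiently large one has $\nr{\uu_j-h}_{L^\infty(B_\sigma(0))}<\sigma/16$, and hence
$$
\osc_{B_\sigma(0)}\bigl(\uu_j-\xi_\sigma\cdot x\bigr)\le \osc_{B_\sigma(0)}(h-\xi_\sigma\cdot x)+2\nr{\uu_j-h}_{L^\infty(B_\sigma(0))}<\frac{\sigma}{4}+\frac{\sigma}{8}<\frac{\sigma}{2},
$$
which contradicts $(*)$ applied to $\xi=\xi_\sigma$ and completes the proof. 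The threshold $\varepsilon_0\equiv\varepsilon_0(\texttt{data})$ is then determined implicitly by the stability and convergence arguments, depending only on the structural parameters through the use of Proposition \ref{phol} and Theorem \ref{rhar}.
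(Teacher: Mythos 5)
Your overall scheme—contradiction, compactness via Proposition \ref{phol} and Arzel\`a--Ascoli, identification of the limit as a solution of a homogeneous $(\lambda,\Lambda)$-elliptic equation, and then Theorem \ref{rhar}--\eqref{0331} to get a contradiction—matches the paper. The treatment of Case 1 (where $\snr{\bar\xi_j}\to\infty$) is also aligned with what the paper does. The gap is in the degenerate branch of Case 2, where $\bar\xi_\infty+b=0$.

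You propose to perturb $\varphi$ by a linear term, $\varphi\mapsto\varphi+t\cdot x$, claiming that this keeps $\mathfrak{H}_q^j,\mathfrak{H}_s^j$ uniformly bounded away from zero near the shifted contact point. That claim is not justified, and in general it fails. If $\uu_\infty-(\varphi+t\cdot x)$ attains a local minimum at $x_t$ near $x_0$, then since $\varphi$ is the quadratic $\tfrac12 A(x-x_0)\cdot(x-x_0)+b\cdot(x-x_0)+\uu_\infty(x_0)$ one has $D\varphi(x_t)+t=A(x_t-x_0)+b+t$, hence, using $\bar\xi_\infty+b=0$, the relevant gradient is $\bar\xi_\infty+D\varphi(x_t)+t=A(x_t-x_0)+t$. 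You have no control on $x_t-x_0$ relative to $t$; the contact point can move so that $A(x_t-x_0)$ cancels $t$, and the gradient can degenerate again. This is precisely why Imbert and Silvestre \cite{is} (and \cite{de1}, and the paper here) do \emph{not} use a linear shift but the convex perturbation $\varphi_\delta(x)=\varphi(x)+\delta\snr{\Pi_0(x-x_0)}$, where $\Pi_0$ is the orthogonal projection onto $\Sigma_0$, the sum of the eigenspaces of $A$ associated to nonnegative eigenvalues. Convexity guarantees $D^2\snr{\Pi_0(\cdot-x_0)}\ge 0$, which the ellipticity \eqref{ell} absorbs with the correct sign, and the fine dichotomy on $\snr{\Pi_0(\hat{x}_0-x_0)}$ (either $=0$, treated via the linear substitute $\varphi+\delta e\cdot\Pi_0(x-x_0)$ and a choice of $\hat e$ satisfying \eqref{42}, or $>0$, treated via the algebraic identity \eqref{43} and the lower bound \eqref{50}) is what actually pins the gradient away from zero at the contact point. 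Without this device your argument does not close: the step ``invoke classical stability of viscosity solutions for each fixed $t$, and finally send $t\to 0$'' has no mechanism that prevents the degeneracy from returning along the sequence.

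A secondary remark: your final step contradicts $(*)$ at the level of $\uu_j$ for $j$ large via uniform convergence; the paper instead passes the oscillation lower bound to the limit $\uu_\infty$ directly (see \eqref{33}) and contradicts it there. Both work; this is a cosmetic difference. The substantive issue is the missing projection argument above.
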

\begin{proof}
By contradiction, we find sequences of fully nonlinear operators $\{\mf{F}_{\kk}(\cdot)\}$ that are uniformly $(\lambda,\Lambda)$-elliptic, of vectors $\{\bar{\xi}_{\kk}\}\subset \mathbb{R}^{n}$, of nonnegative functions $\{\mf{a}_{\kk}(\cdot)\},\{\mf{b}_{\kk}(\cdot)\}\subset C(B_{1}(0))$, of numbers $\{\mf{C}_{\kk}\}\subset [0,\infty)$ so that $\mf{C}_{\kk}\le \kk^{-1}$  and of maps $\{\uu_{\kk}\}\in C(\Omega)$ that are $\kk^{-1}$-normalized viscosity subsolution to
\begin{flalign}\label{kk1}
\min\left\{\mf{F}_{\kk}(D^{2}\uu_{\kk}),\mf{H}_{q;\kk}(x,D\uu_{\kk};\bar{\xi}_{\kk})\mf{F}_{\kk}(D^{2}\uu_{\kk}),\mf{H}_{s;\kk}(x,D\uu_{\kk};\bar{\xi}_{\kk})\mf{F}_{\kk}(D^{2}\uu_{\kk})\right\}=\mf{C}_{\kk}
\end{flalign}
in $B_{1}(0)$ and $\kk^{-1}$-normalized viscosity supersolution of
\begin{flalign}\label{kk2}
\max\left\{\mf{F}_{\kk}(D^{2}\uu_{\kk}),\mf{H}_{q;\kk}(x,D\uu_{\kk};\bar{\xi}_{\kk})\mf{F}_{\kk}(D^{2}\uu_{\kk}),\mf{H}_{s;\kk}(x,D\uu_{\kk};\bar{\xi}_{\kk})\mf{F}_{\kk}(D^{2}\uu_{\kk})\right\}=-\mf{C}_{\kk}
\end{flalign}
in $B_{1}(0)$ for all $\kk\in \N$, $\uu_{\kk}(0)=0$ and
\begin{flalign}\label{31}
\sup_{\kk\in \N}\nr{\uu_{\kk}}_{L^{\infty}(B_{1}(0))}\le 1,\qquad \sup_{\kk\in \N}\left(\osc_{B_{1}(0)} \uu_{\kk}\right)\le 1, \qquad \osc_{B_{\sigma}(0)}\left(\uu_{\kk}-\xi\cdot x\right)\ge\frac{\sigma}{2}\quad \mbox{for all} \ \ \xi\in \mathbb{R}^{n}.
\end{flalign}
In \eqref{kk1}-\eqref{kk2}, $\mf{H}_{q;\kk}(\cdot)$, $\mf{H}_{s;\kk}(\cdot)$ are defined as in Section \ref{sr}, with $\mf{a}_{\kk}(\cdot)$ and $\mf{b}_{\kk}(\cdot)$ replacing $\mf{a}(\cdot)$ and $\mf{b}(\cdot)$ respectively. As the sequence $\{\mf{F}_{\kk}(\cdot)\}$ is uniformly $(\lambda,\Lambda)$-elliptic, we have that
\begin{eqnarray}\label{37}
\mf{F}_{\kk}(\cdot)\to \mf{F}_{\infty}(\cdot)\ \  \mbox{for some} \ \ \mf{F}_{\infty}\in C(\mathcal{S}(n),\mathbb{R}) \ \  \mbox{uniformly}\ \ (\lambda,\Lambda)\mbox{-elliptic}.
\end{eqnarray}
Moreover, Proposition \ref{phol} applies to renormalized viscosity subsolutions/supersolutions of \eqref{kk1}/\eqref{kk2} as all the estimates made in its proof do not involve in a quantitative way the coefficients. This means that $\{\uu_{\kk}\}\subset C^{0,\beta_{0}}_{\loc}(B_{1}(0))$ for all $\beta_{0}\in (0,1)$, so, recalling also $\eqref{31}_{1,2}$ and Arzela-Ascoli theorem we have that
\begin{eqnarray}\label{32}
\uu_{\kk}\to \uu_{\infty}\quad \mbox{locally uniformly on} \ \ B_{1}(0)
\end{eqnarray}
and, by \eqref{31} and \eqref{32} it holds that $\uu_{\infty}\in C(B_{1}(0))$ with
\begin{flalign}\label{33}
 \nr{\uu_{\infty}}_{L^{\infty}(B_{1}(0))}\le 1\qquad \mbox{and}\qquad \osc_{B_{\sigma}(0)}\left(\uu_{\infty}-\xi\cdot x\right)\ge \frac{\sigma}{2} \ \ \mbox{for all} \ \ \xi\in \mathbb{R}^{n}.
\end{flalign}
We aim to prove that $\uu_{\infty}\in C(B_{1}(0))$ is a viscosity solution of
\begin{eqnarray}\label{34}
\mf{F}_{\infty}(D^{2}\uu_{\infty})=0\qquad \mbox{in} \ \ B_{1}(0).
\end{eqnarray}
Let us show that $\uu_{\infty}$ is a viscosity supersolution of \eqref{34}. Let $\varphi\in C^{2}(B_{1}(0))$ be so that $\uu_{\infty}-\varphi$ admits a local strict minimum at $x_{0}\in B_{1}(0)$. There is no loss of generality in assuming that $\varphi(\cdot)$ is a quadratic polynomial, i.e.: $$\varphi(x):=\frac{1}{2}A(x-x_{0})\cdot(x-x_{0})+b\cdot (x-x_{0})+\uu_{\infty}(x_{0}).$$ By \eqref{32} and standard perturbations arguments \cite[Lemma 5]{ka} we have that there exists a sequence of points $\{x_{\kk}\}\subset B_{1}(0)$ so that $x_{\kk}\to x_{0}$, $\uu_{\kk}-\varphi$ attains a local minimum at $x_{\kk}$ and $D\varphi(x_{\kk})\to b$.
Suppose that 
\begin{eqnarray}\label{35}
\mf{F}_{\infty}(A)<0 \ \Longrightarrow \ \mf{F}_{\kk}(A)<0 \ \ \mbox{for} \ \ \kk\in \N \ \ \mbox{large enough}.
\end{eqnarray}
At this stage, we distinguish two cases according to the behavior of the sequence $\{\bar{\xi}_{\kk}\}$.
\subsubsection*{Case 1: $\{\xi_{\kk}\}$ does not have a convergent subsequence} In this case, up to extract a (non relabelled) subsequence, we have that
\begin{eqnarray}\label{39}
\snr{\xi_{\kk}}\to \infty.
\end{eqnarray}
In the light of \eqref{37}, \eqref{35} and \eqref{39}, up to take $\kk\in \N$ sufficiently large and then relabel, we can assume that
\begin{flalign}\label{38}
\sup_{\kk\in \N}\snr{D\varphi(x_{\kk})}\le 2\left(\snr{b}+1\right),\  \snr{\bar{\xi}_{\kk}}>4\left(\snr{b}+1\right) \ \Longrightarrow \snr{\bar{\xi}_{\kk}+D\varphi(x_{\kk})}\ge 2(\snr{b}+1).
\end{flalign}
Notice that \eqref{35} yields 
\begin{flalign}\label{71}
&\max\left\{\mf{F}_{\kk}(A),\mf{H}_{q;\kk}(x,D\varphi(x_{\kk});\bar{\xi}_{\kk})\mf{F}_{\kk}(A),\mf{H}_{s;\kk}(x,D\varphi(x_{\kk});\bar{\xi}_{\kk})\mf{F}_{\kk}(A)\right\}\nonumber \\
&\qquad \qquad\qquad\qquad= \min\left\{1,\mf{H}_{q;\kk}(x,D\varphi(x_{\kk});\bar{\xi}_{\kk}),\mf{H}_{s;\kk}(x,D\varphi(x_{\kk});\bar{\xi}_{\kk})\right\}\mf{F}_{\kk}(A).
\end{flalign}

As $\uu_{\kk}$ is a $\kk^{-1}$-normalized viscosity supersolution of \eqref{kk2} we get
\begin{eqnarray*}
\mf{F}_{\kk}(A)&\stackrel{\eqref{35}_{2}}{\ge}& -\frac{\mf{C}_{\kk}}{\min\left\{1,\mf{H}_{q;\kk}(x,D\varphi(x_{\kk});\bar{\xi}_{\kk}),\mf{H}_{s;\kk}(x,D\varphi(x_{\kk});\bar{\xi}_{\kk})\right\}}\nonumber \\
&\stackrel{\eqref{38}}{\ge}&-\frac{\kk^{-1}}{\min\{1,2^{p^{+}}(\snr{b}+1)^{p^{+}},2^{p_{-}}(\snr{b}+1)^{p_{-}}\}}.
\end{eqnarray*}
Passing to the limit as $\kk\to \infty$ in the previous display and using \eqref{39} we obtain a contradiction to \eqref{35}.
\subsubsection*{Case 2: $\{\bar{\xi_{\kk}}\}$ admits a convergent subsequence} Up to extract a non relabelled subsequence, we may assume that $\bar{\xi}_{\kk}\to \bar{\xi}_{\infty}$. We first consider the case $\snr{\bar{\xi}_{\infty}+b}>0$, which means that, up to select $\kk\in \N$ large enough and then relabel, 
\begin{eqnarray}\label{40}
\snr{\bar{\xi}_{\kk}+D\varphi(x_{\kk})}\ge \frac{1}{4}\snr{\bar{\xi}_{\infty}+b}>0
\end{eqnarray}
holds true, so from $\eqref{35}_{2}$ and \eqref{kk2} we obtain
\begin{eqnarray*}
\mf{F}_{\kk}(A)&\stackrel{\eqref{71},\eqref{40}}{\ge}& -\frac{\mf{C}_{\kk}}{\min\left\{1,\mf{H}_{q;\kk}(x,4(\bar{\xi}_{\infty}+b);\bar{\xi}_{\infty}),\mf{H}_{s;\kk}(x,\bar{\xi}_{\infty}+b;\bar{\xi}_{\infty})\right\}}\nonumber \\
&\ge& -\frac{4^{p^{+}+p_{-}+1}\kk^{-1}}{\min\{1,\snr{\bar{\xi}_{\infty}+b}^{p^{+}},\snr{\bar{\xi}_{\infty}+b}^{p_{-}}\}}.
\end{eqnarray*}
Sending $\kk\to \infty$ in the above display we contradict \eqref{35}.\\
At this point, we only need to take care of the occurrence $\snr{\bar{\xi}_{\infty}+b}=0$. By \eqref{35} and ellipticity, we deduce that $A$ has at least one positive eigenvalue. Let $\Sigma_{0}$ be the direct sum of all the eigensubspaces corresponding to nonnegative eigenvalues of $A$ and $\Pi_{0}(\cdot)$ be the orthogonal projection over $\Sigma_{0}$. Since $\uu_{\infty}-\varphi$ has a local strict minimum in $x_{0}$, by \eqref{32} the function $$\varphi_{\delta}(x):=\varphi(x)+\delta\snr{\Pi_{0}(x-x_{0})}$$ touches
$\uu_{\infty}$ from below in a point $\hat{x}_{0}$ close to $x_{0}$ for $\delta>0$ sufficiently small. We are then lead to consider two possible occurrences: $\snr{\Pi_{0}(\hat{x}_{0}-x_{0})}=0$ and $\snr{\Pi_{0}(\hat{x}_{0}-x_{0})}>0$. If $\snr{\Pi_{0}(\hat{x}_{0}-x_{0})}=0$, then
\begin{eqnarray*}
\snr{\Pi_{0}(\hat{x}_{0}-x_{0})}=\max_{e\in \mathbb{S}^{n-1}}e\cdot \Pi_{0}(\hat{x}_{0}-x_{0})=\min_{e\in \mathbb{S}^{n-1}}e\cdot \Pi_{0}(\hat{x}_{0}-x_{0}),
\end{eqnarray*}
which means that the map $$\hat{\varphi}_{\delta}(x)=\varphi(x)+\delta e\cdot\Pi_{0}(x-x_{0})$$ touches $\uu_{\infty}$ from below in $\hat{x}_{0}$ for all $e\in \mathbb{S}^{n-1}$. This last fact, \eqref{32} and standard stability results, cf. \cite[Lemma 5]{ka} yield that $\hat{\varphi}_{\delta}(\cdot)$ touches $\uu_{\kk}$ from below in $\hat{x}_{\kk}\to \hat{x}_{0}$. The uniformity prescribed by \eqref{32} guarantees that $\delta$ does not depend on $\kk$. A direct computation shows that $D(e\cdot \Pi_{0}(x-x_{0}))=\Pi_{0}(e)$ and $D^{2}(e\cdot \Pi_{0}(x-x_{0}))=0$. Moreover, it is
\begin{flalign}\label{41}
e\in \Sigma_{0}\cap \mathbb{S}^{n-1}\ \Rightarrow \ \Pi_{0}(e)=e\quad \mbox{and}\quad e\in  \Sigma_{0}^{\perp}\cap \mathbb{S}^{n-1}\ \Rightarrow \ \Pi_{0}(e)=0,
\end{flalign}
where $\Sigma_{0}^{\perp}$ is the subspace orthogonal to $\Sigma_{0}$. We claim that 
\begin{flalign}\label{42}
\mbox{there is} \ \ \hat{e}\in \mathbb{S}^{n-1} \ \ \mbox{so that} \ \ \snr{D\varphi(\hat{x}_{0})+\bar{\xi}_{\infty}+\Pi_{0}(\hat{e})}>0.
\end{flalign}
In fact, if $\snr{D\varphi(\hat{x}_{0})+\bar{\xi}_{\infty}}=0$ we pick any $\hat{e}\in \mathbb{S}^{n-1}\cap \Sigma$ (which exists as $\Sigma\not =\emptyset$ because of the previous considerations on the eigenvalues of $A$) and use $\eqref{41}_{1}$; while if $\snr{D\varphi(\hat{x}_{0})+\bar{\xi}_{\infty}}>0$ and $\Sigma^{\perp}\not =\emptyset$, we fix $\hat{e}\in \Sigma^{\perp}\cap \mathbb{S}^{n-1}$ and exploit $\eqref{41}_{2}$ and if $\snr{D\varphi(\hat{x}_{0})+\bar{\xi}_{\infty}}>0$ and $\Sigma^{\perp} =\emptyset$, i.e. $\Sigma\equiv \mathbb{R}^{n}$ and $\Pi_{0}(\cdot)\equiv \mathbf{I}$, we let $\hat{e}:=\frac{D\varphi(\hat{x}_{0})+\bar{\xi}_{\infty}}{\snr{D\varphi(\hat{x}_{0})+\bar{\xi}_{\infty}}}$, thus
\begin{eqnarray*}
\snr{D\varphi(\hat{x}_{0})+\bar{\xi}_{\infty}+\Pi_{0}(\hat{e})}=\snr{D\varphi(\hat{x}_{0})+\bar{\xi}_{\infty}+\Pi_{0}(\hat{e})}+1>1.
\end{eqnarray*}
Once \eqref{42} has been established, we can take $\kk\in \N$ sufficiently large to assure that 
\begin{eqnarray}\label{43.1}
\snr{D\varphi(\hat{x}_{\kk})+\bar{\xi}_{\kk}+\Pi_{0}(\hat{e})}\ge \frac{1}{4}\snr{D\varphi(\hat{x}_{0})+\bar{\xi}_{\infty}+\Pi_{0}(\hat{e})}\stackrel{\eqref{42}}{>0},
\end{eqnarray}
recall $\eqref{35}_{2}$ and use \eqref{kk2} to conclude with
\begin{eqnarray*}
\mf{F}_{\kk}(A)
&\stackrel{\eqref{71},\eqref{43.1}}{\ge}&-\frac{4^{p^{+}+p_{-}+1}\kk^{-1}}{\min\left\{1,\snr{D\varphi(\hat{x}_{0})+\bar{\xi}_{\infty}+\Pi_{0}(\hat{e})}^{p^{+}},\snr{D\varphi(\hat{x}_{0})+\bar{\xi}_{\infty}+\Pi_{0}(\hat{e})}^{p_{-}}\right\}}.
\end{eqnarray*}
As $\kk\to \infty$ in the above display, we obtain a contradiction to \eqref{35}.\\
On the other hand, if $\snr{\Pi_{0}(\hat{x}_{0}-x_{0})}>0$, we still have that $\varphi_{\delta}(\cdot)$ touches $\uu_{\infty}$ from below in $\hat{x}_{0}$ as above, so, by \eqref{32} and standard stability results \cite[Lemma 5]{ka} we have that $\varphi_{\delta}(\cdot)$ touches from below $\uu_{\kk}$ in $\hat{x}_{\kk}$ for some points $\hat{x}_{\kk}\to \hat{x}_{0}$. We remark that by \eqref{32}, $\delta$ does not depend on $\kk$. Being $\snr{\Pi_{0}(\hat{x}_{0}-x_{0})}>0$, it is also $\snr{\Pi_{0}(\hat{x}_{\kk}-x_{0})}>0$ for $\kk\in \N$ sufficiently large, so the map $x\mapsto \snr{\Pi_{0}(x-x_{0})}$ is smooth and convex in a neighborhood of $\hat{x}_{\kk}$. As $\Pi_{0}(\cdot-x_{0})$ is a projector, there holds
\begin{flalign}\label{43}
\Pi_{0}(x-x_{0})D\Pi_{0}(x-x_{0})=\Pi_{0}(x-x_{0})\quad \mbox{and}\quad D^{2}\snr{\Pi_{0}(x-x_{0})} \ \ \mbox{is nonnegative definite}.
\end{flalign}
Recall that we were assuming that $\snr{\bar{\xi}_{\infty}+b}=0$, so using the very definition of $\Sigma_{0}$ we have
\begin{eqnarray*}
\left| \ \bar{\xi}_{\infty}+D\varphi(\hat{x}_{0})+\delta\frac{\Pi_{0}(\hat{x}_{0}-x_{0})}{\snr{\Pi_{0}(\hat{x}_{0}-x_{0})}}\ \right|^{2}&=&\left| \ A(\hat{x}_{0}-x_{0})+\delta\frac{\Pi_{0}(\hat{x}_{0}-x_{0})}{\snr{\Pi_{0}(\hat{x}_{0}-x_{0})}}\ \right|^{2}\nonumber \\
&=&\snr{A(\hat{x}_{0}-x_{0})}^{2}+\delta^{2}+2\delta A(\hat{x}_{0}-x_{0})\cdot\frac{\Pi_{0}(\hat{x}_{0}-x_{0})}{\snr{\Pi_{0}(\hat{x}_{0}-x_{0})}}\nonumber \\
&=&\snr{A(\hat{x}_{0}-x_{0})}^{2}+\delta^{2}+2\delta A\Pi_{0}(\hat{x}_{0}-x_{0})\cdot\frac{\Pi_{0}(\hat{x}_{0}-x_{0})}{\snr{\Pi_{0}(\hat{x}_{0}-x_{0})}}\nonumber \\
&\ge&\snr{A(\hat{x}_{0}-x_{0})}^{2}+\delta^{2}\ge \delta^{2},
\end{eqnarray*}
thus
\begin{eqnarray*}
\left| \ \bar{\xi}_{\infty}+D\varphi(\hat{x}_{0})+\delta\frac{\Pi_{0}(\hat{x}_{0}-x_{0})}{\snr{\Pi_{0}(\hat{x}_{0}-x_{0})}}\ \right|\ge \delta,
\end{eqnarray*}
therefore, for $\kk\in \N$ large enough we have
\begin{eqnarray}\label{50}
\left| \ \bar{\xi}_{\kk}+D\varphi(\hat{x}_{\kk})+\delta\frac{\Pi_{0}(\hat{x}_{\kk}-x_{0})}{\snr{\Pi_{0}(\hat{x}_{\kk}-x_{0})}}\ \right|\ge \frac{\delta}{4}.
\end{eqnarray}
We can then use \eqref{35}$_{2}$ and that $\uu_{\kk}$ is a $\kk^{-1}$-normalized viscosity supersolution of \eqref{kk2} to get
\begin{eqnarray*}
\mf{F}_{\kk}(A)&\stackrel{\eqref{ell}}{\ge}&\mf{F}_{\kk}(A+D^{2}\snr{\Pi_{0}(\hat{x}_{\kk}-x_{0})})+\lambda\texttt{tr}(D^{2}\snr{\Pi_{0}(\hat{x}_{\kk}-x_{0})})\nonumber \\
&\stackrel{\eqref{43}_{2}}{\ge}&-\frac{\mf{C}_{\kk}}{\min\left\{1,\left|\bar{\xi}_{\kk}+D\varphi(\hat{x}_{\kk})+\frac{\Pi_{0}(\hat{x}_{\kk}-x_{0})}{\snr{\Pi_{0}(\hat{x}_{\kk}-x_{0})}}\right|^{p^{+}},\left|\bar{\xi}_{\kk}+D\varphi(\hat{x}_{\kk})+\frac{\Pi_{0}(\hat{x}_{\kk}-x_{0})}{\snr{\Pi_{0}(\hat{x}_{\kk}-x_{0})}}\right|^{p^{-}}\right\}}\nonumber \\
&\stackrel{\eqref{50}}{\ge}&-\frac{4^{p^{+}+p_{-}+1}\kk^{-1}}{\min\{1,\delta^{p^{+}},\delta^{p_{-}}\}}.
\end{eqnarray*}
Sending $\kk\to \infty$ above we obtain a contradiction to \eqref{35}. \\\\
Combining \emph{Case 1} and \emph{Case 2} we can conclude that $\mf{F}_{\infty}(A)\ge 0$, so $\uu_{\infty}$ is a supersolution of \eqref{34} in $B_{1}(0)$. To show that $\uu_{\infty}$ is also a subsolution to \eqref{34}, we only observe that this is equivalent to prove that $\tilde{\uu}_{\infty}:=-\uu_{\infty}$ is a supersolution of equation
$$
\tilde{\mf{F}}_{\infty}(D^{2}\ti{\uu}_{\infty})=0\qquad \mbox{in} \ \ B_{1}(0),
$$
where we set $\tilde{\mf{F}}_{\infty}(M):=-\mf{F}_{\infty}(-M)$, which is uniformly $(\lambda,\Lambda)$-elliptic in the sense of \eqref{ell}. Hence, we can apply the whole procedure developed above on $\tilde{\uu}_{\infty}$ and conclude that $\uu_{\infty}$ is a viscosity solution of \eqref{34}. Proposition \ref{rhar} then applies and $\ti{\uu}_{\infty}\in C^{1,\alpha}(B_{1/2}(0))$. In particular is valid \eqref{0331}, which contradicts \eqref{33}$_{2}$, and the proof is complete.
\end{proof}

Lemma \ref{har} essentially determines a certain parameter $\varepsilon_{0}\equiv \varepsilon_{0}(\texttt{data})\in (0,1)$ so that it is possible to build a tangential path connecting $\varepsilon_{0}$-normalized viscosity solution of \eqref{d.1} to viscosity solutions of a homogeneous limiting profile for which the Krylov-Safonov regularity theory is available. At this stage, we need to transfer such regularity from the limiting homogeneous problem to viscosity solutions of \eqref{eq}. In this perspective, we establish an oscillation control at discrete scales.
\begin{lemma}\label{disc}
Assume \texttt{set} and let $\varepsilon_{0}\equiv \varepsilon_{0}(\texttt{data})\in (0,1)$ be the smallness parameter determined in Lemma \ref{har}. There are $\sigma\equiv \sigma(n,\lambda,\Lambda)\in (0,1)$ and $\alpha_{0}\equiv \alpha_{0}(n,\lambda,\Lambda,p^{+},p_{-})\in (0,1)$ so that if $\uu\in C(B_{1}(0))$ is a $\varepsilon_{0}$-normalized viscosity solution of equation \eqref{eq}, then for any $\kk\in N$ it is possible to find $\bar{\xi}_{\kk}\in\mathbb{R}^{n}$ so that
\begin{eqnarray}\label{51}
\osc_{B_{\sigma_{\kk}}(0)}\left(\uu-\xi_{\kk}\cdot x\right)\le \sigma^{\kk(1+\alpha_{0})}. 
\end{eqnarray}
\end{lemma}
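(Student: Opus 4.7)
The natural strategy is a discrete induction on $k$, applying the harmonic approximation of Lemma \ref{har} repeatedly at geometrically decreasing scales. I would fix $\sigma\equiv\sigma(n,\lambda,\Lambda)\in(0,1)$ as in \eqref{0331} and choose the H\"older exponent to satisfy
\begin{equation*}
0<\alpha_{0}\le \min\!\left\{\alpha,\,\frac{1}{1+\max\{p^{+},p_{-}\}}\right\},
\end{equation*}
where $\alpha$ is the Krylov--Safonov exponent from Theorem \ref{rhar}. The base $k=0$ is trivial with $\xi_{0}=0$ since by the normalization in Section \ref{sr} we have $\osc_{B_{1}(0)}\uu\le 1=\sigma^{0}$.

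For the inductive step, assume \eqref{51} holds for some $k\ge0$ with a vector $\xi_{k}\in\mathbb{R}^{n}$. I would rescale by setting
\begin{equation*}
v_{k}(x):=\sigma^{-k(1+\alpha_{0})}\bigl[\uu(\sigma^{k} x)-\sigma^{k}\xi_{k}\cdot x-\uu(0)\bigr]\qquad x\in B_{1}(0),
\end{equation*}
so that $v_{k}(0)=0$ and $\osc_{B_{1}(0)}v_{k}\le 1$ by the inductive assumption. A direct computation relating $Dv_{k}$ and $D^{2}v_{k}$ with $D\uu$ and $D^{2}\uu$ at $\sigma^{k}x$, combined with the fact that $\uu$ satisfies the switched inequalities \eqref{dif1}--\eqref{dif2} (with constant $\varepsilon_{0}$ in place of $\nr{f}_{L^{\infty}}$ by the smallness regime), shows that $v_{k}$ is a viscosity subsolution of \eqref{d.1t} and a viscosity supersolution of \eqref{d.2t} with the operator $\mf{F}_{k}(M):=\sigma^{k(1-\alpha_{0})}F(\sigma^{-k(1-\alpha_{0})}M)$, the new shift vector $\bar\xi_{k}:=\sigma^{k\alpha_{0}}\xi_{k}$, and rescaled coefficients $\mf{a}_{k},\mf{b}_{k}$. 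The essential point is that the new right-hand side constant is exactly
\begin{equation*}
\mf{C}_{k}=\sigma^{k[2-(1+\alpha_{0})(1+p)]}\,\varepsilon_{0}
\end{equation*}
for $p\in\{p^{+},p_{-}\}$ on each branch of the min/max, and the choice of $\alpha_{0}$ guarantees the exponent is nonnegative, so that $\mf{C}_{k}\le \varepsilon_{0}$. The monotonicity $q\ge p^{+}$, $s\ge p_{-}$ from \eqref{pqs} similarly ensures that the coefficients $\mf{a}_{k},\mf{b}_{k}$ retain nonnegativity and continuity.

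Having certified that $v_{k}$ is a $\varepsilon_{0}$-normalized viscosity subsolution of \eqref{d.1t} and supersolution of \eqref{d.2t}, I would invoke Lemma \ref{har} to obtain a vector $\xi'\in \mathbb{R}^{n}$ such that
\begin{equation*}
\osc_{B_{\sigma}(0)}\bigl(v_{k}-\xi'\cdot x\bigr)<\frac{\sigma}{2}\le \sigma^{1+\alpha_{0}}
\end{equation*}
(the last inequality holding for $\sigma$ small enough, absorbable into the fixed choice of $\sigma$). Unwinding the scaling yields exactly \eqref{51} at step $k+1$ with $\xi_{k+1}:=\xi_{k}+\sigma^{k\alpha_{0}}\xi'$, closing the induction.

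The main obstacle is the bookkeeping in the inductive step: one must verify carefully that the rescaled map $v_{k}$ genuinely fits into the hypothesis of Lemma \ref{har}, which means checking both the uniform $(\lambda,\Lambda)$-ellipticity of $\mf{F}_{k}$ (immediate from the definition), and crucially that the shift vector $\bar\xi_{k}=\sigma^{k\alpha_{0}}\xi_{k}$ together with the modified coefficients still produce a switched operator of the form in \eqref{d.1t}--\eqref{d.2t}. The only quantitative constraint comes from forcing $\mf{C}_{k}\le\varepsilon_{0}$, which is precisely what fixes $\alpha_{0}$ as a function of $(n,\lambda,\Lambda,p^{+},p_{-})$, explaining the dependency claimed in the statement. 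The coefficients $a,b$, the exponents $q,s$ and the modulus of $f$ play no further quantitative role since Lemma \ref{har} is insensitive to the $L^{\infty}$-size of $\mf a,\mf b$ (cf.~Remark \ref{rema}).
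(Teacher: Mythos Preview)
Your approach is exactly the paper's: induction on $k$, rescale, invoke Lemma \ref{har}, close the loop. Two small bookkeeping slips are worth fixing. First, the rescaled shift vector should be $\sigma^{-k\alpha_{0}}\xi_{k}$, not $\sigma^{k\alpha_{0}}\xi_{k}$ (compute $Dv_{k}$), and the correct right-hand side exponent is $k\bigl(1-\alpha_{0}(1+p)\bigr)$, not $k\bigl(2-(1+\alpha_{0})(1+p)\bigr)$; your stated choice of $\alpha_{0}$ is nonetheless the right one for making this nonnegative. Second, and more importantly, the inequality $\sigma/2\le\sigma^{1+\alpha_{0}}$ is equivalent to $\sigma^{\alpha_{0}}\ge 1/2$, which \emph{fails} as $\sigma\to 0$ for any fixed $\alpha_{0}>0$; it is not ``absorbable into the fixed choice of $\sigma$'' since $\sigma$ is already pinned down by \eqref{delta}. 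The paper handles this by imposing the additional constraint $\alpha_{0}<\log 2/(-\log\sigma)$ on $\alpha_{0}$ (see \eqref{gamma0}--\eqref{54}), which is harmless since $\sigma\equiv\sigma(n,\lambda,\Lambda)$ is fixed and this just makes $\alpha_{0}$ depend on $(n,\lambda,\Lambda)$ as well---consistent with the dependency claimed in the statement.
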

\begin{proof}
Let $\sigma\equiv \sigma(n,\lambda,\Lambda)$ be the one in \eqref{delta} and 
\begin{eqnarray}\label{gamma0}
\alpha_{0}\in \left(0,\min\left\{\alpha,\frac{1}{\max\{p^{+},p_{-}\}+1},\frac{\log(2)}{-\log(\sigma)}\right\}\right),
\end{eqnarray}
where $\alpha\equiv\alpha(n,\lambda,\Lambda)\in (0,1)$ is the H\"older continuity exponent provided by Proposition \ref{rhar}. A direct consequence of the choice made in \eqref{gamma0} is 
\begin{eqnarray}\label{54}
\sigma^{\alpha_{0}}>\frac{1}{2}.
\end{eqnarray}
Now we look back at the construction developed in Section \ref{sr} and fix a scaling parameter $\tau_{0}$ equal to $\varepsilon_{0}^{\frac{1}{2}}$, where $\varepsilon_{0}\equiv \varepsilon_{0}(\texttt{data})$ is the one provided by Lemma \ref{har}. In this way we determine the dependency $\tau_{0}\equiv \tau_{0}(\texttt{data})$ and remove the ambiguity raised in Remark \ref{rema} as now it is $$\nr{\mf{a}}_{L^{\infty}(B_{1}(0))}+\nr{\mf{b}}_{L^{\infty}(B_{1}(0))}\le c(\texttt{data},\nr{a}_{L^{\infty}(\Omega)},\nr{b}_{L^{\infty}(\Omega)},\nr{u}_{L^{\infty}(\Omega)},\nr{f}_{L^{\infty}(\Omega)}).$$ 
Let $\uu\in C(B_{1}(0))$ be a $\varepsilon_{0}$-normalized viscosity solution of equation \eqref{eq} in the sense of Definition \ref{def3} and of Section \ref{sr}, which means that $\uu$ is a $\varepsilon_{0}$-normalized viscosity subsolution/supersolution of \eqref{d.1t}/\eqref{d.2t}. With $\kk\in \N\cup\{0\}$, we define $\sigma_{\kk}:=\sigma^{\kk}$ and start an induction argument to show that \eqref{51} holds for all $\kk\in \N\cup\{0\}$.
\subsubsection*{Basic step - $\kk=0$} By $\eqref{30}_{2}$ we see that \eqref{51} holds with $\bar{\xi}_{0}=0$. In fact it is
\begin{eqnarray*}
\osc_{B_{\sigma_{0}(0)}}\left(\uu-\bar{\xi}_{0}\cdot x\right)=\osc_{B_{1}(0)}\uu\stackrel{\eqref{30}_{2}}{\le}1.
\end{eqnarray*}
\subsubsection*{Induction step} Assume that there exists $\bar{\xi}_{\kk}\in \mathbb{R}^{n}$ satisfying \eqref{51} and define
\begin{eqnarray*}
\uu_{\kk}(x):=\sigma_{\kk}^{-(1+\alpha_{0})}\left[\uu(\sigma_{\kk}x)-\sigma_{\kk}\bar{\xi}_{\kk}\cdot x\right].
\end{eqnarray*}
Recalling Definition \ref{def3}, a straightforward computation shows that $\uu_{\kk}$ is a viscosity subsolution of 
\begin{flalign}\label{62}
\min\left\{\mf{F}_{\kk}(D^{2}\uu_{\kk}),\mf{H}_{q;\kk}(x,D\uu_{\kk};\ti{\xi}_{\kk})\mf{F}_{\kk}(D^{2}\uu_{\kk}),\mf{H}_{s;\kk}(x,D\uu_{\kk};\ti{\xi}_{\kk})\mf{F}_{\kk}(D^{2}\uu_{\kk})\right\}=\mf{C}_{\kk}\quad \mbox{in} \ \ B_{1}(0)
\end{flalign}
and a viscosity supersolution to
\begin{flalign}\label{63}
\max\left\{\mf{F}_{\kk}(D^{2}\uu_{\kk}),\mf{H}_{q;\kk}(x,D\uu_{\kk};\ti{\xi}_{\kk})\mf{F}_{\kk}(D^{2}\uu_{\kk}),\mf{H}_{s;\kk}(x,D\uu_{\kk};\ti{\xi}_{\kk})\mf{F}_{\kk}(D^{2}\uu_{\kk})\right\}=-\mf{C}_{\kk}\quad \mbox{in} \ \ B_{1}(0),
\end{flalign}
where it is
\begin{flalign*}
&\ti{\xi}_{\kk}:=\sigma_{\kk}^{-\alpha_{0}}\bar{\xi}_{\kk},\qquad \mf{a}_{\kk}(x):=\sigma_{\kk}^{\alpha_{0}(q-p^{+})}\mf{a}(\sigma_{\kk}x),\qquad \mf{b}_{\kk}(x):=\sigma_{\kk}^{\alpha_{0}(s-p_{-})}\mf{b}(\sigma_{\kk}x)\nonumber \\
&\mf{F}_{\kk}(M):=\sigma_{\kk}^{1-\alpha_{0}}\mf{F}(\sigma_{\kk}^{\alpha_{0}-1}M),\qquad \mf{C}_{\kk}:=\sigma_{\kk}^{1-\alpha_{0}(\max\{p^{+},p_{-}\}+1)}\mf{C},
\end{flalign*}
$\mf{H}_{q;\kk}(\cdot)$, $\mf{H}_{s;\kk}(\cdot)$ are the same defined in the proof of Lemma \ref{har} and $\mf{C}$ is the constant derived in Section \ref{sr} corresponding to the scaling parameter $\tau_{0}$ fixed before. Notice that by construction $\mf{F}_{\kk}(\cdot)$ satisfies \eqref{assf} uniformly in $\kk$ and because of the choice of $\tau_{0}\equiv \tau_{0}(\texttt{data})$ made above, we have 
\begin{eqnarray*}
\mf{C}_{\kk}\le \sigma_{\kk}^{1-\alpha_{0}(\max\{p^{+},p_{-}\}+1)}\mf{C}\stackrel{\eqref{30}_{2},\eqref{gamma0}}{\le}\varepsilon_{0}.
\end{eqnarray*}
Furthermore, the induction assumption assures that
\begin{eqnarray}\label{61}
\osc_{B_{1}(0)}\uu_{\kk}=\sigma_{\kk}^{-(1+\alpha_{0})}\osc_{B_{\sigma_{\kk}}(0)}\left(\uu-\bar{\xi}_{\kk}\cdot x\right)\stackrel{\eqref{51}}{\le}1,
\end{eqnarray}
and, as $\uu(0)=0$, cf. Section \ref{sr}, it is also $\uu_{\kk}(0)=0$ so by \eqref{61} we have $\nr{\uu_{\kk}}_{L^{\infty}(B_{1}(0))}\le 1$. Therefore we see that $\uu_{\kk}$ is actually a $\varepsilon_{0}$-normalized viscosity subsolution/supersolution of \eqref{62}/\eqref{63}, thus all the assumptions of Lemma \ref{har} are verified, so there is $\ti{\xi}_{\kk+1}\in \mathbb{R}^{n}$ so that
\begin{eqnarray*}
\osc_{B_{\sigma}(0)}\left(\uu_{\kk}-\ti{\xi}_{\kk+1}\cdot x\right)\le \frac{\sigma}{2}.
\end{eqnarray*}
Setting $\bar{\xi}_{\kk+1}:=\bar{\xi}_{\kk}+\sigma_{\kk}^{\alpha_{0}}\ti{\xi}_{\kk+1}$, we can rewrite the content of the previous display as
\begin{eqnarray*}
\sigma_{\kk}^{-(1+\alpha_{0})}\osc_{B_{\sigma_{\kk+1}}(0)}\left(\uu-\bar{\xi}_{\kk+1}\cdot x\right)\le \frac{\sigma}{2} \ \Longrightarrow \ \osc_{B_{\sigma_{\kk+1}}(0)}\left(\uu-\bar{\xi}_{\kk+1}\cdot x\right)\stackrel{\eqref{54}}{\le} \sigma_{\kk+1}^{1+\alpha_{0}}
\end{eqnarray*}
and the proof is complete.
\end{proof}
Now we are ready to prove Theorem \ref{t2}.
\subsection{Proof of Theorem \ref{t2}} Let $u\in C(\Omega)$ be a viscosity solution of equation \eqref{eq}. For the parameter $\varepsilon_{0}\equiv \varepsilon_{0}(\texttt{data})\in (0,1)$ provided by Lemma \ref{har}, we follow the scaling process outlined in Section \ref{sr} to turn $u$ into a $\varepsilon_{0}$-normalized viscosity solution of \eqref{eq}. The choice of $\varepsilon_{0}$ assures that the assumptions of Lemma \ref{disc} are satisfied, so \eqref{51} is available to us. Given any $\rr\in (0,1]$, we can find $\kk\in \N\cup\{0\}$ so that $\sigma^{\kk+1}< \rr \le \sigma^{\kk}$. We then estimate
\begin{eqnarray*}
\osc_{B_{\rr}(0)}\left(\uu-\xi_{\kk}\cdot x\right)\le \osc_{B_{\sigma^{\kk}(0)}}\left(\uu-\xi_{\kk}\cdot x\right)\stackrel{\eqref{51}}{\le}\sigma^{\kk(1+\alpha_{0})}\le \sigma^{-(1+\alpha_{0})}\rr^{1+\alpha_{0}}\le c\rr^{1+\alpha_{0}},
\end{eqnarray*}
with $c\equiv c(n,\lambda,\Lambda,p^{+},p_{-})$,
so $\uu$ is $C^{1,\alpha_{0}}$-regular around zero. By standard translation arguments we can prove the same fact in a neighborhood of any $x_{0}\in B_{1/2}(0)$. In particular, we have
\begin{eqnarray*}
[D\uu]_{0,\alpha_{0};B_{1/2}(0)}\le c(n,\lambda,\Lambda,p^{+},p_{-}).
\end{eqnarray*}
Reversing the scaling procedure in Section \ref{sr} and applying the usual covering argument we obtain \eqref{60}, which implies that $u\in C^{1,\alpha_{0}}_{\loc}(\Omega)$ and the proof is complete.
\appendix
\section{H\"older estimates for multi-phase equations with variable exponents}\label{vem}
Let us derive uniform H\"older estimates for continuous viscosity solutions to fully nonlinear elliptic equations of Multi-Phase type with variable exponents. Let $\mu\in [0,1]$ be any number, set for simplicity
$$
\Omega\times \mathbb{R}^{n}\ni (x,z)\mapsto G_{\mu}(x,z):=\left[\ell_{\mu}(z)^{p(x)}+a(x)\ell_{\mu}(z)^{q(x)}+b(x)\ell_{\mu}(z)^{s(x)}\right].
$$
and consider equation
\begin{flalign}\label{mpe}
G_{\mu}(x,Du)\left(\mu u+F(D^{2}u)\right)=f(x)\qquad \mbox{in} \ \ \Omega,
\end{flalign}
where
\begin{eqnarray}\label{pxqxsx}
0\le p(\cdot)\in C(\Omega),\qquad 0\le q(\cdot)\in C(\Omega),\qquad 0\le s(\cdot)\in C(\Omega)
\end{eqnarray}
and assume also \eqref{assf}, \eqref{ab} and \eqref{f}.
\begin{proposition}\label{prophol}
Under assumptions \eqref{assf}, \eqref{ab}, \eqref{f} and \eqref{pxqxsx}, let $u\in C(\Omega)$ be a viscosity solution to the Multi-Phase fully nonlinear equation with variable exponents \eqref{mpe}. Then $u\in C^{0,\beta_{0}}_{\loc}(\Omega)$ for all $\beta_{0}\in (0,1)$. In particular, if $B_{\rr}(z_{0})\Subset \Omega$ is any ball with radius $\rr\in \left(0,\frac{1}{2}\right)$, it holds that
\begin{eqnarray}\label{a.es}
[u]_{0,\beta_{0};B_{\rr/2}(z_{0})}\le c(n,\lambda,\Lambda,\nr{u}_{L^{\infty}(B_{\rr}(z_{0}))},\nr{f}_{L^{\infty}(B_{\rr}(z_{0}))},\rr,\beta_{0}).
\end{eqnarray}
\end{proposition}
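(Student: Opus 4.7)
The plan is to mimic the Ishii-Lions doubling-of-variables scheme used for Proposition \ref{phol}, with two simplifications. First, since no external gradient vector $\xi$ is present in \eqref{mpe}, I can work throughout with the single concave modulus $\omega(t):=t^{\beta_{0}}$ and bypass the case split of Proposition \ref{phol}. Second, the zero-th order term $\mu u$ is harmless, contributing at most $2\mu\nr{u}_{L^{\infty}}$ to the final estimate. The crux is to carry out the contradiction argument so that its outcome depends only on $n,\lambda,\Lambda,\nr{u}_{L^{\infty}},\nr{f}_{L^{\infty}},\rr,\beta_{0}$, and not on the moduli of continuity or the $L^{\infty}$ norms of $p(\cdot),q(\cdot),s(\cdot),a(\cdot),b(\cdot)$.

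\textbf{Setup.} Fix $\beta_{0}\in(0,1)$, a ball $B_{\rr}(z_{0})\Subset\Omega$ with $\rr\le r_{*}$ for a threshold $r_{*}\equiv r_{*}(\beta_{0})$ to be chosen, and an arbitrary $x_{0}\in B_{\rr/2}(z_{0})$. Set $A_{2}:=64\rr^{-2}\nr{u}_{L^{\infty}(B_{\rr}(z_{0}))}$ and take $A_{1}$ large of the form $A_{1}:=c_{*}\bigl[\nr{f}_{L^{\infty}(B_{\rr}(z_{0}))}/\lambda+(A_{2}+1)\bigr]$ with $c_{*}\equiv c_{*}(n,\lambda,\Lambda,\beta_{0})$ to be fixed below. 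Consider the doubling function $\phi(x,y):=u(x)-u(y)-A_{1}|x-y|^{\beta_{0}}-A_{2}(|x-x_{0}|^{2}+|y-x_{0}|^{2})$ and assume for contradiction that $\max_{\bar{B}_{\rr}(z_{0})\times\bar{B}_{\rr}(z_{0})}\phi>0$. Exactly as in the proof of Proposition \ref{phol}, the choice of $A_{2}$ confines any maximizer $(\bar{x},\bar{y})$ to the interior of $B_{\rr}(z_{0})\times B_{\rr}(z_{0})$, and positivity of the maximum rules out $\bar{x}=\bar{y}$.

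\textbf{Ishii-Lions and Pucci.} Proposition \ref{p1} then produces matrices $X_{\delta},Y_{\delta}\in\mathcal{S}(n)$ together with gradients $\xi_{\bar{x}},\xi_{\bar{y}}$ coming from $\partial_{x}\psi(\bar{x},\bar{y})$ and $-\partial_{y}\psi(\bar{x},\bar{y})$. Concavity of $\omega$ forces $X_{\delta}-Y_{\delta}$ to have a very negative eigenvalue of order $-A_{1}|\omega''(|\bar{x}-\bar{y}|)|$, while its positive eigenvalues are bounded above by $2(2A_{2}+1)$. The bound \eqref{elll} then yields via the Pucci extremal operator $\mathcal{M}^{-}_{\lambda,\Lambda}$ a lower estimate $F(X_{\delta})-F(Y_{\delta})\ge -c(A_{2}+1)[\Lambda(n-1)+\lambda]+4\lambda A_{1}\beta_{0}(1-\beta_{0})|\bar{x}-\bar{y}|^{\beta_{0}-2}$, which grows with $A_{1}$ and dominates any bounded right-hand side.

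\textbf{Closing the argument and main obstacle.} The viscosity inequalities for \eqref{mpe} at the jets $(\xi_{\bar{x}},X_{\delta})$ and $(\xi_{\bar{y}},Y_{\delta})$ read $G_{\mu}(\bar{x},\xi_{\bar{x}})(\mu u(\bar{x})+F(X_{\delta}))\le f(\bar{x})$ and $G_{\mu}(\bar{y},\xi_{\bar{y}})(\mu u(\bar{y})+F(Y_{\delta}))\ge f(\bar{y})$. The pivotal observation, and the main reason the estimate turns out to be uniform in the moduli of continuity, is that $G_{\mu}(x,z)\ge \ell_{\mu}(z)^{p(x)}\ge 1$ whenever $|z|\ge 1$, regardless of the particular value of the nonnegative exponent $p(x)$. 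Choosing $r_{*}$ small ensures $|\xi_{\bar{x}}|,|\xi_{\bar{y}}|\ge A_{1}\beta_{0}(2\rr)^{\beta_{0}-1}-2A_{2}\rr\ge 1$, hence both $G_{\mu}(\bar{x},\xi_{\bar{x}}),G_{\mu}(\bar{y},\xi_{\bar{y}})\ge 1$. Dividing each inequality by its positive $G_{\mu}$ factor, subtracting, and bounding $|f|$ and $\mu|u|$ by their $L^{\infty}$ norms yields $F(X_{\delta})-F(Y_{\delta})\le 2\nr{f}_{L^{\infty}}+2\mu\nr{u}_{L^{\infty}}$, which contradicts the Pucci lower bound once $c_{*}$ is chosen large. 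The delicate point here is precisely this one-sided use of $G_{\mu}\ge 1$: no cancellation between $G_{\mu}(\bar{x},\cdot)$ and $G_{\mu}(\bar{y},\cdot)$, and hence no quantitative input from the continuity of $p,q,s,a,b$, is ever needed, in sharp contrast with the comparison principle of Lemma \ref{comp}. A standard covering argument then delivers \eqref{a.es}.
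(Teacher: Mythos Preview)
Your proposal is correct and follows essentially the same route as the paper's own proof: doubling of variables with modulus $\omega(t)=t^{\beta_{0}}$, Ishii--Lions to extract $X_{\delta},Y_{\delta}$, a Pucci lower bound on $F(X_{\delta})-F(Y_{\delta})$ driven by the negative eigenvalue coming from $\omega''$, and the key one-sided inequality $G_{\mu}(x,z)\ge \ell_{\mu}(z)^{p(x)}\ge 1$ once $\snr{z}\ge 1$, which is exactly what makes the estimate independent of the moduli of continuity of $p,q,s,a,b$. One cosmetic difference: the paper does not introduce a threshold radius $r_{*}(\beta_{0})$; since $\rr<\tfrac12$ already forces $\snr{\bar x-\bar y}<1$ and hence $\snr{\bar x-\bar y}^{\beta_{0}-1}\ge 1$, the bound $\min\{\snr{\xi_{\bar x}},\snr{\xi_{\bar y}}\}\ge 1$ follows directly from the choice of $A_{1}$ large relative to $A_{2}$ (cf.\ \eqref{a5.1}), so your restriction $\rr\le r_{*}$ is unnecessary and the statement holds for all $\rr\in(0,\tfrac12)$ without invoking covering for the intermediate radii.
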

\begin{proof}
Let $u\in C(\Omega)$ be a viscosity solution to equation \eqref{mpe} and $B_{\rr}(z_{0})\Subset \Omega$ be any ball with radius $\rr\in \left(0,\frac{1}{2}\right)$. We prove that there are two constants $A_{2}\equiv A_{2}(\rr,\nr{u}_{L^{\infty}(B_{\rr}(z_{0}))})$ and $A_{1}\equiv A_{1}(n,\lambda,\Lambda,p,\rr,\beta_{0},\nr{u}_{L^{\infty}(B_{\rr}(z_{0}))},\nr{f}_{L^{\infty}(B_{\rr}(z_{0}))})$ so that 
\begin{flalign}\label{a0}
\mathcal{M}(x_{0}):=\sup_{x,y\in B_{\rr}(z_{0})}\left(u(x)-u(y)-A_{1}\snr{x-y}^{\beta_{0}}-A_{2}\left(\snr{x-x_{0}}^{2}+\snr{y-x_{0}}^{2}\right)\right)\le 0
\end{flalign}
holds for all $x_{0}\in B_{\rr/2}(z_{0})$. In \eqref{a0}, $\beta_{0}\in (0,1)$ is any (fixed) number. By contradiction, we assume that
\begin{eqnarray}\label{a2}
\textnormal{there exists}\ x_{0}\in B_{\rr/2}(z_{0}) \ \textnormal{such that} \ \mathcal{M}(x_{0})> 0 \ \textnormal{for all positive} \ A_{1},A_{2},
\end{eqnarray}
define quantities
\begin{flalign}\label{a3}
\left\{
\begin{array}{c}
\displaystyle 
\ A_{1}:=\frac{4}{\beta_{0}(1-\beta_{0})}\left[\frac{\nr{f}_{L^{\infty}(B_{\rr}(z_{0}))}+\nr{u}_{L^{\infty}(B_{\rr}(z_{0}))}}{\lambda}+(2A_{2}+1)\left(\frac{\Lambda(n-1)}{\lambda}+1\right)\right]\\[17pt] \displaystyle
\ A_{2}:=64\rr^{-2}\nr{u}_{L^{\infty}(B_{\rr}(z_{0}))}
\end{array}
\right.
\end{flalign}
and consider the auxiliary functions
\begin{eqnarray*}
\begin{cases}
\ \psi(x,y):=A_{1}\snr{x-y}^{\beta_{0}}+A_{2}\left(\snr{x-x_{0}}^{2}+\snr{y-x_{0}}^{2}\right)\\
\ \phi(x,y):=u(x)-u(y)-\psi(x,y).
\end{cases}
\end{eqnarray*}
If $(\bar{x},\bar{y})\in \bar{B}_{\rr}(z_{0})\times \bar{B}_{\rr}(z_{0})$ is a maximum point of $\phi(\cdot)$, via \eqref{a0} we have $\phi(\bar{x},\bar{y})=\mathcal{M}(x_{0})>0$, so
\begin{eqnarray*}
A_{1}\snr{\bar{x}-\bar{y}}^{\beta_{0}}+A_{2}\left(\snr{\bar{x}-x_{0}}^{2}+\snr{\bar{y}-x_{0}}^{2}\right)\le u(\bar{x})-u(\bar{y})\le 2\nr{u}_{L^{\infty}(B_{\rr}(z_{0}))}.
\end{eqnarray*}
Plugging $\eqref{a3}_{2}$ in the above inequality yields that $\bar{x}$, $\bar{y}$ both belong to the interior of $B_{\rr}(z_{0})$, in fact:
\begin{eqnarray*}
\snr{\bar{x}-z_{0}}\le \snr{\bar{x}-x_{0}}+\snr{x_{0}-z_{0}}\le \frac{3\rr}{4}\qquad \mbox{and}\qquad \snr{\bar{y}-z_{0}}\le \snr{\bar{y}-x_{0}}+\snr{x_{0}-z_{0}}\le \frac{3\rr}{4}.
\end{eqnarray*}
Moreover, $\bar{x}\not =\bar{y}$, otherwise $\mathcal{M}(x_{0})=\phi(\bar{x},\bar{y})=0$ and \eqref{a0} would be verified. This last remark shows that $\psi(\cdot)$ is smooth in a small neighborhood of $(\bar{x},\bar{y})$, therefore we can determine vectors
\begin{flalign*}
&\xi_{\bar{x}}:=\partial_{x}\psi(\bar{x},\bar{y})=A_{1}\beta_{0}\snr{\bar{x}-\bar{y}}^{\beta_{0}-1}\frac{\bar{x}-\bar{y}}{\snr{\bar{x}-\bar{y}}}+2A_{2}(\bar{x}-x_{0}),\\
&\xi_{\bar{y}}:=-\partial_{y}\psi(\bar{x},\bar{y})=A_{1}\beta_{0}\snr{\bar{x}-\bar{y}}^{\beta_{0}-1}\frac{\bar{x}-\bar{y}}{\snr{\bar{x}-\bar{y}}}-2A_{2}(\bar{y}-x_{0}).
\end{flalign*}
To summarize, we have that $\phi(\cdot)$ attains its maximum in $(\bar{x},\bar{y})$ inside $B_{\rr}(z_{0})\times B_{\rr}(z_{0})$ and $\phi(\cdot)$ is smooth around $(\bar{x},\bar{y})$, thus Proposition \ref{p1} applies: for any $\iota>0$ we can find a threshold $\hat{\delta}=\hat{\delta}(\iota,\nr{D^{2}\psi})$ such that for all $\delta \in (0,\hat{\delta})$ the couple $(\xi_{\bar{x}},X_{\delta})$ is a limiting subjet of $u$ at $\bar{x}$ and the couple $(\xi_{\bar{y}},Y_{\delta})$ is a limiting superjet of $u$ at $\bar{y}$ and the matrix inequality
\begin{flalign}\label{m2}
\begin{bmatrix}
X_{\delta} & 0 \\ 0 & -Y_{\delta} 
\end{bmatrix}\le \begin{bmatrix}
Z & -Z \\ -Z & Z 
\end{bmatrix}+(2A_{2}+\delta)\mathbf{I}
\end{flalign}
holds, where we set
\begin{flalign*}
Z:=&\left.A_{1}D^{2}(\snr{x-y}^{\beta_{0}})\right|_{(\bar{x},\bar{y})}\nonumber \\
=&A_{1}\left[\frac{\beta_{0}\snr{\bar{x}-\bar{y}}^{\beta_{0}-1}}{\snr{\bar{x}-\bar{y}}}\mathbf{I}+\left(\beta_{0}(1-\beta_{0})\snr{\bar{x}-\bar{y}}^{\beta_{0}-2}-\frac{\beta_{0}\snr{\bar{x}-\bar{y}}^{\beta_{0}-1}}{\snr{\bar{x}-\bar{y}}}\right)\frac{(\bar{x}-\bar{y})\otimes (\bar{x}-\bar{y})}{\snr{\bar{x}-\bar{y}}^{2}}\right].
\end{flalign*}
We fix $\delta\equiv \min\left\{1,\frac{\hat{\delta}}{4}\right\}$ and apply \eqref{m2} to vectors of the form $(z,z)\in \mathbb{R}^{2n}$, to obtain  
\begin{flalign*}
\langle(X_{\delta}-Y_{\delta})z,z \rangle\le (4A_{2}+2)\snr{z}^{2}.
\end{flalign*}
This means that 
\begin{flalign}\label{alle}
\mbox{all the eigenvalues of} \ \ X_{\delta}-Y_{\delta} \ \ \mbox{are less than or equal to} \ \ 2(2A_{2}+1).
\end{flalign}
In particular, applying \eqref{m2} to the vector $\bar{z}:=\left(\frac{\bar{x}-\bar{y}}{\snr{\bar{x}-\bar{y}}},\frac{\bar{y}-\bar{x}}{\snr{\bar{x}-\bar{y}}}\right)$, we get
\begin{flalign*}
\left \langle (X_{\delta}-Y_{\delta})\frac{\bar{x}-\bar{y}}{\snr{\bar{x}-\bar{y}}},\right.&\left.\frac{\bar{x}-\bar{y}}{\snr{\bar{x}-\bar{y}}} \right \rangle\le 2(2A_{2}+1)-4\beta_{0}(1-\beta_{0})A_{1}\snr{\bar{x}-\bar{y}}^{\beta_{0}-2}.
\end{flalign*}
This yields in particular that
\begin{flalign}\label{eneg}
\mbox{at least one eigenvalue of} \ \ X_{\delta}-Y_{\delta} \ \ \mbox{is less than} \ \ 2(2A_{2}+1)-4A_{1}\beta_{0}(1-\beta_{0})\snr{\bar{x}-\bar{y}}^{\beta_{0}-2}.
\end{flalign}
Expanding the expression of $\omega(\cdot)$ in \eqref{eneg} we get
\begin{eqnarray*}
2(2A_{2}+1)+4A_{1}\beta_{0}(\beta_{0}-1)\snr{\bar{x}-\bar{y}}^{\beta_{0}-2}\le 2(2A_{2}+1)-4A_{1}\beta_{0}(1-\beta_{0})\stackrel{\eqref{a3}_{1}}{<}0.
\end{eqnarray*}
where we also used that $\snr{\bar{x}-\bar{y}}\le 1$. This means that at least one eigenvalue of $X_{\delta}-Y_{\delta}$ is negative, thus combining \eqref{m-}$_{2}$, \eqref{alle} and \eqref{eneg} we obtain
\begin{eqnarray}\label{a4}
\mathcal{M}_{\lambda,\Lambda}^{-}(X_{\delta}-Y_{\delta})\ge -2(2A_{2}+1)\left[\Lambda(n-1)+\lambda\right]+4\lambda A_{1}\beta_{0}(1-\beta_{0}).
\end{eqnarray}
With $\xi_{\bar{x}}$, $\xi_{\bar{y}}$ computed before, we recover the viscosity inequalities
\begin{flalign}\label{a5}
\begin{cases}
\ G_{\mu}(\bar{x},\xi_{\bar{x}})\left(\mu u(\bar{x})+F(X_{\delta})\right)\le f(\bar{x})\\
\ G_{\mu}(\bar{y},\xi_{\bar{y}})\left(\mu u(\bar{y})+F(Y_{\delta})\right)\ge f(\bar{y}).
\end{cases}
\end{flalign}
Moreover, a quick computation shows that
\begin{eqnarray}\label{a5.1}
\min\left\{\ell_{\mu}(\xi_{\bar{x}}),\ell_{\mu}(\xi_{\bar{y}})\right\}\ge\min\left\{\snr{\xi_{\bar{x}}},\snr{\xi_{\bar{y}}}\right\}\ge\sqrt{A_{1}\beta_{0}\left(A_{1}\beta_{0}-2A_{2}\right)}\stackrel{\eqref{a3}_{1}}{\ge}1
\end{eqnarray}
and, via ellipticity,
\begin{eqnarray}\label{a7}
F(X_{\delta})\stackrel{\eqref{elll}}{\ge} F(Y_{\delta})+\mathcal{M}^{-}_{\lambda,\Lambda}(X_{\delta}-Y_{\delta}).
\end{eqnarray}
Merging all the previous inequalities, we obtain
\begin{eqnarray*}
\frac{f(\bar{x})}{G_{\mu}(\bar{x},\xi_{\bar{x}})}&\stackrel{\eqref{a5}_{1}}{\ge}&\mu u(\bar{x})+F(X_{\delta})\nonumber \\
&\stackrel{\eqref{a7}}{\ge}&\mu u(\bar{x})+F(Y_{\delta})+\mathcal{M}^{-}_{\lambda,\Lambda}(X_{\delta}-Y_{\delta})\nonumber \\
&\stackrel{\eqref{a4},\eqref{a5}_{2}}{\ge}&\mu(u(\bar{x})-u(\bar{y}))\nonumber \\
&-&2(2A_{2}+1)\left[\Lambda(n-1)+\lambda\right]+4\lambda A_{1}\beta_{0}(1-\beta_{0})+\frac{f(\bar{y})}{G_{\mu}(\bar{y},\xi_{\bar{y}})},
\end{eqnarray*}
so with \eqref{a5.1} we can complete the estimate in the above display as follows:
\begin{eqnarray*}
2\left(\nr{f}_{L^{\infty}(B_{\rr}(z_{0}))}+\nr{u}_{L^{\infty}(B_{\rr}(z_{0}))}\right)&+&2(2A_{2}+1)\left[\Lambda(n-1)+\lambda\right]\nonumber \\
&\ge&\mu(u(\bar{y})-u(\bar{x}))+\frac{f(\bar{x})}{G_{\mu}(\bar{x},\xi_{\bar{x}})}-\frac{f(\bar{y})}{G_{\mu}(\bar{y},\xi_{\bar{y}})}\nonumber \\
&+&2(2A_{2}+1)\left[\Lambda(n-1)+\lambda\right]\nonumber \\
&\ge&4\lambda A_{1}\beta_{0}(1-\beta_{0}),
\end{eqnarray*}
which contradicts the position in $\eqref{a3}_{1}$. This means that there are two positive constants $A_{1}$, $A_{2}$ with the dependencies outlined before so that for all $x_{0}\in B_{\rr/2}(z_{0})$, inequality \eqref{a0} is verified, which in particular yields that $u\in C^{0,\beta_{0}}(B_{\rr/2}(z_{0}))$ for all $\beta_{0}\in (0,1)$. The arbitrariety of $B_{\rr}(z_{0})$ and a standard covering argument render that $u\in C^{0,\beta_{0}}_{\loc}(\Omega)$ for all $\beta_{0}\in (0,1)$ and the proof is complete. 
\end{proof}
\begin{remark}\label{r1}
\emph{Notice that the constant appearing in \eqref{a.es} does not depend on $\mu\in [0,1]$ nor on the moduli of continuity of $a(\cdot)$, $b(\cdot)$, $p(\cdot)$, $q(\cdot)$, $s(\cdot)$.}
\end{remark}

\end{document}